\documentclass[10pt,a4paper]{amsart}
\usepackage{amssymb}
\usepackage{amsmath,amsfonts,amsthm}
\usepackage{amstext,latexsym,bm}
\usepackage{graphicx}

\usepackage{xcolor}
 
\usepackage{amscd}
 
\usepackage{amssymb}
\usepackage{amstext}
\usepackage{mathtools}
\usepackage{bbm}

\newtheorem{theorem}{Theorem}
\newtheorem{corollary}[theorem]{Corollary}
\newtheorem{lemma}[theorem]{Lemma}
\newtheorem{proposition}[theorem]{Proposition}
\theoremstyle{definition}

\newtheorem{remark}[theorem]{Remark}
\newtheorem{definition}[theorem]{Definition}
\newtheorem{example}[theorem]{Example}

\renewcommand{\P}{\mathbb{P}}
\renewcommand{\emptyset}{\varnothing}
\renewcommand{\d}{\;\mathrm{d}}
\renewcommand{\theta}{\vartheta}
\newcommand{\C}{\mathbb{C}}
\newcommand{\R}{\mathbb{R}}
\newcommand{\N}{\mathbb{N}}
\newcommand{\M}{\mathcal{M}}

\newcommand{\E}{\mathbb{E}}
\newcommand{\e}{\mathrm{e}}
\newcommand{\F}{\vec{F}}
\newcommand{\X}{\mathbb{X}}
\newcommand{\1}{\mathbbm{1}}
\renewcommand{\hat}{\widehat}
\renewcommand{\tilde}{\widetilde}
\DeclareMathOperator*{\slim}{\star-lim}
\DeclareMathOperator{\Cov}{Cov}
\DeclareMathOperator*{\argmax}{arg\;max}
\numberwithin{equation}{section}
\numberwithin{theorem}{section}

\begin{document}

\title[Parametrized Families of Gibbs Measures -- Statistic Inference]{Parametrized Families of Gibbs Measures and their Statistical Inference}
\author{M. Denker} \email{manfred.denker@mathematik.uni-goettingen.de}
\address{University of G\"ottingen
Institut f\"ur Mathematische Stochastik, Goldschmidtstr. 7, 37077 G\"ottingen, Germany}
\author{M. Kesseb\"ohmer}\email{mhk@uni-bremen.de}
\address{University of Bremen, Institute of Dynamical Systems, FB 3 -- Mathematics and Computer Science, Bibliothekstr. 5
28359 Bremen, Germany}
\author{A. O.  Lopes} \email{arturoscar.lopes@gmail.com}
\author{S. R. C.  Lopes} \email{silviarc.lopes@gmail.com}
\address{Universidade Federal do Rio Grande do Sul Instituto de Matem\'{a}tica e Estat\'{\i}stica Av. Bento Gon\c{c}alves, 9500 91509-900 Porto Alegre, RS-Brazil}

\thanks{MK was supported by the Collaborative Research Center TRR 181 {\em ``Energy Transfers in Atmosphere and Ocean''} by the  German Research Foundation (DFG)}
\subjclass[2010]{62F02, 62F12, 62E20, 37A50, 37A10}
\keywords{Gibbs measure,  dynamical time series, maximum likelihood estimator, parametric hypothesis testing}
\date{08-02-2024}

\begin{abstract}  For  H\"older continuous functions $f_i$, $i=0,\ldots ,d$, on a subshift of finite type and  $\Theta\subset \mathbb \R^d$ we consider a parametrized family of potentials $\{F_\theta= f_0+\sum_{i=1}^d \theta_i f_i : \theta\in \Theta\}$. We show that the maximum likelihood estimator of $\theta$ for a family of Gibbs measures with potentials $F_\theta$  is consistent and determine its asymptotic distribution under the associated shift-invariant distribution. A second part discusses applications; from confidence intervals through testing problems to connections to Bernoulli distributions and stationary Markov chains.  
\end{abstract}
\maketitle

\section{Introduction}\label{sec:1}

Statistical inference for invariant distributions of stationary random sequences is traditionally investigated in  time series models or stationary Markov chains (see e.\,g.\@ \cite{Bro}, \cite{MMP}). The study of stationary sequences when such a modelling is  not possible
goes back to the 1970es when several authors studied nonparametric statistical inference for weakly dependent processes.  Main results in this direction are centered around extensions of asymptotic distribution results for classical nonparametric statistics in the independent case. To our knowledge the first  attempt to apply these results to genuine dynamics seems to be \cite{D1}.  First rigorous results --for dynamical systems as understood here-- are found in \cite{DK1} and \cite{DK2}, followed by \cite{Chu}. Since then the topic gained more interest, but was mostly confined to time series and their  analysis and prediction and to  statistical inference for parameters in stochastic differential equations, as a recent example we mention \cite{N} on likelihood analysis. 
Chang and Tong in \cite{ChanTong} quote: {\em ``H. Poincar\'e   recognized  that initial-value sensitivity is a fundamental source of randomness''}. Certainly, this is a point of view one can agree with. However, there are arguments as well in science that this phenomenon  should more accurately be called non-predictable. The difference seems to be merely philosophically and of no importance  in the axiomatic Kolmogorov model for probability. On the other hand, observing the recent discussion of EPR experiments in physics (\cite{HP}), there may be much more behind such an artificial distinction. The book \cite{ChanTong} provides a first attempt to describe statistical phenomena from a dynamic viewpoint. 

More recently, we mention, among others, the work in \cite{MMP} to transfer the classical parametric statistical analysis to the modern classical dynamical setup (for example, as presented in \cite{KH} and in this paper).  

Gibbs distributions play a fundamental role in statistical physics, dynamical systems and  ergodic theory. They build the core of the equilibrium theory and are studied as a separate topic, called thermodynamic formalism. Its fundamental mathematical results  go back to Ruelle and Bowen around 1970, though earlier fragments of the theory can be traced back up to the 1920.
They are defined as stationary probabilities which arise as a limit measure of push-forward distributions by H\"older continuous potentials. 

The typical examples of such dynamics are subshifts of finite type (cf. \cite{DGS}) and its factors like Anosov diffeomorphisms, rational functions and Markov maps on the unit interval, so name the most prominent applications. Most of these examples can be described as open and expanding maps (Ruelle expanding as they are termed in \cite{D3}). The probabilistic properties  of Gibbs distributions have been extensively studied since 1975 (\cite{Bo}) where it is shown that partial sums of H\"older-continuous functions satisfy a central limit theorem. Since stationary Gibbs measures are 
continued fraction mixing probabilities (\cite{ADU}), these partial sums are well approximable functions of $\psi$-mixing processes (\cite{Br}), thus many other probabilistic properties are immediate (cf. \cite{PS}, \cite{D2},\cite{DK01}, \cite{L3}). 

Much fewer problems have been considered in connection with statistics. While asymptotic distributional results have immediate  applications to nonparameteric statistics to estimate expectations (this goes back to the 70es) and multiple integrals
(\cite{DG} for a latest example), much less is known in the parametric and semiparametric cases.  The paper by Ji (\cite{Chu}) considers the family of Gibbs measures parametrized by their H\"older-continuous potentials $f$. Note that two Gibbs measures $\mu_f$ and $\mu_g$ are equal if and only if $f$ and $g$ are cohomologous. Thus the parameter space is infinite dimensional and can be identified with the space of H\"older-continuous functions modulo this equivalence relation of cohomology. The purpose in Ji's work is to estimate functionals of the Gibbs measures. It is shown that the maximum likelihood estimator based on the initial string of  the coordinate process estimates the true distribution and thus also the functional. It is, moreover, asymptotically efficient (like in the Cram\'er--Rao bound). This work is not in the main stream of classical parametric inference since the parameter space is not finite dimensional. 
 It should be noted that the same approach as in \cite{Chu} works in the present model of restricted classes of Gibbs measures. In this case one  should work with the complete knowledge of the   associated invariant distributions, instead of eigenmeasures as we do here.
 
A different approach to  treat parametric families of Gibbs measures can be found in \cite{MMN}, \cite{LoMe} and \cite{LV}. Their approach is a Bayesian one, starting with a parametrized family of  potentials, where the parameter space is compact and metric. They estimate $\theta$ through the exponential growth rate of the partition function. No maximum likelihood estimator
appears here, and in \cite{MMP} such estimators are only mentioned in abstract form.   Another interesting approach to statistical inference appeared in \cite{AK} where permutation statistical methods and entropy maximizing arguments are used to estimate invariant distributions. 

 This note studies the maximum likelihood estimator in the framework of parametric statistics.
This theory begins with the famous Neyman--Pearson lemma  and can be immediately applied to two Gibbs measures $\mu_0$ and $\mu_1$. It has been hardly observed in the literature for stationary measures that statistical analysis depends on the time series which has been observed and the  statistical problem which can be phrases with that knowledge. In order to illustrate that point, assume that a finite time series $\X_{n}\coloneqq (X_0,\ldots,X_{n-1})$ is observed which are the first $n$ steps of an infinite typical time series of the underlying Gibbs distribution.  
Therefore $\mu_0$ against $\mu_1$ cannot be tested based on the observed finite time series, one needs to redefine the test problem by testing the conditional distribution of $\mu_0$ against the conditional distribution of $\mu_1$, the condition is given by the $\sigma$-field generated by $\X_{n}$. Then the Neyman--Pearson Lemma is directly applicable and best tests can be explicitly constructed. Details are contained in \cite{DLL}.

Parametric maximum likelihood method needs a   family of Gibbs measures para\-me\-trized by a finite dimensional set. Due to its applications in dynamics  as driven by their potential  it is therefore natural to parametrize the underlying potential. Hence we take the family of Gibbs measures $\mu_\theta$ for $\theta\in \R ^{d}$ which have potentials
\[ F_\theta\coloneqq f_0+ f_{\theta} \;\;\mbox{ with }\;\; f_{\theta}\coloneqq \sum_{i=1}^d \theta_i f_i,
\]
where we slightly misuse the notation in the definition of $f_{\theta}$. 
If all the real-valued H\"older continuous functions $f_i$ are known, we have a parametric statistical problem, if some $f_i$ are not known the problem becomes semiparameteric. This paper concerns the parametric setup.

In the following, we will use  $\nu_{\theta}$ to denote the Gibbs measure, which is the unique normalized fixed point of the Perron-Frobenius operator with respect to the potential $F_{\theta}$, and  we will use $\mu_{\theta}$   to denote the corresponding unique shift-invariant Gibbs measure (the details of this will be provided at a later point). Like in the case of the Neyman-Pearson test problem 
we need to restrict the statistical problem to the situation when only    $\X_{n}$ 
is observable, thus redefining the statistical problems to that of the conditional distributions. 

 Statistical inference is concerned with two problems: Based on the observation $\X_{n}$ 
\begin{enumerate}
\item estimate the true parameter $\theta\in \Theta$  and
\item test the hypothesis $H_0\cong \theta \in \Theta_0\subset \Theta$ against $H_1\cong \theta\not\in \Theta_0$.
\end{enumerate}
In this note we address both questions keeping in mind that all statistics stay calculable from the initial data (here the potential). 
As a statistics we shall use maximizing principles, either a maximum likelihood of densities or  maximizing the basic thermodynamic relation for equilibria.
For the calculation of the maximum likelihood estimator $\hat \theta^n$  one needs to calculate the values $\nu_\theta(C)$ for any $n$-cylinder $C$. This can be done from the knowledge of the basic potentials $f_i$ by  
\[\nu_\theta(C)=\lim_{m\to\infty} \int \tilde{\mathcal  L}^{m}_\theta(\1_C)\d\mu \Huge{ / }\int \tilde{\mathcal L}^{m}_\theta(\1) \d\mu\]
for any initial probability $\mu$, where $\tilde{\mathcal  L}_{\theta}$ denotes the {\em transfer operator}
\[ \tilde {\mathcal L}_{\theta}f(x)= \sum_{\sigma(y)=x} f(y)\e^{F_{\theta}(y)},\]
$\sigma$ the shift transformation and $\1_{A}$ the indicator function on the subset $A$ and $\1$ the constant $1$-function.  It is known that this converges exponentially fast (e.g.\ \cite{BS}, Theorem 1 for a practicable algorithm in case of shift-invariant Gibbs measures; the non-invariant and non-normalized case is similar, since its eigenmeasure is absolutely continuous with respect to the shift-invariant Gibbs measure with bounded logarithm of its derivative).

Alternatively, instead of maximizing the probability densities one can use the estimator $\tilde \theta$ given by
\[ \max_{t} nP(F_t)- S_nF_t(x)= nP(F_{\tilde \theta})- S_nF_{\tilde \theta}(x)\]
This is well defined if the covariance matrix is positive definite. In this case one needs to calculate the maximal eigenvalue of the transfer operator which is much harder because the operator acts on an infinite-dimensional Banach space.

We shall show that both estimators will be asymptotically equivalent.

First  of all, one would like to use the maximum likelihood  estimator of $\theta$ based on  observations $X_0,\ldots,X_{n-1}$ which have a finite state space $E$ for shift-invariant Gibbs distributions.  For given $(c_i)\in E^{\{0,\ldots, n-1\}}$ this estimator is determined by maximizing
\[ \mu_\theta(X_0=c_0,\ldots,X_{n-1}=c_{n-1}), \qquad \] 
 the densities of the conditional distributions with respect to the dominating unit mass distribution on all $n$-cylinders. 
The first problem we are facing here is to write the densities explicitly as functions of $\theta$ through their potential functions. This is not possible for invariant Gibbs measures where - in general - the potential is not explicitly calculable, only approximatively. In general, the invariant Gibbs measures and their densities are numerically  calculable, but it is hard to find a suitable form for its derivative and to deduce the distribution of the corresponding maximum likelihood estimator. The case of independent, finite state processes   is an exception.
Instead of using the invariant Gibbs measure, 
 there is the equivalent probability $\nu_\theta$ with bounded Radon-Nikodym derivative which allows an explicit form of its density. Therefore we study the maximum likelihood estimator for the family $\nu_\theta$. Here, it turns out that one cannot solve the equations 
\[\frac {\partial}{\partial\theta_i}\nu_\theta([X_0,\ldots,X_{n-1}])=0\;\;\;\;(1\le i\le d)\]  
explicitly (like in the classical cases when deriving maximum likelihood estimators). Instead, we are using Taylor expansion for the first order derivatives and show the existence of a maximum using thermodynamics and thus show that the first order derivatives at the maximum vanish.

One more comment needs to be made here.  The family of Gibbs measures considered here depend on the parameter $\theta$ and a finite number of basic potentials $f_i$. It turns out that  (in many cases) there is a bijection between the parameter space and the Gibbs family through their integrals $\int f_i \d\mu_\theta$. These integrals can easily be estimated like in \cite{Chu}, but this does not allow statistical inference on the parameter $\theta$ unless one knows the bijection. In Section 6, Example \ref{ex:6.4}, we show that dependence using a simple example. It then becomes clear  that the plug-in  estimator derived from that bijection does not simplify the problem essentially.

It turns out that under canonical assumptions the estimator is consistent and has probabilistic properties governed by the distribution $\mu_{\theta}$. In particular, we determine the asymptotic distribution of $\hat \theta^n$, the maximum likelihood estimator based on $\X_n$.  It then follows that confidence intervals for $\theta$ can be constructed, as well as test problems about the parameter $\theta$ can be solved.

We also show that the estimator is asymptotically efficient in some weaker sense: The variance of $\hat \theta^n$ decreases of the order $1/n$, the same order as given by the optimal rate for the best unbiased estimator given by the Cram\'er--Rao bound. A particular parametrized family is given by potential functions $f_0=0$ and $f_i=\1 _{[i]}$ for $i\in E$ and the parameter space 
\[\Theta\coloneqq\left\{\theta: 0\le \theta_i\le 1;\ \sum_{i\in E} \theta_i=1\right\}.\]
 In this case the measures $\nu_\theta$ and $\mu_\theta$ are equal, and $\mu_\theta$ is the Bernoulli measure on $E^\mathbb N$ with $\mu_\theta([i])=\theta_i$ for $i\in E$. In this case the maximum likelihood estimator for $\theta$ is a {\em uniformly unbiased, minimum variance estimator (UMV) estimator}. Since the parameter arising from the parametrization in thermodynamics is different, but a function $h$ of the above parametrization, we obtain a maximum likelihood estimator which differs by a random factor form the estimator obtained  by  plugging in the UMV estimator into $h$.

 The paper is organized and written to make it more comfortable to read for researchers in dynamical system theory, statistics and applied sciences. Thus it includes a somewhat lengthy discussion of background material in thermodynamics and statistics.  It discusses applications in great detail, but cannot be complete in that respect. It leaves many directions of further research. 
 
 The organization is as follows. Section \ref{sec:2} contains basic definitions and the main theorem. Note that the definition of the maximum likelihood estimator is slightly different from the classical one, because one needs to avoid that the estimator escapes the parameter space. We also connect the existence of the estimators to the concavity of the pressure function. Section \ref{sec:3} contains background material for the proof of the main theorem in Section \ref{sec:4}. In order to make the paper better accessible to non-specialists in thermodynamics we prove/reprove analytic properties of Gibbs measures in Section \ref{sec:3.1} (cf. \cite{Chu, Lal1, Lal2, PP}). Moreover, we recall non-standard results to deal with the moments and distributions of covariance matrices arising for stationary measures. Note that covariance $\frac 1n \int S_nfS_ng \d\mu$ converge to the covariance of the limiting normal distribution if both functions $f$ and $g$ are H\"older continuous and centered (under an invariant Gibbs measure) while $\frac 1n S_nfS_ng$ converges to a weighted $\chi^2$-distribution if the functions are centered (Theorem \ref{theo:3.10}, see \cite{DG}, a similar result appeared in \cite{LN}). This distribution appears in our calculation of the maximum likelihood estimator as some marginal distribution. In Section \ref{sec:5} we apply the asymptotic properties of the maximum likelihood estimator to construct maximum likelihood tests for simple hypotheses and tests for detecting some influence of a particular potential $f_k$. Finally, in Section \ref{sec:6} we provide several examples to illustrate the results and, in particular construct confidence intervals and compare the result with the classical estimation when the random sequence is independent or is a Markov chain. The final section discusses the equivalence of the two estimators, the maximum likelihood estimator and the maximum potential estimator. It also deals with an extension of the theory for $C^3$-parametrization (instead of a linear one).

\section{Main results}\label{sec:2}
 
  \subsection{Maximum likelihood estimator}\label{sec:2.1}
  
We begin describing the statistical model, which consists of a $d$-dimensional parametrized family of Gibbs measures 
$(\nu_\theta)_{ \theta\in \Theta}$, $\Theta\subset \mathbb R^d$, defined on a toplogically mixing subshift of finite type 
$\Omega\subset \{1,2,..,a\}^{\mathbb N_0}$, $a\ge 2$ and $\mathbb N_0=\{0,1,2,\ldots\}$, with shift transformation $\sigma:\Omega\to\Omega$ (see \cite{DGS} for a definition) and 
  observations $\mathbb X=(X_n)_{n\in \mathbb N_0}$ defined on $\Omega$ where $X_n$ denotes the projection onto the $n$-th coordinate, thus has the state space $\{1,\ldots,a\}$. The Gibbs measures $\nu_\theta$ are defined by potentials $F_\theta$ as given above, where $f_i$, $0\le i\le d$ are H\"older continuous functions on $\Omega$. The space of all shift-invariant probability measures on $\Omega$ will be denoted by  $\mathcal M(\Omega)$ and  $\mu_{\theta}\in\mathcal M(\Omega) $ denotes the associated normalized shift-invariant version of $\nu_\theta$ (see  Section \ref{sec:3.1}).  We set $[x]\coloneqq [x_0,\ldots,x_{n-1}]\coloneqq \{\omega \in \Omega: \omega _i=x_i, 0\le i<n\}$, which we call a {\em cylinder of length} $n$ (or $n$-cylinder for short) and the set of {\em admissible words of length} $n$ by $\Omega^{n} \coloneqq \{ x=(x_0,\ldots,x_{n-1})\in  \{1,2,..,a\}^n : [x]\neq \emptyset \}$. For  $\theta\in \Theta$, we can   define the marginal distributions of $(X_k)_{k\in \mathbb N_0}$  via
  \[ 
\P(X_0=x_0,\ldots,X_n=x_n)= \nu_\theta([x_0,\ldots,x_n]),\;\;n\in \mathbb{N}_{0}.
\]
 (We use here $\P$ to denote an `unspecified' probability in accordance to common practice in stochastics.) If this holds for some $\theta$ we say that $\mathbb X$ is a random sequence drawn from the distribution $\nu_\theta$. For a function $f:\Omega\to \C $ we let $S_{n}f\coloneqq \sum _{k=0}^{n-1}f\circ \sigma^{k}$, and for vector-valued $G\coloneqq (g_{1},\ldots,g_{d}):\Omega\to \C^{d} $ this defintion caries over to $S_{n}G\coloneqq (S_{n}g_{1},\ldots,S_{n}g_{d})$ by addition in the vector space; similarly, the integral $\int G \d\mu\in \C ^{d}$ is defined component-wise.  With $\vert \,\cdot\,\vert_{p}$ we denote the $p$-norm in $\C^{d}$, $p\in [1,\infty]$ and particularly, for the euclidean norm $\vert \,\cdot\,\vert\coloneqq\vert \,\cdot\,\vert_{2}$. 
 For a cylinder $[x]$ of length $n$ and $G$ an $\R^{d}$-valued function, we set 
 \[S_{[x]}G\coloneqq \bigg(\sup_{\omega \in [x]}S_{n}g_{1}(\omega),\ldots,\sup_{\omega \in [x]}S_{n}g_{d}(\omega)\bigg).\]

\begin{definition}\label{def:2.1} Let $\{\nu_\theta:\theta \in \Theta\}$, $\Theta\subset  \R^d$, be a family of Gibbs measures with potentials $ F_\theta$,
where the defining function is given by the vector  $\F\coloneqq(f_{0},\ldots,f_{d})$ of    H\"older continuous and real-valued functions $f_i$, $0\le i\le d$. 
For the control parameter $\eta>0$, we call any estimator $\widehat \theta_n:\Omega^{n} \to \Theta$,  an {\em $\eta$-maximum likelihood estimator  for $\theta$}   
if for $x=(x_{0},\ldots ,x_{n-1})\in \Omega ^{n}$ we have
\[
 \nu_{\widehat \theta_n(x)}([x])= \max_{s\in\Theta_{n}^{\eta}(x) } \nu_s([x]),
 \]
where 
\begin{equation}\label{eq:2.1}
 \Theta_{n}^{\eta}(x)\coloneqq \left\{ s\in \Theta\cap\left[- 1/ \eta,  1/\eta\right]^{d}:  \left| \frac1n S_{[x]} \F- \int \F\d\mu_{s}\right|_{\infty}\le \eta^2\right\}.
 \end{equation}
 \end{definition}

\begin{definition}\label{def:2.2} Let $\Theta\subset \mathbb R^d$ be an open  subset and $t\mapsto m_t$ be a continuous map defined on $\overline \Theta$ with values in the space of probability measures of a compact metric space. A sequence $t_n\in \overline \Theta$ is said to $\star${\em -converge to} $t\in \overline \Theta$ if $m_{t_n}$ converges weakly to $m_t$ and in this case we write $\slim t_{n}=t$.
\end{definition} 
\begin{remark}\label{rem:2.3}  Let $\Theta$ be open and bounded.
If $(t_n)\in \overline \Theta^{\N}$ is $\star$-convergent to $t \in \overline\Theta$ and if $x\to m_x$ is injective on $\overline \Theta$, then $t_n\to t$ also in the euclidean topology. To see this let $s$ be an $\star$-accumulation point of $(t_n)$. By continuity, $m_s$ is an accumulation point of $(m_{t_n})$,  hence $m_s=m_t$ and so by injectivity of the map  $x\mapsto m_x$ we get $s=t$. 
\end{remark}

In what follows, for a vector $v=(v_1,\ldots,v_d)\in \mathbb R^d$, we freely switch to matrix notation by writing $v^t $ for the vector in column form, where $v$ is understood in row form.
The inner product of two vectors $u$ and $v$ is denoted by
 $\langle u,v\rangle$.
 
 In this note we prove the following main result for the existence of the maximum likelihood estimator.
 \begin{theorem}\label{theo:2.4} Let $(X_k)_{k\in \mathbb N_0}$ be a random sequence drawn from Gibbs measures $(\nu_\theta)_{\theta\in \Theta}$ with H\"older continuous potential $F_\theta=f_{0}+f_{\theta}$, where $\emptyset \ne \Theta\subset \mathbb R^d$ is open. Then there exists a sequence $(\eta_n)_{n\ge 1}$ of positive numbers converging to zero such that:

For sufficiently large $n\in \mathbb N$ there exists an $\eta_n$-maximum likelihood estimator $\widehat \theta_n=(\theta_{n,1},\ldots,\theta_{n,d})$ based on $\X_{n}=(X_0,\ldots,X_{n-1})$. This estimator is strongly consistent in the sense that if $(X_k)_{k\in \mathbb N_0}$ is  drawn from the distribution $\nu_\theta$,  then  $\widehat \theta_n$ $\star$-converges  a.s.\@ to $\theta$.
\end{theorem}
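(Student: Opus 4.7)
The plan is to translate the likelihood maximization into thermodynamic language and then exploit the variational principle together with uniqueness of equilibrium states. By the Gibbs property there are uniform constants $0<c_{1}\le c_{2}<\infty$, independent of $s$ in any compact subset of $\Theta$, such that for every $n$-cylinder $[x]$, every $\omega\in[x]$ and every parameter $s$,
\[
c_{1}\le \frac{\nu_{s}([x])}{\exp\bigl(S_{n}F_{s}(\omega)-nP(F_{s})\bigr)}\le c_{2}.
\]
Hence maximizing $\nu_{s}([x])$ is, up to a bounded multiplicative error, the same as maximizing the empirical log-likelihood $\Lambda_{n}(s;x)\coloneqq \frac{1}{n}S_{n}F_{s}(\omega_{x})-P(F_{s})$, where $\omega_{x}$ is any chosen point of $[x]$. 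Because $F_{s}=f_{0}+\langle s,(f_{1},\ldots,f_{d})\rangle$ is linear in $s$ and $s\mapsto P(F_{s})$ is convex, the maximization is a well-behaved concave programme.

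Next I would pick the rate $(\eta_{n})$ so that, on the one hand, $\eta_{n}\downarrow 0$, and on the other, $\eta_{n}^{2}$ dominates the fluctuations in Birkhoff's theorem. Applying the ergodic theorem under $\mu_{\theta}$ to each $f_{i}$ and using the H\"older bounded-distortion estimate to compare $\tfrac1n S_{[x_{n}]}\F$ with the pointwise Birkhoff average along the $n$-cylinder $[x_{n}]$ containing $\omega$, one obtains that for $\mu_{\theta}$-a.e.\ $\omega$ and all sufficiently large $n$ the true parameter $\theta$ itself lies in $\Theta_{n}^{\eta_{n}}(x_{n})$. The constraint set is a closed subset of a compact box, and $s\mapsto \nu_{s}([x])$ depends continuously on $s$ (through the spectral properties of $\tilde{\mathcal L}_{s}$), so the maximum is attained: an $\eta_{n}$-maximum likelihood estimator $\widehat\theta_{n}$ exists.

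For consistency, by the definition of $\widehat\theta_{n}$ and the Gibbs bound above,
\[
\Lambda_{n}\bigl(\widehat\theta_{n};x\bigr)\ge \Lambda_{n}(\theta;x)-\frac{\log(c_{2}/c_{1})}{n}.
\]
Birkhoff's theorem applied to $f_{0},f_{1},\ldots,f_{d}$, together with continuity of $s\mapsto P(F_{s})$, gives the uniform-on-compacts convergence $\Lambda_{n}(s;x)\to \int F_{s}\d\mu_{\theta}-P(F_{s})$ almost surely. Let $t\in\overline\Theta$ be any $\star$-accumulation point of $(\widehat\theta_{n})$ along some subsequence $n_{k}$; passing to the limit yields
\[
\int F_{t}\d\mu_{\theta}-P(F_{t})\ge \int F_{\theta}\d\mu_{\theta}-P(F_{\theta}).
\]
By the variational principle, $P(F_{t})\ge h(\mu_{\theta})+\int F_{t}\d\mu_{\theta}$ with equality if and only if $\mu_{\theta}$ is the unique equilibrium state of $F_{t}$, i.e.\ $\mu_{t}=\mu_{\theta}$. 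The chain of inequalities forces equality, so $\mu_{t}=\mu_{\theta}$, which is precisely $\star$-convergence of $\widehat\theta_{n_{k}}$ to $\theta$ in the sense of Definition \ref{def:2.2}. As every $\star$-accumulation point equals $\theta$ in this sense, $\widehat\theta_{n}\to\theta$ $\star$-almost surely.

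The main obstacle is calibrating $(\eta_{n})$: it must shrink so that $\Theta_{n}^{\eta_{n}}(x)$ eventually pins down the identifying equation $\int\F\d\mu_{s}=\int\F\d\mu_{\theta}$, yet not so fast that $\theta$ is ruled out of its own confidence set. The ancillary difficulty is the replacement of $S_{[x]}\F$ by a genuine Birkhoff sum along a chosen $\omega_{x}\in [x]$, which is where the H\"older bounded-distortion estimate is used; the box constraint $[-1/\eta_{n},1/\eta_{n}]^{d}$ is what prevents $\widehat\theta_{n}$ from escaping to infinity before the identifying constraint kicks in.
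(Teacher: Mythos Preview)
Your overall strategy—reduce to the empirical log-likelihood $\Lambda_{n}(s)=\frac1n S_{n}F_{s}-P(F_{s})$ via the Gibbs property and invoke the variational principle—is the right instinct and is close to the paper's line. But the consistency paragraph has a genuine gap at the ``passing to the limit'' step. You take a $\star$-accumulation point $t$ of $(\widehat\theta_{n})$, meaning $\mu_{\widehat\theta_{n_{k}}}\to\mu_{t}$ weakly, and conclude $\Lambda_{n_{k}}(\widehat\theta_{n_{k}})\to\int F_{t}\d\mu_{\theta}-P(F_{t})$. That does not follow: $\Lambda_{n}(s)$ depends on the \emph{parameter} $s$ through $\langle s,\cdot\rangle$ and $P(F_{s})$, not through $\mu_{s}$, so weak convergence of $\mu_{\widehat\theta_{n_{k}}}$ gives no control on either term. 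Your uniform-on-compacts convergence would help only if $\widehat\theta_{n_{k}}\to t$ in the Euclidean sense and stays in a fixed compact, neither of which you have (the box $[-1/\eta_{n},1/\eta_{n}]^{d}$ grows). Relatedly, the Gibbs constants $c_{1},c_{2}$ are uniform only on compacts; since $\log D(t)=O(\|t\|)$, the error in your displayed inequality is really $O\big((n\eta_{n})^{-1}\big)$, not $O(1/n)$.

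The paper sidesteps all of this by \emph{not} using the maximization property for consistency at all. Proposition~\ref{prop:4.2} uses only membership $\widehat\theta_{n}\in\Theta_{n}^{\eta_{n}}$: the constraint forces $\big|\int F_{\theta}\d\mu_{\widehat\theta_{n}}-\int F_{\theta}\d\mu_{\theta}\big|=O(\eta_{n})$ and, crucially via the box bound $|\widehat\theta_{n}|_{1}\le d/\eta_{n}$, also $\big|\int F_{\widehat\theta_{n}}\d\mu_{\widehat\theta_{n}}-\int F_{\widehat\theta_{n}}\d\mu_{\theta}\big|=O(\eta_{n})$. Two applications of the variational principle (with potentials $F_{\theta}$ and $F_{\widehat\theta_{n}}$) then give $h_{\mu_{\widehat\theta_{n}}}\to h_{\mu_{\theta}}$. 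Now take any weak accumulation point $\mu$ of $(\mu_{\widehat\theta_{n}})$ in $\M(\Omega)$—no need to assume it has the form $\mu_{t}$—and combine upper semicontinuity of entropy with the constraint-forced equality $\int F_{\theta}\d\mu=\int F_{\theta}\d\mu_{\theta}$ to get $h_{\mu}+\int F_{\theta}\d\mu\ge P(F_{\theta})$, whence $\mu=\mu_{\theta}$ by uniqueness. This is exactly $\star$-convergence and requires no boundedness of the parameters.

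A smaller point: for the existence part, the bare ergodic theorem does not by itself produce a \emph{deterministic} sequence $\eta_{n}\downarrow 0$ with $\theta\in\Theta_{n}^{\eta_{n}}(\X_{n})$ eventually a.s.; the paper uses large-deviation bounds for the $f_{i}$ to get summable tails and then Borel--Cantelli (Lemma~\ref{lem:4.1}).
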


\subsection{Maximum potential estimator for invariant Gibbs measure}\label{sec:2.2}

In this section we discuss the existence of a consistent estimator from a theoretical viewpoint (the reader may consult with \cite{PU} or \cite{CM24} for more insight). This estimator will be called maximum potential estimator (MPE) since it arises from maximizing functions involving the pressure of the underlying potential $F_{t}$, 
\[ P(F_t)=\sup\left\{ h_\mu(T)+\int F_t \d\mu:\mu\in \mathcal{M}(\Omega)\right\}.\] 
 We find conditions for the existence of these maxima. A different question is whether such estimators are computable, and another arises in their asymptotic distribution to achieve statistical relevance. The latter two problems are dealt with in the following sections, and we will see that the maximum likelihood estimator has better properties in view of its computability. For the existence of the MPE, we consider two cases separately.

To begin with, let $H$ denote the subspace of all H\"older continuous functions $g$ on $\Omega$. We define an equivalence relation on $H$ via $f\sim g$ for $f,g\in H$ if $f$ and $g$ are  {\em cohomologous}, that is there exists a continuous function $h$ and a constant $c\in \R$ such that $g-f=c+ h-h\circ \sigma$  where $\sigma$ denotes the shift transformation on $\Omega$ as before.  

\begin{definition}\label{def:2.5}
We say that a family  $F\coloneqq (f_1,\ldots ,f_d)$   of  H\"older continuous functions on $\Omega$ is {\em linearly independent as cohomology classes} if for all $s\in \R^d$ the relation $f_{s}=\sum_{i=1}^{d}s_{i} f_i\sim 0$ implies $s=0$.
\end{definition}

For  $\mu\in \mathcal{M}(\Omega)$   and a $\mu$-integrable  function $f$  we write for its centralized version  $\tilde f^{\mu}\coloneqq f-\mu(f)$, $\mu(f)\coloneqq \int f \d \mu$. If $\mu=\mu_{\theta} $ is an invariant Gibbs measure for the potential $F_{\theta}$, then  we also use the short hand notation $\tilde f^{\theta}\coloneqq  \tilde f^{\mu_{\theta}}$. Recall that the {\em asymptotic covariance} of two square integrable functions $f,g$ with respect to $\mu$ is defined by
\[  \Cov _{\mu}(f,g)\coloneqq \lim_{m\to\infty} \frac 1m \int S_m \tilde f^{\mu}  \, S_m  \tilde g^{\mu} \d\mu\] and   we define the {\em covariance matrix} of $F=(f_1,\ldots,f_d)$ by  
$$\Sigma^{\mu} =(\Sigma_{ij}^\mu)_{1\le i,j\le d}\coloneqq  \Cov _{\mu}(F)\coloneqq (\Cov _{\mu}(f_{i},f_{j}))_{1\le i,j\le d}.$$

We believe that the following theorem is standard (cf.\ \cite{CM24}) but we give the proof for completeness.
\begin{theorem}\label{theo:2.6} Let $F\coloneqq(f_1,\ldots ,f_d)$ be a vector of H\"older continuous observables and let $\mu$ denote some invariant Gibbs measure. 
Then  $F$ is linearly independent as cohomology classes if and only if the associated covariance matrix $\Sigma^{\mu}\coloneqq  \Cov _{\mu}(F)$  is positive definite. 
 \end{theorem}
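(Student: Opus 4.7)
The plan is to reduce the equivalence to a single variance-rigidity statement for Hölder observables: for $f\in H$ with $\mu(f)=0$, the asymptotic variance
\[
\sigma^{2}(f)\coloneqq \lim_{n\to\infty}\tfrac{1}{n}\int(S_{n}f)^{2}\d\mu
\]
vanishes if and only if $f$ is a coboundary, i.e.\ $f=h-h\circ\sigma$ for some continuous $h$. By the bilinearity of $\Cov_{\mu}$ (which passes to the limit through the bilinear prelimits) and the linearity of the centering map $f\mapsto \tilde f^{\mu}$, one has for every $s\in \R^{d}$
\[
\sum_{i,j=1}^{d}s_{i}s_{j}\Sigma^{\mu}_{ij}=\Cov_{\mu}(f_{s},f_{s})=\sigma^{2}(\tilde f_{s}^{\mu}),\qquad f_{s}\coloneqq \sum_{i=1}^{d}s_{i}f_{i}.
\]
Each prelimit $\tfrac{1}{m}\int(S_{m}\tilde f_{s}^{\mu})^{2}\d\mu$ is nonnegative, so $\Sigma^{\mu}$ is automatically positive semidefinite; hence positive definiteness is equivalent to the nonexistence of $s\neq 0$ with $\sigma^{2}(\tilde f_{s}^{\mu})=0$.

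The easy direction of the variance-rigidity statement is telescoping: if $\tilde f_{s}^{\mu}=h-h\circ\sigma$ with $h$ continuous on the compact space $\Omega$, then $S_{n}\tilde f_{s}^{\mu}=h-h\circ\sigma^{n}$ is uniformly bounded, so $\tfrac{1}{n}\int(S_{n}\tilde f_{s}^{\mu})^{2}\d\mu\to 0$. Writing $f_{s}=\mu(f_{s})+\tilde f_{s}^{\mu}=\mu(f_{s})+h-h\circ\sigma$ shows $f_{s}\sim 0$. Thus whenever $F$ is linearly dependent as cohomology classes, there exists $s\neq 0$ with $\sigma^{2}(\tilde f_{s}^{\mu})=0$, so $\Sigma^{\mu}$ is not positive definite.

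The converse is the substantive content and the main obstacle: given a Hölder continuous centered observable $g=\tilde f_{s}^{\mu}$ with $\sigma^{2}(g)=0$, one must produce a continuous (in fact Hölder) function $h$ with $g=h-h\circ\sigma$. I would invoke the classical variance/coboundary rigidity for Hölder observables under mixing Gibbs measures, as formulated in Parry–Pollicott \cite{PP}: the central limit theorem identifies $\sigma^{2}(g)$ with the variance of the Gaussian limit of $S_{n}g/\sqrt{n}$, and combining this with Livshitz's theorem (two Hölder observables are cohomologous iff their Birkhoff sums agree on every periodic orbit) forces $g$ to be a Hölder coboundary when $\sigma^{2}(g)=0$. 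Granted this result, $g=h-h\circ\sigma$ yields $f_{s}=\mu(f_{s})+h-h\circ\sigma\sim 0$ for some $s\neq 0$, so $F$ is linearly dependent as cohomology classes, completing the biconditional.
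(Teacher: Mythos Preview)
Your proof is correct and follows essentially the same route as the paper: both reduce positive definiteness to the scalar statement that $\sigma^{2}(\tilde f_{s}^{\mu})=0$ iff $f_{s}\sim 0$, handle the easy direction by telescoping, and invoke the classical variance--coboundary rigidity for H\"older observables for the converse. The only cosmetic difference is that the paper phrases the hard direction via the spectral gap of the Perron--Frobenius operator (its Lemma~\ref{lem:3.5}) while you cite Parry--Pollicott/Livshitz; these are equivalent standard references for the same fact.
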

\begin{proof} First note that by the mixing property of Gibbs measures on mixing subshifts of finite type the asymptotic covariances $\Cov _{\mu}(f_{i},f_{j})$ exist for all $0\le i,j\le d$ and are linear in both arguments and symmetric.
 The property of the familiy $(f_1,\ldots ,f_d)$ to be linearly independent as cohomology classes is equivalent to the fact that   for   each $s\in \R^{d}\setminus\{0\}$, the function $f_{s}   $ is not cohomologous to zero which holds  if and only if, for all $s\in \R^{d}\setminus\{0\}$
\[0<\Cov _{\mu}(f_{s},f_{s})=s \Sigma^{\mu}s^{t},\] 
  as is well known (cf.\@    Lemma \ref{lem:3.5}). This gives the claim.
\end{proof} 

\paragraph{Full rank.}

Let $\Theta\subset \R^{d}$ be an open subset and let $\mu_{t}\coloneqq \mu_{F_{t}}$ denote the invariant Gibbs measure for the potential $F_{t}$, $t\in \Theta$. Fix $\theta_0\in \Theta $,  and  random samples $\X_{n}\coloneqq (X_{0},\ldots, X_{n-1})$ drawn with respect to $\mu_{\theta_0}^{n}$($n\in \N$), the marginal distribution on $n$-cylinders of $\mu_{\theta_0}$. We assume here that the covariance matrix $\Sigma^{\mu_{\theta_0}}$ has full rank. 
 By the Gibbs property we have
\begin{equation}\label{eq:2.2}
-K\leq \log\mu_{\theta_0}[\X_{n}]-S_{\X_{n}}F_{\theta_0}+nP(F_{\theta_0})\leq K.
\end{equation}
 This means that $ \log\mu_{\theta_0}[\X_{n}]$ and $n(\langle \alpha_{n}, (1,\theta_0)\rangle -P(F_{\theta_0}))$ with $$\alpha_{n}=(\alpha_n^i)_{0\le i\le d}\coloneqq S_{\X_{n}}\F/n,$$  are $2K$ close for all $n\in \N$ and independent of $\X_{n}$, where $\F=(f_0,\ldots,f_d)$. By ergodicity we have $\lim _{n}\alpha_{n}= \int \F\d\mu_{\theta_0}$ a.\,s.  
Our cohomology assumption on $f$ implies that $t\mapsto  \int F_{t}\d\mu_{\theta_0}-P(F_t)$ is strictly concave and attains a global maximum at $t=\theta_0$, for by Lemma \ref{lem:3.2}
the first partial derivatives vanish and the second derivative is given by  $\Sigma^{\mu_{\theta_0}}$. Since $\Sigma^{\mu_{\theta_0}}$ has full rank there is $\theta\in \Theta$ such that
\[ \Sigma^{\mu_{\theta_0}}( \theta-\theta_0)=  (\alpha_n^i-\int f_i \d\mu_{\theta_0})_{1\le i\le d}.\] 

Likewise $\langle\alpha_{n},(1,t)\rangle-P(F_{t})$ is strictly concave as well and has a unique maximum in some $\tilde\theta_{n}\in \R^{d}$ with $\nabla P(F_{s})(\tilde\theta_{n})=\alpha_{n}$ and we have 
\[\lim \tilde\theta_{n}\to \theta_0,\] 
where $\theta_0$ is uniquely determined by  $\nabla P(f_{s})(\theta_0)=\left(\int f_{i}\d\mu_{\theta_0}\right)_{i=1,\ldots, d}$. Let 
 us call  $\tilde\theta_n$ the {\em maximum-potential estimator} (MPE).

 Taylor expansion of  $\langle\alpha_{n},t\rangle-P(f_{t})$  in $\tilde\theta_{n}$ guarantees that 
\[
\vert\langle\alpha_{n},\tilde\theta_{n}+s\rangle-P(f_{\tilde\theta_{n}+s})-\langle\alpha_{n},\tilde\theta_{n}\rangle+P(f_{\tilde\theta_{n}})\vert\geq C \lVert s\rVert^{2}.
\]
where $C>0$ is the minimal eigenvalue of $\Sigma^{\mu_{\tilde\theta_{n}}}$. 
These observations imply that the maximum of $\log\mu_{t}[\X_{n}]$,   obtained in some $\hat\theta _{n}$ (maybe not unique),   lies in a  $\sqrt{K/n}/C$-neighborhood of $t_{n}$.
Combining the above gives $\hat\theta _{n}\to \theta_0$, hence consistency of the MLE as well.

We summarize in 
\begin{theorem}\label{theo:2.7}
  Let $(F_t)_{t\in \Theta}$  be a family of H\"older potentials with open parameter space $\Theta\subset \mathbb R^d$ defined as before via $\F \coloneqq (f_{0},\ldots,f_{d})$.  If the covariance matrix $\Cov_{\mu_{\theta_{0}}}(F)$ in $\theta_0\in \Theta$ has full rank, then there exists a neighbourhood  $U$ of $\theta_0$ such that with $\alpha_{n}\coloneqq n^{-1}S_{\X_{n}}\F$, the  estimator  defined by
  \[ \tilde \theta_n \equiv \argmax_t \{\langle\alpha_{n},(1,t)\rangle-P(F_{t})\}\] 
  exists and converges to $\theta_0$ a.s. 
  \end{theorem}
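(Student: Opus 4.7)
The plan is to combine the $C^{2}$-regularity and strict convexity of the topological pressure with Birkhoff's ergodic theorem, and then apply the inverse function theorem so that $\tilde\theta_n$ is exhibited as a continuous function of the empirical averages $\alpha_n$.

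First I would record the analytic background. By the thermodynamic formalism (cf.\ Lemma~\ref{lem:3.2}), the map $t\mapsto P(F_t)$ is real-analytic on $\Theta$ with $\partial_i P(F_t)=\int f_i\d\mu_t$ and Hessian $(\partial_i\partial_j P(F_t))_{1\le i,j\le d}=\Sigma^{\mu_t}\coloneqq \Cov_{\mu_t}(f_1,\ldots,f_d)$. By the full-rank hypothesis at $\theta_0$ and the continuity of $t\mapsto \Sigma^{\mu_t}$, there is an open neighbourhood $U\subset \Theta$ of $\theta_0$ on which $\Sigma^{\mu_t}$ is uniformly positive definite. For every realization of $\X_n$, the function $Q_n(t)\coloneqq \langle \alpha_n,(1,t)\rangle - P(F_t)$ then has Hessian $-\Sigma^{\mu_t}$, hence is strictly concave on $U$, uniformly in $n$ and $\X_n$.

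Next I would apply the inverse function theorem to the map $\Phi\colon U\to \R^{d}$ defined by $\Phi(t)\coloneqq \bigl(\int f_i\d\mu_t\bigr)_{1\le i\le d}$: since $D\Phi(\theta_0)=\Sigma^{\mu_{\theta_0}}$ is invertible, $\Phi$ restricts to a $C^{1}$-diffeomorphism of some open $V\subset U$ containing $\theta_0$ onto an open $W\ni \Phi(\theta_0)$. For any $\alpha\in W$, the unique critical point of $t\mapsto \langle \alpha,t\rangle - P(F_t)$ in $V$ is $\Phi^{-1}(\alpha)$, and by the strict concavity established above this point is the unique maximizer of $Q_n$ over $V$.

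Finally I would invoke Birkhoff's ergodic theorem for the ergodic invariant measure $\mu_{\theta_0}$: almost surely $\alpha_n^{i}\to \int f_i\d\mu_{\theta_0}$ for $1\le i\le d$. Therefore, almost surely $(\alpha_n^{1},\ldots,\alpha_n^{d})\in W$ for all sufficiently large $n$, so $\tilde\theta_n\coloneqq \Phi^{-1}(\alpha_n^{1},\ldots,\alpha_n^{d})\in V$ is well defined, uniquely maximises $Q_n$ on $V$, and $\tilde\theta_n\to \theta_0$ a.s.\ by continuity of $\Phi^{-1}$. The point that demands care is the meaning of the $\argmax$: my argument produces a unique strict local maximizer inside the neighbourhood $V$ furnished by the inverse function theorem, which is the content the statement---asserting mere existence of some neighbourhood $U$---appears to intend. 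Global uniqueness of the maximizer over all of $\Theta$ would require $\Sigma^{\mu_t}$ to be positive definite throughout $\Theta$, a condition not assumed here.
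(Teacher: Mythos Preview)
Your argument is correct and follows essentially the same route as the paper: both use Lemma~\ref{lem:3.2} to identify $\nabla_t P(F_t)=\bigl(\int f_i\d\mu_t\bigr)_i$ with Hessian $\Sigma^{\mu_t}$, invoke the ergodic theorem to get $\alpha_n\to\bigl(\int f_i\d\mu_{\theta_0}\bigr)_i$ a.s., and conclude that the critical-point equation $\nabla P(F_t)=\alpha_n$ is solved near $\theta_0$ for large $n$. The only difference is packaging: the paper argues directly from strict concavity of $Q_n$ that the maximizer is unique and depends continuously on $\alpha_n$, whereas you make this step explicit via the inverse function theorem applied to $\Phi=\nabla P$; your caveat about local versus global maxima is also a fair reading of the role of the neighbourhood $U$ in the statement.
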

 
 This argument works for both the invariant and the non-invariant Gibbs measures.
 Other properties of the MPE are deferred to Section \ref{sec:7.2}.
 
\paragraph{General case.}

Now we assume that $\mathrm{Ker}(\Sigma^{\mu_{\theta}})=\{v_{1},\ldots ,v_{\ell}\}\eqqcolon V$ has dimension $ \ell<d$. 
Then we observe
\[
\forall v\in V:\mu_{t+v}=\mu_{t}.
\]
This means that $\log \mu_{t}$ is constant along the subspace $V$  and we can  restrict our attention to $ V^{\perp}\simeq\R^{d}/V\simeq\R^{d-\ell }$.  Using \eqref{eq:2.2}  and the fact that $t\mapsto P(F_{t})$  restricted to $V^{\perp}$ is strictly convex, we obtain a unique  point $t_{n}\in V^{\perp}$ maximizing $\langle \alpha_{n},(1,t)\rangle+P(F_{t})$ in $V^{\perp}$ as well as a maximum  of $t\mapsto \log\mu_{t}[\X_{n}]$ in $s_{n }\in  V^{\perp}$, which lies   in a $\sqrt{K/n}/C$-neighborhood of $t_{n}$; here $C>0$ denotes the minimal non-zero eigenvalue of $\Sigma^{\mu_{\theta}}$. Let $p^{\perp}$ denote the orthogonal projection of $\R^{n}$ onto $V^{\perp}$.  Then any element in $\Theta\cap (p^{\perp})^{-1}(\{s_{n}\})$ is a possible MPE.
 Hence, we can choose an estimator $\hat\theta_{n}$ as an element with minimal euclidian norm in 
\begin{equation*}\Theta_{n}\coloneqq\{t\in \overline \Theta:\mu_{t}([\X_{n}])=\max_{s\in \overline \Theta}\mu_{s}([\X_{n}])\}
\end{equation*} that is,
\[
\hat\theta_{n}\in \Theta_{n}\qquad \mbox{ and }\qquad \|\hat\theta_{n}\|= \inf_{t\in \Theta_{n}}   \|t\|.
\]
  
\subsection{Asymptotic distribution of the MLE}\label{sec:2.3}

 By Theorem \ref{theo:2.6}  and its consequences  maximum likelihood estimators exist if the H\"older continuous potentials $f_1,...,f_d$ are linearly independent as cohomoloy classes.In this case the map $t\mapsto \nu_t$ is injective so Theorem \ref{theo:2.4} can be used to calculating an approximate version of this estimator. In this case we can speak of a uniquely defined MLE. We  thus need to investigate its distributional properties.

\begin{theorem}\label{theo:2.8} Let $f_0,\ldots,f_d$ be H\"older continuous functions on $\Omega$ and $\theta\in \Theta$ where  $\emptyset \ne \Theta\subset \mathbb R^d$ is open and suppose that  the vector $F\coloneqq (f_1,\ldots,f_d)$ is linearly independent   as  cohomology classes. Assume that  $(X_k)_{k\in \mathbb N_0}$ is a random sequence drawn from the Gibbs measure $\nu_\theta$ (resp. $(\mu_\theta)$)  with  respect to the  potentials
\[ F_\theta=f_0+\sum_{i=1} \theta_if_i \quad f_i:\Omega\to \mathbb R,\ 0\le i\le d.\]  
Then 
\begin{enumerate}
\item  The maximum likelihood estimator  $\widehat \theta_n$ is {\em consistent}, more precisely, this means that  $\lim_{n\to\infty} \widehat \theta_n (X_0,\ldots,X_{n-1}) = \theta$ a.s..
\item The asymptotic covariance matrix   $\Sigma^{\mu_{\theta}}=\Cov_{\mu _{\theta}}(F)$ is invertible. We have that the matrix
\[ G(m_1,\ldots,m_d)= (m_1,\ldots,m_d)^t (m_1,\ldots,m_d)-\Sigma^{\mu_{\theta}}\]
is invertible for almost all $(m_1,\ldots,m_d)\in \mathbb R^d$ with respect to the $d$-dimensional normal distribution $\mathcal{N}(0,\Sigma^{\mu_{\theta}})$.
\item  $\sqrt{n}(\widehat \theta_n-\theta)$  converges in distribution  to the random variable  $G^{-1}(N)N^{t}$, where where $N\sim \mathcal{N}(0,\Sigma^{\mu_{\theta}})$ and $G^{-1}(N)$ denotes the inverse of the matrix $G(N)$.
\item $\widehat \theta_n$ is {\em asymptotically efficient} in the sense that 
\[ \int \left |\widehat \theta_{n}-\theta\right |_2^2 \!\!\d\mu_\theta\le \frac 1n  \int \left\| G^{-1}\left((n^{-1/2} S_n\widetilde f^{\theta}_i)_{i=1}^{d}\right)\right\|^2 \sum_{j=1}^{d} \frac 1n \left(S_n\widetilde f_j^\theta\right)^2\d\mu_\theta,\] 
where the norm on the right hand side denotes the operator norm with respect to the euclidean norm and  is allowed to  be infinite.

Moreover, 
\[\lim_{n\to\infty} n \int (\widehat \theta_n-\theta)^t (\widehat \theta_n-\theta) \d\mu_\theta =   \E N(G^{-1}(N))^2 N^{t}.\]
\end{enumerate} 
\end{theorem}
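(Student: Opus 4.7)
The plan is to handle the four parts in sequence. Part (1) follows from the a.s.\@ $\star$-consistency established in Theorem \ref{theo:2.4} combined with Remark \ref{rem:2.3}: to upgrade to Euclidean convergence one needs the map $t\mapsto\mu_t$ to be injective on $\overline\Theta$, which follows from Bowen--Ruelle uniqueness (two Gibbs measures coincide iff their potentials are cohomologous up to a constant) together with the linear-independence-as-cohomology-classes hypothesis on $(f_1,\ldots,f_d)$. Part (2) splits cleanly: invertibility of $\Sigma^{\mu_\theta}$ is exactly Theorem \ref{theo:2.6} under the independence hypothesis; for the almost sure invertibility of $G(N)$, observe that $m\mapsto\det G(m)$ is a polynomial in $m=(m_1,\ldots,m_d)$ with $\det G(0)=(-1)^d\det\Sigma^{\mu_\theta}\neq 0$, so it is a non-trivial polynomial, its zero set is Lebesgue-null, and the distribution $\mathcal N(0,\Sigma^{\mu_\theta})$ is absolutely continuous with respect to Lebesgue measure.

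For Part (3) I would carry out a second-order Taylor expansion of the log-likelihood $L_n(t)\coloneqq\log\nu_t([\X_n])$ about $\theta$. The Gibbs property yields $L_n(t)=-nP(F_t)+S_nF_t(y)+R_n(t,x)$ with $R_n$ bounded and smooth in $t$ on compact subsets of $\Theta$, and the thermodynamic identities $\partial_{t_i}P(F_t)=\int f_i\d\mu_t$ and $\partial_{t_j}\partial_{t_i}P(F_t)=\Sigma^{\mu_t}_{ij}$ (Section \ref{sec:3.1}) then provide explicit leading-order expressions for the score and the Hessian. The normalized score $n^{-1/2}\nabla L_n(\theta)=n^{-1/2}S_n\tilde F^\theta+o(1)$ converges in distribution to $N\sim\mathcal N(0,\Sigma^{\mu_\theta})$ by the CLT for H\"older observables under mixing Gibbs measures. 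The delicate point is the identification of the effective Hessian: beyond the classical contribution $-n\Sigma^{\mu_\theta}$ coming from $-\nabla^2P$, expanding the gradient condition $\nabla L_n(\hat\theta_n)=0$ about $\hat\theta_n$ rather than $\theta$ generates an additional rank-one contribution of the form $n^{-1}(S_n\tilde F^\theta)^t(S_n\tilde F^\theta)$, whose distributional limit is the outer product $N^t N$ --- this is precisely the weighted $\chi^2$-type content of Theorem \ref{theo:3.10}. Jointly, the two contributions produce the effective matrix $-G(N)=\Sigma^{\mu_\theta}-N^tN$; inverting and applying to the normalized score yields $\sqrt n(\hat\theta_n-\theta)\to G^{-1}(N)N^t$.

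Part (4) starts from the finite-sample analogue $\hat\theta_n-\theta=G_n^{-1}\cdot n^{-1}\nabla L_n(\theta)$, where $G_n$ is the finite-$n$ version of the effective matrix (with $(n^{-1/2}S_n\tilde f_i^\theta)_i$ in place of $N$). Applying the operator-norm bound $|G_n^{-1}u|_2\le\|G_n^{-1}\|\cdot|u|_2$, taking expectations under $\mu_\theta$, and substituting $\nabla L_n(\theta)\approx S_n\tilde F^\theta$ produces the displayed inequality, which is permitted to be infinite because $\|G_n^{-1}\|$ blows up near the zero set of $\det G_n$. The asymptotic identity $\lim_n n\int(\hat\theta_n-\theta)^t(\hat\theta_n-\theta)\d\mu_\theta=\E N(G^{-1}(N))^2N^t$ then follows from Part (3) together with a uniform-integrability argument based on exponential moment bounds for $S_n\tilde F^\theta$, standard for H\"older observables under Gibbs measures via the spectral gap of the transfer operator or $\psi$-mixing (cf.\ \cite{D2,DK01}).

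I expect the primary obstacle to lie in Part (3), namely the careful second-order bookkeeping that produces the rank-one correction $N^tN$ and hence identifies $G(N)$ rather than the naive $\Sigma^{\mu_\theta}$ as the limiting effective Hessian; it is this correction that is ultimately responsible for the non-standard asymptotic law of the estimator and, in particular, for the possibility that the limiting quadratic risk is infinite.
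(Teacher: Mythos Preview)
Parts (1), (2), and (4) are handled essentially as the paper does. Your polynomial argument for the a.s.\ invertibility of $G(N)$ is in fact a cleaner alternative to the paper's Lemma~\ref{lem:4.3}, which argues by hand via the eigenspace decomposition of $\Sigma^{\mu_\theta}$; both reach the same conclusion.

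The gap is in Part (3), exactly at the spot you flag as delicate. You propose to expand the \emph{log}-likelihood $L_n(t)=\log\nu_t([\X_n])$, but the Hessian of $L_n$ does \emph{not} carry the rank-one term. Writing $C=[\X_n]$,
\[
\partial_i\partial_j L_n(\theta)=\frac{D_{ij}\nu_\theta(C)}{\nu_\theta(C)}-\frac{D_i\nu_\theta(C)\,D_j\nu_\theta(C)}{\nu_\theta(C)^2},
\]
and by Lemma~\ref{lem:3.3} both terms on the right equal $S_n\tilde f_i^\theta\,S_n\tilde f_j^\theta$ to leading order; they cancel, leaving $\tfrac{1}{n}\nabla^2 L_n(\theta)\to-\Sigma^{\mu_\theta}$. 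Your proposed mechanism (``expanding about $\hat\theta_n$ rather than $\theta$'') does not recover the missing piece: by consistency the Hessian of $L_n$ at $\hat\theta_n$ has the same deterministic limit $-\Sigma^{\mu_\theta}$. A log-likelihood expansion therefore leads to the classical limit $(\Sigma^{\mu_\theta})^{-1}N^t$, not to the stated $G^{-1}(N)N^t$.

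The paper's route (Proposition~\ref{prop:4.4}) avoids the logarithm entirely: it Taylor-expands the first-order condition $D_i[\nu_t(C)](\hat\theta_n)=0$ for $\nu_t(C)$ itself about $\theta$, so that the relevant matrix is $\tfrac{1}{n\nu_\theta(C)}\big(D_{ij}[\nu_t(C)](\theta)\big)_{ij}$. By (\ref{eq:3.4}) this equals $\tfrac{1}{n}S_n\tilde f_i^\theta S_n\tilde f_j^\theta-\Sigma_{ij}^{\mu_\theta}+o(1)$ and hence converges in distribution, \emph{jointly with the normalized score}, to the random matrix $N^tN-\Sigma^{\mu_\theta}=G(N)$. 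That is where the outer product survives: it lives in $D_{ij}\nu_t(C)$, not in $\nabla^2\log\nu_t(C)$. The conclusion then comes from the continuous mapping theorem rather than Slutsky, since the limiting ``Hessian'' is random. To align your outline with the statement you are proving, you should either switch to expanding $\nu_t(C)$ directly as in Proposition~\ref{prop:4.4}, or explain why the two expansions yield the same limit (they produce formally different expressions, related by a Sherman--Morrison identity, and the paper's version is the one appearing in the theorem).
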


The proof of this result is contained in Section \ref{sec:4}. The next section recalls definitions, the setup and preliminaries.

\begin{remark}\label{rem:2.9}
 Theorem \ref{theo:2.8} 4. is connected to the Cram\'er--Rao bound and  the Fisher information in as much that the asymptotic rate of decrease is $1/n$. The latter is a matrix defined via  the density 
for $\nu_\theta$ and it  satisfies  
\begin{equation}\label{eq:2.3} 
\lim_{n\to\infty} \frac 1n I^n(\theta) =\lim_{n\to\infty} \frac 1n \int S_n\widetilde f_i^\theta S_n\widetilde f^\theta_j \d\nu_\theta.
\end{equation}
This becomes clear from the proof of Theorem \ref{theo:2.8} and Lemma \ref{lem:4.5} below. 
It should be noted that the limit in (\ref{eq:2.3}) exists since the coordinate process under the probability measure $\nu_\theta$
is non-stationary, but $\psi$-mixing; hence the standard proof for the stationary case works here as well. 
Likewise the same type of limit exists for the stationary distribution $\mu_\theta$ and we obtain in this situation $\Sigma^{\mu_{\theta}}$. 
This expression is called in \cite{Chu} the {\em asymptotic Fisher information}, see (\ref{eq:3.11}, Proposition \ref{prop:3.9}) below in Subsection \ref{sec:3.4}. 
 \end{remark}

\section{Preliminaries}\label{sec:3} 

This section is used to formulate the problem in more detail, to recall statistical terminology and facts from thermodynamic formalism. 

\subsection{Gibbs measures}\label{sec:3.1}

We begin describing the   thermodynamic formalism.  
For $a\in\mathbb N$ let $\Omega=\{(x_n)_{n\ge 0}: a_{x_nx_{n+1}}=1\}\subset\{1,\ldots ,a\}^\mathbb N$ be defined by a $\{0,1\}$-valued incidence matrix $(a_{ij})_{1\le i,j\le a}$. This set is called a subshift of finite type over $a$ letters. It carries the natural product topology and thus also the natural Borel $\sigma$-field.
Denote by
$\M(\Omega)$  the set of $\sigma$-invariant  probabilities, where $\sigma :\Omega\to\Omega$ denotes the left shift map. We fix   $  (f_{0},\ldots,f_{d})$,  $d\in \mathbb N$, a vector  of  H\"older potentials defined on $\Omega$.

It is well known (cf. \cite{Bo}) that
for each $t \in  \mathbb{R}^d$ there exists a unique Gibbs  probability $\nu_t$  for the potential $F_t\coloneqq  f_0+ \sum_{i=1}^d t_i f_i$ which  is completely characterized by Bowen's formula (\cite{Bo}), i.\,e.\@  $\mathcal{L}^{*}_{F_{t}}\nu_{F_{t}}=\e^{P(F_{t})}\nu_{F_{t}}$ with $\mathcal{L}^{*}_{F_{t}}$ denoting the dual of the Perron-Frobenius operator defined below  and where $\e^{P(F_t)}$ is the maximal eigenvalue. For some $D(t)>0$ the Gibbs property holds:
\begin{equation} \label{eq:3.1}
D(t)^{-1} \le \frac {\nu_t([c_0,\ldots ,c_{n-1}]}{\exp(-nP(F_t)+ S_{n}F_tz)} \le D(t)
\end{equation} 
where  $n\ge 1$,
$[c_0,\ldots ,c_{n-1}]:=\{ \omega\in \Omega |\  \omega_i=c_i,\ \forall 0\le i<n\}\subset \Omega$ denotes a general cylinder set of length   $n$, $z$ is any point in $[c_0,\ldots ,c_{n-1}]$ and $S_nh(x)= \sum_{k=0}^{n-1} h(\sigma^k(x))$ for $x\in \Omega$ and $h:\Omega\to\mathbb R$ is measurable. Note that here $\nu_t$ is not necessarily shift invariant. The quantity
$P(F_t)$ is a constant and called the pressure  of the potential $F_t$. 
We will see later that $\log D(t)=\mathcal{O}(\|t\|)$ for large $\|t\|$.

Throughout the paper we make the convention to write $C$ to denote a cylinder set $[c_0,\ldots ,c_{n-1}]:=\{x\in \Omega: x_i=c_i\ \forall\ 0\le i <n\}$, sometimes without mentioning the letters $c_i\in\{1,\ldots ,a\}$.

Now we fix $n\ge 1$. The random variables $X_0,\ldots ,X_{n-1}$ are assumed to take values in $\{1,\ldots ,a\}$ and have the joint  distribution given by the probabilities
\[ \nu_t^n(X_0=c_0,\ldots ,X_{n-1}=c_{n-1}) =\nu_t([c_0,\ldots ,c_{n-1}])\] 
for all  $(c_0,\ldots ,c_{n-1})\in \Omega^{n}$. Because of its definition $X_0,\ldots ,X_{n-1}$ is a finite time series, which is not necessarily stationary because $\nu_t$ may  not be
shift invariant.

 Instead of using (\ref{eq:3.1}) we need the exact form of $\nu_t([c_0,\ldots ,c_{n-1}])$ which can be derived using a functional analytic representation.

For each $t\in \mathbb R^d$ the Perron-Frobenius (or transfer-) operator $\mathcal L_t$ for the potential $F_t$ is defined for bounded measurable functions $g:\Omega\to\mathbb R$ as
\begin{equation*}
 \mathcal L_tg(x) =\sum_{x=\sigma y}  g(y)\e^{F_t(y)}.
 \end{equation*}
Throughout the paper  we shall use the convention $kx=(k,x_0,x_1,\ldots )$ for $x=(x_0,x_1,\ldots )\in \Omega$ and $k\in \{1,\ldots,a\}$ as long $kx\in \Omega$. Likewise we shall use the convention $cx=(c_0,\ldots ,c_{n-1},x_0,\ldots )$ for   admissible  $n$-words   $(c_0,\ldots ,c_{n-1})\in\{1,\ldots,a\}^{n }$  and points $x\in \Omega$.
Classical results ensure that $\mathcal L_t$ acts on the space of continuous functions $C(\Omega)$ and on the space of H\"older-continous functions as well. The dual operator $\mathcal L_t^*:C(\Omega)^*\to C(\Omega)^*$ has a maximal eigenvalue $\Lambda_t$ and a unique normalized eigenmeasure $\nu_t$ for this eigenvalue. This eigenmeasure is called the  {\em Gibbs measure} or Gibbs distribution for the potential $F_{t}$. There is a H\"older-continuous eigenfunction $\varphi_t>0$ for the operator $\mathcal L_t: C(\Omega)\to C(\Omega)$ and  eigenvalue $\Lambda_t$. The invariant Gibbs measure for the potential $F_t$ is then defined as $\mu_t=\varphi_t \nu_t$ which we assume to be normalized, i.\,e.\@  $\int \varphi_t \d\nu_t=1$. It is also well known that $\Lambda_t=\e^{P(F_t)}$ where $P(F_t)$ denotes the pressure of the potential $F_t$.

The exact form of $\nu_t(C)$ for a cylinder $C\coloneqq [c_0,\ldots ,c_{n-1}]$   is then
\begin{equation}\label{eq:3.2} \nu_t(C)=\int \1 _C \d\nu_t= \Lambda_t^{-n} \int \mathcal L^n \1 _C \d\nu_t=
\int \e^{-nP(F_t)+S_nF_t(cx)}\d\nu_t(x).
\end{equation}

The measure $\mu_t$ also arises as an eigenmeasure for a transfer operator $\widetilde {\mathcal L_t}$ -- in the same way as  $\nu_t$ is obtained from $\mathcal L_t$ -- by replacing the potential  $F_t$ by $\tilde F_t= -P(F_t) +F_t +\log\varphi_t-\log [\varphi_t\circ \sigma]$. Thus it is also a Gibbs measure and   invariant, since it is known that the eigenfunctions $\varphi_t$ are H\"older continuous. In fact 
\begin{eqnarray*}
 \mu_t(C) &=& \int \1 _C \varphi_t \d\nu_t= \Lambda_t^{-1}\int \mathcal L_t [\1 _C \varphi_t] \d\nu_t= \int \mathcal L_t [\e^{-P(F_t)}\1 _C \varphi_t] \d\nu_t \\
 &=& \int \sum_{k=1}^a \1 _C(kx) \frac {\varphi_t(kx)}{\varphi_t (\sigma(kx))} \e^{F_t(kx)}\varphi_t(x) \d\nu_t(x)  \\
 &=& \int \sum_{k=1}^a \1 _C(kx) \e^{-P(F_t)+F_t(kx) +\log\varphi_t(kx)-\log\varphi_t(\sigma(kx))} \d\mu_t(x)\\
 &=& \widetilde{\mathcal L_t}^* \mu_t(C),
 \end{eqnarray*}
 Therefore by (\ref{eq:3.2}),
 \[ \mu_t(C)= \int \e^{-nP(F_t)+S_nF_t(cx)+\log\varphi_t(cx)-\log\varphi_t(\sigma(cx))} \d\mu_t(x).\] 
 This last equality shows that it is preferable to use the MLE for $\nu_t$ and not for $\mu_t$.

\subsection{Perturbation theory for Gibbs distributions}\label{sec:3.2}

Perturbation theory for transfer operators determines  its analytic properties. This has been studied by many authors (see e.g.\@ \cite{PP}, \cite{Lal2}, \cite{LR1} or \cite{Ru} for more details) and which is reviewed shortly here in as much as it is needed below. We use the notation introduced in Section \ref{sec:3.1}.

Now, consider the operators $\mathcal L_z$ for $z\in \mathbb C^d$ defined analogously to the real case. Then it is known that $z\mapsto \mathcal L_z$ is an entire holomorphic function of $z \in \mathbb C^d$, and
\[ \left(\partial/\partial z_j\right)\mathcal L_zg= \mathcal L_z(f_jg).\] 

Here we recall the formulation by Lalley.
 
\begin{proposition}[\cite{Lal1}, Proposition 4]\label{prop:3.1} The functions $z \mapsto \Lambda_z$,  $z\mapsto \varphi_z$ have analytic extensions to a complex neighborhood $U\subset \mathbb C^d$ of $z_0\in \mathbb R^d$, such that
 \[ \mathcal L_z \varphi_z=\Lambda_z \varphi_z  \;\mbox{ and }\; \int \varphi_z \d\nu_0 =1\qquad z\in U.
 \]
The function $z\mapsto \nu_z$ extends to a weak-$\star$-analytic measure-valued function on $U$ such that
\[ \mathcal L^*_z\nu_z=\Lambda_z \nu_z \;\mbox{ and }\;  \int \varphi_z \d\nu_z=1\qquad z\in U.
\]
For each  $z^*\in \mathbb R^d$ and each $\delta >0$ there exists $\epsilon=\epsilon(\delta, z^*) >0$ such that if $z\in U$ and $\|z - z^*\| \le \epsilon$, then
\[
     \operatorname{spectrum} \mathcal L_z\setminus\{\lambda_z\}\subset \{ x\in \mathbb C:\ |z|\le \Lambda_{z^*}-\delta\}.\] 
 \end{proposition}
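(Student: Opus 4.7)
The plan is to combine the Ruelle--Perron--Frobenius theorem at the real point $z_0$ with analytic perturbation theory \`a la Kato applied to the operator family $z\mapsto \mathcal L_z$. The natural functional-analytic setting is the Banach space $\mathcal H_\alpha$ of $\alpha$-H\"older continuous functions on $\Omega$ (for the H\"older exponent shared by $f_0,\ldots,f_d$), endowed with the usual norm. First I would verify that $z\mapsto \mathcal L_z$ is an entire holomorphic map from $\mathbb C^d$ into the bounded operators $\mathcal B(\mathcal H_\alpha)$: since $F_z(y)=f_0(y)+\sum_i z_i f_i(y)$ is linear in $z$, the Neumann-type expansion of $\e^{F_z}$ in powers of $z_1,\ldots,z_d$ converges in $\mathcal H_\alpha$ uniformly on bounded sets of $z$, and the summation over pre-images preserves H\"older regularity via the standard Lasota--Yorke / Doeblin--Fortet contraction built into transfer operators on subshifts of finite type.

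Next, at the real parameter $z_0$ the classical Ruelle theorem gives a spectral gap: $\Lambda_{z_0}=\e^{P(F_{z_0})}$ is a simple, isolated eigenvalue of $\mathcal L_{z_0}$ on $\mathcal H_\alpha$, the associated eigenfunction $\varphi_{z_0}$ is strictly positive, and the remainder of the spectrum is contained in a disk of radius strictly less than $\Lambda_{z_0}$. Choose a small contour $\gamma$ in $\mathbb C$ enclosing $\Lambda_{z_0}$ and no other spectral point. By continuity of the resolvent in the operator norm, for $z$ in a small complex neighborhood $U$ of $z_0$ the spectral projection
\[
\Pi_z=\frac1{2\pi\mathrm i}\oint_\gamma (\zeta I-\mathcal L_z)^{-1}\d\zeta
\]
is well defined, depends analytically on $z$, and remains of rank one. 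Setting $\varphi_z=\Pi_z\varphi_{z_0}$ (rescaled by the analytic factor $1/\int(\Pi_z\varphi_{z_0})\d\nu_0$, which is non-vanishing near $z_0$) produces the promised analytic eigenfunction, and $\Lambda_z$ is recovered analytically from $\mathcal L_z\varphi_z=\Lambda_z\varphi_z$, e.g.\ by $\Lambda_z=\int \mathcal L_z\varphi_z \d\nu_0$.

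For the eigenmeasure $\nu_z$ I would work with the dual family $\mathcal L_z^{\ast}$ on $\mathcal H_\alpha^{\ast}$; the dual spectral projection $\Pi_z^{\ast}$ is again analytic and rank one, and its range is spanned by an analytic functional which, by positivity at $z=z_0$ and the Riesz representation theorem, is realized by a Borel measure. Normalizing by $\int \varphi_z \d\nu_z=1$ gives the measure-valued map $z\mapsto \nu_z$, analytic in the weak-$\star$ sense because $z\mapsto \int g\,\d\nu_z=\langle \Pi_z^{\ast}\nu_{z_0},g\rangle/(\text{analytic normalization})$ is analytic for every $g\in\mathcal H_\alpha$, hence (by density) for every $g\in C(\Omega)$.

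Finally, the uniform spectral gap statement follows from upper semicontinuity of the spectrum: for $z^{\ast}\in\mathbb R^d$, Ruelle gives a gap of some width at $z^{\ast}$, and for any prescribed $\delta>0$ one shrinks $\epsilon$ so that every spectral point of $\mathcal L_z$ inside $\gamma$ coincides with $\Lambda_z$ while all other spectral points are displaced by at most some small amount from $\mathrm{spec}(\mathcal L_{z^{\ast}})\setminus\{\Lambda_{z^{\ast}}\}$, ensuring they lie inside $\{|\lambda|\le \Lambda_{z^{\ast}}-\delta\}$. The main obstacle I anticipate is bookkeeping rather than conceptual: one must verify that the contour-integral construction really lands in the correct Banach space (quasi-compactness of $\mathcal L_z$ on $\mathcal H_\alpha$ must persist in a complex neighborhood, which uses the Lasota--Yorke inequality with constants depending continuously on $z$), and one must translate analyticity in the operator-norm topology into the weak-$\star$ analyticity required for the measure-valued statement. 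Everything else is a direct application of Kato's analytic perturbation machinery.
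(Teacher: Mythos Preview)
The paper does not supply its own proof of this proposition: it is quoted verbatim from Lalley \cite{Lal1}, Proposition~4, and immediately afterwards the text moves on to consequences. So there is no ``paper's proof'' to compare against beyond the reference itself.

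Your outline is the standard argument and is essentially what Lalley does: holomorphy of $z\mapsto\mathcal L_z$ on the H\"older space, the Ruelle--Perron--Frobenius spectral gap at the real base point, and Kato's analytic perturbation via the Riesz spectral projection to propagate the simple leading eigenvalue, eigenfunction, and (by duality) eigenfunctional to a complex neighborhood, with upper semicontinuity of the spectrum giving the uniform gap. One small point worth tightening: for the weak-$\star$ analyticity of $z\mapsto\nu_z$ you invoke density of $\mathcal H_\alpha$ in $C(\Omega)$ to pass from H\"older test functions to continuous ones, but density alone does not transport analyticity of $z\mapsto\int g\,\d\nu_z$; you also need a locally uniform (in $z$) bound on the total variation of $\nu_z$, which follows from the analyticity (hence local boundedness) of the normalization factor and of $\Pi_z^\ast$ in operator norm. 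With that caveat your sketch is correct.
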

     
 Note that weak-$\star$-analytic means that for each H\"older continuous function $g$ on $\Omega$ the map $z \mapsto \int g \d\nu_z$ is analytic.  
 
 For $g:\Theta\to \mathbb R$ and $t\in \overline \Theta$ write
 \begin{eqnarray*}
 && \widetilde g^t= g-\int g \d\mu_t \quad \mbox{\rm (compare Section \ref{sec:2.2})}\\
 && D_i[g(s)](t)= \frac{\partial g(s)}{\partial s_i}\Big|_{s=t}\qquad g\ \mbox{differentiable and }\ 1\le i\le d\\
 && D_{ij}[g(s)](t)= \frac{\partial^2 g(s)}{\partial s_i\partial s_j}\Big|_{s=t}\quad g\ \mbox{twice differentiable,} \ 1\le i,j\le d 
 \end{eqnarray*}
 
 Much more is known about the derivatives of the maps in Proposition \ref{prop:3.1}. For convenience we include a short proof as well
 \begin{lemma}\label{lem:3.2} The map $s\mapsto P(F_s)$, $s\in \Theta$, is analytic and satisfies:
 \begin{enumerate} 
 \item  Its first partial derivatives are
 \begin{equation*}
  D_i[P(F_s)](t) = \int f_i\d\mu_{t},\qquad 1\le i\le d.
  \end{equation*}
\item Its second partial derivatives are
 \begin{equation*}
 D_{ij}[P(F_s)](t)= \lim_{n\to\infty}\frac 1n \int S_n\widetilde  f_i^t S_n\widetilde f_j^t \d\mu_t.
 \end{equation*}
 \item The third order partial derivatives of $s\mapsto P(F_s)$ are bounded.
 \end{enumerate}
 \end{lemma}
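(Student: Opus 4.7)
The plan is to deduce all three claims from the perturbation theory in Proposition~\ref{prop:3.1}. Analyticity is immediate: Proposition~\ref{prop:3.1} extends $z\mapsto \Lambda_z$ to a holomorphic function on a complex neighborhood of every point of $\Theta$ with $\Lambda_s>0$ for real $s$, so $P(F_s)=\log \Lambda_s$ is real-analytic on $\Theta$. For claim~(1), I would differentiate the eigenfunction relation $\mathcal L_s\varphi_s=\Lambda_s\varphi_s$ in the variable $s_i$ using the identity $\partial_i\mathcal L_s g=\mathcal L_s(f_i g)$, and then integrate against $\nu_s$. The property $\mathcal L_s^*\nu_s=\Lambda_s\nu_s$ makes the $\partial_i\varphi_s$ contributions cancel, and after dividing by $\Lambda_s$ and invoking $\mu_s=\varphi_s\nu_s$ with the normalization $\int\varphi_s\d\nu_s=1$ one reads off $\partial_i\Lambda_s=\Lambda_s\int f_i\d\mu_s$, giving $D_i[P(F_s)](t)=\int f_i\d\mu_t$.

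The main work is~(2). Here I would introduce the partition-function proxy $Z_n(s)\coloneqq\int\mathcal L_s^n h\d\rho$ for a fixed positive H\"older function $h$ and a fixed reference probability $\rho$, and exploit the product-rule identities
\[\partial_i\mathcal L_s^n h=\mathcal L_s^n(S_n f_i\cdot h),\qquad \partial_i\partial_j\mathcal L_s^n h=\mathcal L_s^n(S_n f_i\,S_n f_j\,h),\]
which follow by induction from $\partial_i\mathcal L_s g=\mathcal L_s(f_i g)$. The exponential spectral gap in Proposition~\ref{prop:3.1} yields $\tfrac{1}{n}\log Z_n(s)\to P(F_s)$ uniformly on compact subsets of a complex neighborhood of $\Theta$, and Weierstrass's theorem on holomorphic convergence then transports this to the partial derivatives. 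Expanding
\[\tfrac{1}{n}\partial_i\partial_j\log Z_n(s)=\tfrac{1}{n}\frac{\partial_i\partial_j Z_n}{Z_n}-\tfrac{1}{n}\frac{\partial_i Z_n\,\partial_j Z_n}{Z_n^{2}}\]
and applying the spectral decomposition $\mathcal L_s^n g/\Lambda_s^n\to\varphi_s\int g\d\nu_s$ to each ratio shows that the subtraction performs a leading-order cancellation whose remainder is $\tfrac{1}{n}\int S_n\widetilde f_i^{\,s}\,S_n\widetilde f_j^{\,s}\d\mu_s$ up to errors decaying exponentially in $n$; this is precisely the asymptotic covariance claimed.

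Claim~(3) is then a soft corollary: analyticity of $s\mapsto P(F_s)$ together with Cauchy's integral formula on a closed polydisc contained in the complex domain of holomorphy give local boundedness of all partial derivatives, in particular of those of order three. I expect the main obstacle to lie in~(2): the observables $S_n f_i$ are not H\"older with $n$-independent constants, so the spectral gap must be applied to the rescaled observables $n^{-1}S_n f_i$, and tracking the leading cancellation between the two ratios in $\partial_i\partial_j\log Z_n$ requires care. The uniform spectral gap of Proposition~\ref{prop:3.1} combined with Weierstrass's convergence theorem is what ultimately justifies the interchange of limit and derivative.
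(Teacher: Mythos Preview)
Your proposal is correct. Parts~(1) and~(3) match the paper's argument essentially verbatim: differentiate the eigenfunction relation and integrate against $\nu_s$ for~(1), and invoke analyticity plus Cauchy's estimate for~(3).

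For part~(2) you take a genuinely different route. The paper stays with the \emph{exact} eigenvalue identity $\mathcal L_s^n\varphi_s=\Lambda_s^n\varphi_s$, differentiates it twice in $s$ at $s=t$, and integrates against $\nu_t$; because the eigenfunction $\varphi_t$ sits inside the differentiated expression, the leading term comes out directly as $\int S_nf_iS_nf_j\,\varphi_t\d\nu_t=\int S_nf_iS_nf_j\d\mu_t$, and dividing by $n$ and letting $n\to\infty$ (using the ergodic theorem to kill the lower-order pieces) gives the asymptotic covariance with no appeal to complex-analytic convergence theorems. Your approach instead works with a free-energy proxy $\tfrac{1}{n}\log Z_n(s)$ for generic $h,\rho$ and uses Weierstrass to transport convergence to second derivatives; this is the Parry--Pollicott style argument. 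What the paper's route buys is that one never has to justify uniform holomorphic convergence of $\tfrac{1}{n}\log Z_n(z)$ on a complex neighborhood (which, as you anticipate, involves handling the branch of the logarithm and uniform spectral gap estimates in $z$). What your route buys is conceptual transparency: the subtraction in $\partial_i\partial_j\log Z_n$ is manifestly the empirical covariance, and Weierstrass disposes of the limit interchange in one stroke. One small point to tighten in your write-up: when you apply the spectral decomposition to the ratios $\partial_i Z_n/Z_n$ and $\partial_i\partial_j Z_n/Z_n$, the reference measure that emerges is $\nu_s$ weighted by $h$, not $\mu_s$; you need the additional observation that $\Lambda_s^{-k}\mathcal L_s^k h\to\varphi_s\int h\d\nu_s$ to convert $\int(f_i\circ\sigma^k)h\d\nu_s$ into $\int f_i\d\mu_s\cdot\int h\d\nu_s$ asymptotically, which is what makes the $h,\rho$-dependence disappear in the limit.
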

 
 \begin{proof} 1. Let $\hat s_i=(0,\ldots ,0,s_{i},0,\ldots 0)\in \mathbb C^d$ with $i$-th coordinate equal to $s_{i}$.  Differentiating each side of the equality 
 \[ \mathcal L_{t+\hat s_i} \varphi_{t+\hat s_i} = \Lambda_{t+\hat s_i}\varphi_{t+\hat s_i}\]
with respect to  $s_{i}$ and evaluating at $s_{i}=0$ yields
 \[ \frac{d}{ds_{i}}[\mathcal L_{t+\hat s_i} \varphi_{t+\hat s_i}](0)= \mathcal L_t\left(\frac{d}{ds_{i}}[\varphi_{t+\hat s_i}](0) +\varphi_{t} f_i\right)\]
 and 
 \[ \frac{d}{ds_{i}}[\e^{P(F_{t+\hat s_i})}\varphi_{t+\hat s_i}](0)= \e^{P(F_t)}\left(\frac{d}{ds_{i}}[P(F_{t+\hat s_i})](0) \varphi_t+ \frac{d}{ds_{i}}[\varphi_{t+\hat s_i}](0)\right).\]
 Equating, multiplying by $\Lambda_t^{-1}$ and integration with $\nu_t$ implies
 \[ \int \frac{d}{ds_{i}}[\varphi_{t+\hat s_i}](0) +\varphi_{t} f_i \d\nu_t=  \int
\frac{d}{ds_{i}}[P(F_{t+\hat s_i})](0) \varphi_t+  \frac{d}{ds_{i}}[\varphi_{t+\hat s_i}](0) \d\nu_t.\]
 This proves the first part.
 
 2. In this case we use $\mathcal L_{t+\hat s_i+\hat s_j}^n\varphi_{t+\hat s_i+\hat s_j} = \Lambda_{t+\hat s_i+\hat s_j}^n \varphi_{t+\hat s_i+\hat s_j}$ to obtain
  \begin{eqnarray*}
  && D_{ij}[\mathcal L_{t+\hat s_i+\hat s_j}^n \varphi_{t+\hat s_i+\hat s_j}](0)= \sum_{k\cdot\in \Omega} D_{ij}[ \varphi_{t+\hat s_i+\hat s_j}(k\cdot) \e^{S_n F_{t+\hat s_i+\hat s_j}(k\cdot)}](0)
 = \\
   && \mathcal L_t^n\left[ D_{ij}[\varphi_{t+\hat s_i+\hat s_j}](0)+D_i[\varphi_{t+\hat s_i}](0) S_nf_j + D_{j}[\varphi_{t+\hat s_j}](0) S_n f_i + S_nf_i S_nf_j\varphi_t\right]
   \end{eqnarray*}
 and 
 \begin{eqnarray*}
 && D_{ij}[\e^{nP(F_{t+\hat s_i+\hat s_j})}\varphi_{t+\hat s_i+\hat s_j}](0)= \e^{nP(F_t)}\{nD_{ij}[P(F_{t+\hat s_i +\hat s_j})](0) \varphi_t \\
 &&\quad + n^2\int f_i d\mu_t\int f_j \d\mu_t \varphi_t + n D_i[\varphi_{t+s_i+s_j}](0) \int f_j \d\mu_t\\
 && \quad+n  \int f_i \d\mu_t D_j[\varphi_{t+s_i+s_j}](0) + D_{ij}[ \varphi_{t+\hat s_i+\hat s_j}](0)\}.
 \end{eqnarray*}
 Integrating with respect to $\mu_t$, using the ergodic theorem, writing $\mu_t(f_i)=\int f_i \d\mu_t$ and letting $n\to\infty$ it follows that
 \begin{eqnarray*}
 D_{ij}[P(F_{t+\hat s_i+\hat s_j})](0)&=&\lim_{n\to\infty}
 \frac 1n \int S_nf_iS_nf_j \d\mu_t - n \mu_t(f_i)\mu_t(f_j)\\
 &=& \lim_{n\to\infty} \frac 1n \int S_n\widetilde f_i^t S_n\widetilde f_j^t \d\mu_t.
 \end{eqnarray*}
 The convergence of the last expression is well known and called the asymptotic covariance of $f_i$ and $f_j$ with respect to $\mu_{t}$ (see Section \ref{sec:2.2}).
 
 3. The maps $z\mapsto \frac 1n \int S_n\widetilde f_i^z  S_n\widetilde f_j^z \d\mu_z$ are bounded and analytic, where we use integration by $\mu_z$ as a notation for the linear functional extending $\int\,\cdot \, \d\mu_t$ analytically. Hence, by the Cauchy  differentiation formula, the partial derivatives of $D_{ij}[P(F_z)]$ are bounded in some sufficiently small neighborhood $U(t_1,\ldots ,t_d)$. 
 \end{proof}
 
 \begin{lemma}\label{lem:3.3} For any cylinder $C=[c_0,\ldots ,c_{n-1}]$ the map $\mathbb C\ni z\mapsto \nu_z(C)$ is analytic such that
  \begin{enumerate}
 \item its partial derivatives are given by
 \begin{equation}\label{eq:3.3}
  D_i[\nu_s(C)](t)=  \int_C (-n \mu_t(f_i)+S_nf_i(x)) \d\nu_t(x) + O(\nu_t(C))\qquad 1\le i\le d,
  \end{equation}
 \item and its second order partial derivatives are for $1\le i,j\le d$
 \begin{equation}\label{eq:3.4}
  D_{ij}[\nu_s(C)](t) =  \int_C S_n\widetilde f_i^t S_n\widetilde f_j^t \d\nu_t  -  \nu_t(C) \int S_n\widetilde f_i^t S_n\widetilde f_j^t \d\mu_t +o(n\nu_t(C))  
  \end{equation}
    \item Higher order partial derivatives are bounded.
  \end{enumerate}
 \end{lemma}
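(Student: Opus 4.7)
The plan is to combine the representation from equation (3.2),
\[
\nu_u(C) \;=\; \e^{-nP(F_u)}\int H_u(x)\,\d\nu_u(x), \qquad H_u(x):=\e^{S_n F_u(cx)},
\]
with the weak-$\star$ analyticity from Proposition~\ref{prop:3.1}, the pressure derivatives of Lemma~\ref{lem:3.2}, and the Gibbs bound (\ref{eq:3.1}). The main device is to fix $z\in C$ and work with the auxiliary factor
\[
\phi_u(C):=\nu_u(C)\,\e^{nP(F_u)-S_n F_u(z)},\qquad\text{so that}\qquad \nu_u(C)=\phi_u(C)\,\e^{-nP(F_u)+S_n F_u(z)}.
\]
By (\ref{eq:3.1}), $\phi_u(C)$ is bounded above and below uniformly in $n$, $C$, $z$. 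Proposition~\ref{prop:3.1} together with the analytic perturbation theory for $\mathcal L_u$ yields an analytic extension of $\phi_u(C)$ to a fixed complex neighborhood of $t$, still uniformly bounded; Cauchy estimates then supply uniform bounds on every partial derivative of $\phi_u(C)$ at $u=t$, independent of $n$ and $C$.

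For part~(1), differentiate the factored expression once at $u=t$. Lemma~\ref{lem:3.2}(1) turns the derivative of the exponential factor into the main term $\nu_t(C)\bigl(-n\mu_t(f_i)+S_n f_i(z)\bigr)$, while the $\phi$-factor contributes $\partial_i\phi_t(C)\cdot\e^{-nP(F_t)+S_n F_t(z)}=O(\nu_t(C))$. The H\"older estimate $\sup_{y\in C}|S_n f_i(y)-S_n f_i(z)|=O(1)$, obtained by summing a geometric series, then replaces $\nu_t(C)\,S_n f_i(z)$ by $\int_C S_n f_i\,\d\nu_t$ at cost $O(\nu_t(C))$, which gives (\ref{eq:3.3}).

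For part~(2), the cleanest route is to differentiate the first-derivative identity (\ref{eq:3.3}) in $u_j$. Writing $\partial_i\nu_u(C)=\int_C S_n\widetilde f_i^u\,\d\nu_u+R^{(1)}_i(u,C)$ with $R^{(1)}_i=O(\nu_u(C))$ and applying $\partial_j$ at $u=t$: by Lemma~\ref{lem:3.2}(2) the derivative of $S_n\widetilde f_i^u$ in $u_j$ contributes $-n\Cov_{\mu_t}(f_i,f_j)\nu_t(C)$; differentiating $\nu_u$ against the fixed H\"older function $S_n\widetilde f_i^t$ through the transfer-operator identity used in part~(1) produces $\int_C S_n\widetilde f_i^t S_n\widetilde f_j^t\,\d\nu_t$ plus a remainder of the same type as $R^{(1)}$; and $\partial_j R^{(1)}_i$ is controlled by one more application of the $\phi_u$-bound. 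Finally, the standard Gibbs asymptotic
\[
\int S_n\widetilde f_i^t\,S_n\widetilde f_j^t\,\d\mu_t \;=\; n\,\Cov_{\mu_t}(f_i,f_j)+O(1),
\]
valid by exponential decay of correlations for Gibbs measures, exchanges $n\Cov_{\mu_t}(f_i,f_j)\nu_t(C)$ for $\nu_t(C)\int S_n\widetilde f_i^t S_n\widetilde f_j^t\,\d\mu_t$ at cost $O(\nu_t(C))\subset o(n\nu_t(C))$, producing (\ref{eq:3.4}). Part~(3) is then immediate: uniform Cauchy bounds on all $u$-derivatives of $\phi_u(C)$ at $t$, combined with the smoothness of $u\mapsto P(F_u)$ from Lemma~\ref{lem:3.2}, imply that every higher-order partial derivative of $\nu_u(C)$ is a sum of products of bounded quantities and of the explicit polynomial-in-$n$ derivatives of $\e^{-nP(F_u)+S_n F_u(z)}$.

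The main obstacle I anticipate is achieving the sharp $o(n\,\nu_t(C))$ remainder in~(2). In a naive twice-applied product rule the cross-terms of the form $\partial_i\phi_t\cdot\partial_j[\e^{-nP(F_u)+S_n F_u(z)}]$ are only controlled by $O(\nu_t(C)\,|S_n\widetilde f_k^t(z)|)$, which can be of order $n\,\nu_t(C)$ for an atypical $z\in C$. Iterating (\ref{eq:3.3}) rather than expanding (\ref{eq:3.2}) twice is precisely what avoids ever introducing a point $z$ in the dominant terms, so that only the uniform $O(\nu_u(C))$ remainders from the first-derivative identity propagate and the total error is forced to stay in $o(n\,\nu_t(C))$.
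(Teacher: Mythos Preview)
Your factorization $\nu_u(C)=\phi_u(C)\,E_u(z)$ with $E_u(z)=\e^{-nP(F_u)+S_nF_u(z)}$ is a legitimate variant of the paper's decomposition, and your justification that $\phi_u(C)=\int \e^{S_nF_u(cx)-S_nF_u(z)}\,\d\nu_u(x)$ extends analytically with uniform bounds (via the H\"older estimate on the exponent, summed as a geometric series) is correct. Part~(1) then goes through as you describe and coincides with the paper's argument up to this repackaging; the paper instead keeps the kernel $\e^{-nP(F_t)+S_nF_t(cx)}$ frozen at $t$ and applies the Cauchy estimate directly to the weak-$\star$ analytic map $s\mapsto\int(\cdot)\,\d\nu_s$, but the outcome is the same.

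The genuine gap is in part~(2). Your claim that iterating (\ref{eq:3.3}) rather than expanding (\ref{eq:3.2}) twice ``avoids ever introducing a point $z$ in the dominant terms'' is not correct. When you differentiate $R^{(1)}_i(u,C)$ in $u_j$, the piece $\partial_i\phi_u(C)\cdot E_u(z)$ produces the cross-term $\partial_i\phi_t(C)\cdot E_t(z)\cdot S_n\widetilde f_j^t(z)$, which is of order $|S_n\widetilde f_j^t(z)|\,\nu_t(C)$, not $O(\nu_t(C))$. Likewise, when you differentiate $\int_C S_n\widetilde f_i^t\,\d\nu_u$ in $u_j$, the Cauchy-type remainder involves an integrand whose supremum on $C$ is of order $|S_n\widetilde f_i^t(z)|$, so that remainder is only $O(|S_n\widetilde f_i^t(z)|\,\nu_t(C))$ and not ``of the same type as $R^{(1)}$''. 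Every route thus produces cross-terms of order $\bigl(|S_n\widetilde f_i^t(z)|+|S_n\widetilde f_j^t(z)|\bigr)\,\nu_t(C)$, which for an arbitrary cylinder of length $n$ can be as large as $n\,\nu_t(C)$.

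The paper closes this gap by invoking the ergodic theorem: for a $\mu_t$-typical cylinder $C=[X_0,\ldots,X_{n-1}]$ one has $n^{-1}S_n\widetilde f_k^t\to 0$ a.s., so these cross-terms are a.s.\ $o(n\,\nu_t(C))$. Concretely, the paper isolates the remainder
\[
D_j\Bigl[\int S_n\widetilde f_i^t(cx)\,\e^{-nP(F_t)+S_nF_t(cx)}\,\d\nu_s(x)\Bigr](t),
\]
bounds the underlying analytic function of $s$ (divided by $n\nu_t(C)$) by $n^{-1}\sup_{y\in C}|S_n\widetilde f_i^t(y)|\cdot D(t)\cdot\|\nu_s\|$, applies the Cauchy estimate, and then observes that $n^{-1}\sup_{y\in C}|S_n\widetilde f_i^t(y)|\to 0$ a.s.\ by ergodicity. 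Hence the $o(n\,\nu_t(C))$ in (\ref{eq:3.4}) is an almost-sure statement for typical cylinders, not a uniform one, and cannot be obtained from the $\phi_u$-Cauchy bounds alone. Your argument becomes correct once you insert this ergodic step in place of the claim that the remainders stay $O(\nu_u(C))$.
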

 
 \begin{proof} Analyticity follows from Proposition  \ref{prop:3.1}.
 \begin{enumerate}
 \item  Since $\nu_s(C)=\int \e^{-nP(F_s)+S_nF_s(cx)} \d\nu_s(x)$, for fixed $t\in \mathbb R^d$
 \begin{eqnarray*}
  D_i[\nu_s(C)](t)&=& \int (-n \mu_t(f_i)+S_nf_i(cx))\e^{-nP(F_t)+S_nF_t(cx)} \d\nu_t(x)\\
 && \qquad\qquad\qquad \qquad+ D_i[ \int \e^{-nP(F_t)+S_nF_t(cx)} \d\nu_s(x)](t)\\
  &=& \int_C (-n \mu_t(f_i)+S_nf_i(x)) \d\nu_t(x) \\
  && \qquad\qquad\qquad \qquad+ D_i[ \int \e^{-nP(F_t)+S_nF_t(cx)} \d\nu_s(x)](t).
  \end{eqnarray*}
  We have by (\ref{eq:3.1}) and some constant $0<D(t)<\infty$
 \[ D(t)^{-1} \le \sup_{x\in \Omega} \e^{-nP(F_t)+S_nF_t(cx)}\frac 1{\nu_t(C)}  \le D(t), \] 
 and hence  for $z$ in some neighbourhood of $t$ also
 \[ \left| \int   \e^{-nP(F_t)+S_nF_t(cx)} \d\nu_z(x)\right|\le D(t)\nu_t(C) \| \nu_z\|<\infty,\] 
 where by Proposition \ref{prop:3.1} $\nu_z$ stands for the analytic extension of $\nu_s$, considered as a linear operator. By the Cauchy differentiation formula we then obtain that
 \[ \left| \frac{\partial}{\partial z_i}  \int \e^{-nP(F_t)+S_nF_t(cx)} \d\nu_z(x)\big|_{z=t}\right| \le K\nu_t(C)\] 
 for some constant $K<\infty$ (see the proof of   Proposition   4.4  in \cite{Chu} for this essential detail). 
 
\item This argument - applied to the partial derivative -  also shows that the second partial derivative
 \[ D_{ij}\left(\int \e^{-nP(F_t)+S_nF_t(cx)} \d\nu_s(x)\right)(t)\] 
 is bounded. Therefore, by Lemma \ref{lem:3.2} and using  (\ref{eq:3.3}) and (\ref{eq:3.4}),  
 \begin{eqnarray*}
 &&D_{ij}[\nu_s(C)](t)\\
 &&= D_j\left[\int (-n \mu_s(f_i)+S_nf_i(cx))\e^{-nP(F_s)+S_nF_s(cx)} \d\nu_s(x)\right](t) + O(1) \\
&&= \int S_n\widetilde f_i^t(cx)S_n\widetilde f_j^t(cx) \e^{-nP(F_t)+S_nF_t(cx)} \d\nu_t(x) \\
&& \quad -  n D_{ij}\left[P(F_s)\right](t) \int \e^{-nP(F_t)+S_nF_t(cx)} \d\nu_t(x)\\
&&\quad +D_j\left[\int -S_n\widetilde f_i^t(cx) \e^{-nP(F_t)+S_nF_t(cx)} \d\nu_s(x)\right](t) +O(1)\\
&&=\int_C S_n\widetilde f_i^tS_n\widetilde f_j^t  \d\nu_t -  n D_{ij}\left[P(F_s)\right](t) \nu_t(C)+ o(n\nu_t(C))+O(\nu_t(C)),
\end{eqnarray*}
since by (\ref{eq:3.1})
\[\frac 1{n\nu_t(C)} \int -S_n\widetilde f_i^t(cx) \e^{-nP(F_t)+S_nF_t(cx)} \d\nu_z(x) = o\left(\frac{   \nu_t(C) \|\nu_z\|}{n\nu_t(C)}\right)\] 
 is bounded in some neighborhood of $z=t$  the partial derivative in question is bounded; moreover, the bound of this function tends to zero a.\,s.\@ by the ergodic theorem  as $n\to\infty$, hence the derivative in question   is a.\,s.\@ $o(n\nu_t(C))$.

\item Similar as the proof of 3. in Lemma \ref{lem:3.2}.
\end{enumerate}
 \end{proof}
 
 We continue with  a result, proved  in case $d=1$ in \cite{Chu}.
 
\begin{lemma} \label{lem:3.4} Let $C=[c_0,\ldots ,c_{n-1}]$ be a cylinder of length $n$. The function 
\[ z \in \mathbb R^d\mapsto \log \nu_z(C)\] 
satisfies for every $t\in \mathbb R^d$ and $1\le i\le d$
\[ D_i[\log \nu_s(C)](t)= \frac 1{\nu_t(C)} \int_C S_n\widetilde f_i^t \d\nu_t  + O(1).\] 
\end{lemma}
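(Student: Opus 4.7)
The statement follows almost immediately from Lemma \ref{lem:3.3} together with the chain rule for the logarithm, so my plan is to reduce to that lemma and then recognize the integrand.

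First I would apply the chain rule: since $\nu_{s}(C)>0$ in a neighborhood of $t$ (for instance by the Gibbs property \eqref{eq:3.1}) and $s\mapsto \nu_{s}(C)$ is analytic by Proposition \ref{prop:3.1}, one has
\[
D_{i}[\log \nu_{s}(C)](t) \;=\; \frac{1}{\nu_{t}(C)}\, D_{i}[\nu_{s}(C)](t).
\]
Next I would substitute the formula from Lemma \ref{lem:3.3}(1), namely
\[
D_{i}[\nu_{s}(C)](t) \;=\; \int_{C} \bigl(-n\mu_{t}(f_{i}) + S_{n}f_{i}(x)\bigr)\, \d\nu_{t}(x) \;+\; O(\nu_{t}(C)).
\]
Dividing by $\nu_{t}(C)$, the error term $O(\nu_{t}(C))/\nu_{t}(C)$ is $O(1)$, which is precisely the remainder we need.

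Finally I would identify the integrand: by the definition $\widetilde f_{i}^{t}=f_{i}-\mu_{t}(f_{i})$ from Section \ref{sec:2.2}, summing along the orbit gives
\[
S_{n}\widetilde f_{i}^{t}(x) \;=\; S_{n}f_{i}(x) - n\mu_{t}(f_{i}),
\]
so the integral above is exactly $\int_{C} S_{n}\widetilde f_{i}^{t}\,\d\nu_{t}$, which yields
\[
D_{i}[\log \nu_{s}(C)](t) \;=\; \frac{1}{\nu_{t}(C)} \int_{C} S_{n}\widetilde f_{i}^{t}\, \d\nu_{t} \;+\; O(1),
\]
as claimed. There is no real obstacle here: the content of the lemma was already absorbed into Lemma \ref{lem:3.3}, whose proof relied on the Cauchy differentiation formula applied to the weak-$\star$-analytic extension of $\nu_{s}$ together with the Gibbs bound; the only task remaining is the bookkeeping above.
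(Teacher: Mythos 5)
Your proposal is correct and follows essentially the same route as the paper: both reduce the claim to Lemma \ref{lem:3.3}(1) via analyticity of $s\mapsto\nu_s(C)$ and the chain rule for the logarithm, with the $O(\nu_t(C))$ error becoming $O(1)$ after division by $\nu_t(C)$ and the integrand being recognized as $S_n\widetilde f_i^t$. Nothing is missing.
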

\begin{proof}
First note that by Proposition \ref{prop:3.1}  for each cylinder $C=[c_0,c_1,\ldots ,c_{n-1}]$ and each $z_0\in \mathbb R$
\[ z\mapsto\nu_z(C)= \int_\Omega \exp(-nP(F_z)+S_n F_z(cx)) d\nu_z(x)\] 
is analytic in some complex neighborhood of $z_0=t$.

Taking the partial derivative and evaluating at $z=t$ yields by Lemma \ref{lem:3.3} that
\begin{eqnarray*}
&&  D_i\left[\int_\Omega  \exp(-nP(F_s)+S_nF_s(cx)) \d\nu_s(x)\right](t)\\
&&=  \int (S_n\widetilde f_i^t(cx) \exp(-nP(F_t)+S_n F_t(cx)) \d\nu_t(x) + O(\nu_t(C))
\end{eqnarray*}
and hence
\begin{eqnarray*}
&&  D_i[\log \int_\Omega  \exp\{-nP(F_s)+S_n F_s(cx)\} \d\nu_s(x)](t)\\
&&=  \frac 1{\nu_t(C)}\int_C S_n\widetilde f_i^t(x)  \d\nu_t(x) + O(1)
\end{eqnarray*}
   \end{proof}
   
 For completeness we conclude this section with  a more detailed proof of Theorem \ref{theo:2.6}.
   \begin{lemma} \label{lem:3.5} Let $\mu$ be an invariant Gibbs measure.  The family of H\"older potentials $(f_{1},\ldots , f_{d})$ is independent as cohomology classes if and only if the corresponding  covariance matrix $\Sigma^{\mu}$ is positive definite. 
\end{lemma}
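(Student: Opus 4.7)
The plan is to reduce the $d$-dimensional statement to a scalar cohomology claim about the asymptotic variance of a single H\"older observable, and then to invoke a Gordin-type martingale decomposition. I would first use that $\Cov_{\mu}(\cdot,\cdot)$ is a well-defined (by the $\psi$-mixing property of Gibbs measures on mixing subshifts of finite type) bilinear symmetric form on H\"older functions and that $\Sigma^{\mu}$ is automatically positive semi-definite as the limit of the genuine covariance matrices $\tfrac{1}{n}\int (S_n\tilde{F}^{\mu})^{t}(S_n\tilde{F}^{\mu})\,\d\mu$. Consequently, for any $s\in\R^d$,
\[
s\,\Sigma^{\mu} s^t \;=\; \sum_{i,j=1}^d s_i s_j\Cov_{\mu}(f_i,f_j) \;=\; \Cov_{\mu}(f_s,f_s),\qquad f_s \coloneqq \sum_{i=1}^d s_i f_i,
\]
so positive definiteness of $\Sigma^{\mu}$ is equivalent to $\Cov_{\mu}(f_s,f_s)>0$ for every $s\neq 0$. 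The lemma thus reduces to proving, for a single H\"older observable $g$ (namely $g=f_s$), the scalar equivalence $\Cov_{\mu}(g,g)=0\;\Longleftrightarrow\;g\sim 0$.

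The implication ``$\Leftarrow$'' is routine: if $g\sim 0$, write $g=c+h-h\circ\sigma$ with $h$ continuous; shift-invariance of $\mu$ forces $c=\mu(g)$, so $S_n\tilde{g}^{\mu}=h-h\circ\sigma^n$ telescopes and is uniformly bounded by $2\|h\|_\infty$, whence $\Cov_{\mu}(g,g)\le \lim_{n\to\infty} 4\|h\|_\infty^2/n=0$.

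The substantive direction is ``$\Rightarrow$'', which I would establish via a Gordin-type decomposition based on the normalized transfer operator $\tilde{\mathcal{L}}_{\mu}$ associated with $\mu$ (introduced in Section~\ref{sec:3.1}). Its spectral gap on zero-$\mu$-mean H\"older functions, inherited from Proposition~\ref{prop:3.1}, yields exponential decay $\|\tilde{\mathcal{L}}_{\mu}^{k}\tilde{g}^{\mu}\|_{\alpha}\le C\rho^{k}$ for some $\rho\in(0,1)$, so that $\phi\coloneqq \sum_{k\ge 1}\tilde{\mathcal{L}}_{\mu}^{k}\tilde{g}^{\mu}$ converges in the H\"older norm. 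Setting $m\coloneqq \tilde{g}^{\mu}+\phi-\phi\circ\sigma$, a direct computation using the identity $\tilde{\mathcal{L}}_{\mu}(u\circ\sigma)=u$ gives $\tilde{\mathcal{L}}_{\mu} m=0$, i.\,e.\@ $m$ is a reverse-martingale difference with respect to the decreasing filtration generated by the iterates of $\sigma$. Telescoping then produces $S_n\tilde{g}^{\mu}=S_n m+\phi\circ\sigma^n-\phi$, and orthogonality of the martingale increments together with the uniform boundedness of $\phi$ yields
\[
\Cov_{\mu}(g,g)\;=\;\lim_{n\to\infty}\frac{1}{n}\int\bigl(S_n\tilde{g}^{\mu}\bigr)^{2}\,\d\mu\;=\;\int m^{2}\,\d\mu.
\]
The hypothesis therefore forces $m\equiv 0$ $\mu$-a.\,s., and since $\mu$ has full topological support on $\Omega$ and all functions involved are H\"older continuous, the identity $\tilde{g}^{\mu}=\phi\circ\sigma-\phi$ holds pointwise, so $g=\mu(g)+(-\phi)-(-\phi)\circ\sigma$, which is exactly $g\sim 0$. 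This cohomological step is the main obstacle; alternatively it may be invoked from Proposition~4.12 in \cite{PP}. Combined with the reduction above, this completes the proof.
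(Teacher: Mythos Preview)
Your proof is correct and follows essentially the same route as the paper: reduce to the scalar identity $s\Sigma^{\mu}s^{t}=\Cov_{\mu}(f_{s},f_{s})$, handle the easy direction by telescoping, and for the substantive direction invoke the spectral gap of the normalized transfer operator to produce a coboundary decomposition whose martingale part has $L^{2}$-norm equal to the asymptotic variance. The paper's version is terser---it simply appeals to ``spectral gap properties'' without writing out the Gordin decomposition---whereas you make the construction of $\phi$ and $m$ explicit, which is arguably clearer.
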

\begin{proof}
For $s\in \R^{d}\setminus \{0\}$ consider $f_{s}= \sum s_{i}f_{i}$ and suppose it is cohomologous to $0$, i.\,e.\@ there exists a continuous function $h$ such that   $f_{s}=c+h-h\circ \sigma$. It follows that $\int f_{s} \d\mu=c$ and therefore 
\begin{eqnarray*} s\Sigma^{\mu } s^{t}&=&\lim 1/n \int S_{n}(f_{s}-\mu(f_{s}))S_{n}((f_{s}-\mu(f_{s})) \d\mu\\
&=& \lim 1/n\int S_{n}(h-h\circ \sigma)S_{n}(h-h\circ \sigma)\d\mu\\
&=&\lim 1/n\int (h-h\circ \sigma^{n})^{2 }\d\mu =0.\end{eqnarray*}
 Therefore, the covariance matrix is not positive definite. 

On the other hand, if the covariance matrix is not positive definite, then there exists $s\in \R^{d}\setminus \{0\}$ such that $s\Sigma^{\mu } s^{t}=0$. Using the spectral gap properties of the Perron-Frobenius operator with respect to the normalized potential for the Gibbs measure $\mu$, we find that for $\tilde f^{\mu}_{s}$  there is another H\"older continuous function $h$ such that $\tilde f^{\mu}_{s}=h-h\circ\sigma$ and   $s\Sigma^{\mu } s^{t}=\Cov_{\mu}(f_{s})=\int h^{2} \d\mu$. So if this variance vanishes, then  $h=0$ and $f_{s}$ is cohomologous to $0$. 
   \end{proof}

  \subsection{Multivariate statistical calculus for Gibbs measures}\label{sec:3.3}
  
  Denote by $\mathcal F$ the Borel $\sigma$-field on $\Omega$ and let $\mu$ be a   shift-invariant  Gibbs measure on $\mathcal F$. Let $\mathcal F_k^l$ be the $\sigma$-field generated by $\{X_j: k\le j <l\}$ for  $0\le k < l\le  \infty$, and  let $g_i$, $0\le i\le d$, be finitely many H\"older continuous functions. Hence, for some $\rho<1$, 
  \[ |g_i(x)-g_i(y)|\le H_i \rho^n\qquad 0\le i\le d, \ x_j=y_j \ (0\le j<n),\] 
  where $H_i$ denotes the H\"older-norm of $g_i$. Define
  \[   g_i^n(x) = \frac1{\mu([x_0,\ldots ,x_{n-1}])}\int_{[x_0,,,.x_{n-1}]} g_i(y) \d\mu(y).\] 
  Then $|g_i(x)-g_i^n(x)|\le H_i\rho^n$.
  
 Recall that a Gibbs measure $\mu$ on a topologically mixing subshift of finite type is $\psi$-mixing with exponentially fast decaying mixing coefficients $\psi(n)$, $n\ge 1$, that is
  \begin{equation}\label{eq:3.5} \sup_{A\in \mathcal F_0^k; B\in \mathcal F_{k+n}^\infty}\left| \frac{\mu(A\cap B)-\mu(A)\mu(B)}{\mu(A)\mu(B)}\right|\le \psi(n)
  \end{equation}
  and $ \psi(n)\le K \kappa^n$ for some constants $K>0$ and $0<\kappa <1$.
  
As before, we shall write $\mu(g):=\int g \d\mu$.  We first recall some known facts adapted to Gibbs measures, beginning with covariances of functions  under $\mu$ compared to covariances of the same functions under product measures. This is essentially from \cite{DK1}.
 
 \begin{lemma}\label{lem:3.6} Let $\mu$ be a Gibbs measure with mixing coefficients $\psi(n)$. Assume that $0\le k_1<k_3$, $0\le k_2$ and $k_2\vee k_3\le k_4$. Define $n= \min\{ |k_j-k_i|: 1\le i\ne j \le 4\}$. Writing $n=p+q$, then for $A_i \in\mathcal F_{k_i}^{k_i+p}$, $1\le i\le 3$ and $A_4\in \mathcal F_{k_4}^\infty$
 \[ \frac{|\mu(A_1\cap A_3\cap A_2\cap A_4)- \mu(A_1\cap A_3)\mu(A_2\cap A_4)|}{\mu(A_1\cap A_3)\mu(A_2\cap A_4)} = O(\psi(q)).\] 
 \end{lemma}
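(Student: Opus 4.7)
The plan is to reduce everything to the four-fold product $\mu(A_1)\mu(A_2)\mu(A_3)\mu(A_4)$, using the $\psi$-mixing bound \eqref{eq:3.5} as the sole quantitative input. The pairing $A_1\cap A_3$ versus $A_2\cap A_4$ is not a split of the time axis (since the indices $k_2$ and $k_3$ may interleave), so one cannot hope to apply $\psi$-mixing directly to this pair; the trick is to pass through $\prod_i \mu(A_i)$ as an intermediate product and recombine.

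First I would note that, by shift invariance of $\mu$, the bound \eqref{eq:3.5} translates into the following: whenever $U\in\mathcal F_{j_1}^{j_2}$ and $V\in\mathcal F_{j_3}^{j_4}$ with $j_3\ge j_2+m$, one has $\mu(U\cap V)=\mu(U)\mu(V)(1+\vartheta\,\psi(m))$ for some $|\vartheta|\le 1$. Since each $A_i$ with $1\le i\le 3$ lives on a window of width $p$ and $A_4$ on the tail $[k_4,\infty)$, while $\min_{i\ne j}|k_i-k_j|\ge n=p+q$, any two of these windows are separated by at least $q$ lattice sites, so every pairwise application of $\psi$-mixing yields a factor $1+O(\psi(q))$.

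Next I would order the indices. Because $k_4\ge k_2\vee k_3>k_1$ and the $k_i$ are pairwise at distance $\ge n$, $k_4$ is strictly the largest; let $\pi$ be a permutation of $\{1,2,3\}$ giving $k_{\pi(1)}<k_{\pi(2)}<k_{\pi(3)}<k_4$. Then iterating $\psi$-mixing along this ordering,
\[
\mu\!\Bigl(\bigcap_{i=1}^{4}A_i\Bigr)=\mu(A_{\pi(1)})\,\mu(A_{\pi(2)})\,\mu(A_{\pi(3)})\,\mu(A_{4})\,(1+\vartheta_1\psi(q))(1+\vartheta_2\psi(q))(1+\vartheta_3\psi(q)),
\]
so $\mu(\bigcap_i A_i)=\prod_{i=1}^4\mu(A_i)\,(1+O(\psi(q)))$, the implicit constant being absolute as long as $\psi(q)\le 1/2$, say (which we may assume since $\psi(q)\to 0$). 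Exactly the same argument applied to the pairs $(A_1,A_3)$ and $(A_2,A_4)$ — using $|k_1-k_3|\ge n$ and $k_4-k_2\ge n$ — gives
\[
\mu(A_1\cap A_3)=\mu(A_1)\mu(A_3)(1+O(\psi(q))),\qquad \mu(A_2\cap A_4)=\mu(A_2)\mu(A_4)(1+O(\psi(q))),
\]
hence $\mu(A_1\cap A_3)\,\mu(A_2\cap A_4)=\prod_{i=1}^4\mu(A_i)\,(1+O(\psi(q)))$ as well.

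Finally I would subtract the two expansions and divide by $\mu(A_1\cap A_3)\mu(A_2\cap A_4)\asymp\prod_i\mu(A_i)$, obtaining the advertised $O(\psi(q))$ bound. The only obstacle worth flagging is bookkeeping: one must check that $\psi$ is non-increasing (which follows from $\psi(n)\le K\kappa^n$) so that each of the pairwise gaps, all $\ge q$, produces an error bounded by $\psi(q)$, and one must control the product of three $(1+O(\psi(q)))$ factors uniformly. Both are routine once $\psi(q)$ is eventually $\le 1$.
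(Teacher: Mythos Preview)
Your argument is correct and essentially identical to the paper's: both reduce $\mu\bigl(\bigcap_i A_i\bigr)$ and $\mu(A_1\cap A_3)\,\mu(A_2\cap A_4)$ to $\prod_i\mu(A_i)$ via iterated $\psi$-mixing along the time-ordered windows and then compare, the paper carrying this out with explicit triangle-inequality bookkeeping while your multiplicative $(1+\vartheta\,\psi(q))$ notation is a little cleaner. One small correction: the monotonicity of $\psi$ follows directly from its definition as a supremum over nested $\sigma$-fields, not from the exponential upper bound $\psi(n)\le K\kappa^n$.
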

 \begin{proof} Consider $k_1\le k_2\le k_3< k_4$. The other cases are similar. The claim follows from (\ref{eq:3.5}) and { an iterated application of the triangle inequality splitting off $A_4$, $A_3$ and $A_2$ successively, then combing $A_1$ and $A_3$ and finally $A_1$ and $A_4$:
 \begin{eqnarray*}
 &&\!\!\!\!\!\!\!\!\!\!\!|\mu(A_1\cap A_3\cap A_2\cap A_4)- \mu(A_1\cap A_3)\mu(A_2\cap A_4)|\\
&\leq&[ \psi(q)(1+\psi(q))^2+ \psi(q)((1+\psi(q))+1)] \mu(A_1)\mu(A_2)\mu(A_3)\mu(A_4) \\
&& \;\;\;\;\;\;\;\; + \psi(q)(1-\psi(q))^{-1}\mu(A_2\cap A_4)\mu(A_1\cap A_3) \\
&\leq&\{ [ \psi(q)(1+\psi(q))^2+ \psi(q)((1+\psi(q))+1)] (1-\psi(q))^{-2}\\ && \;\;\;\;\;\;\;\;+\psi(q)(1-\psi(q))^{-1})\}\mu(A_2\cap A_4)\mu(A_1\cap A_3)\\
&=&  O(\psi(q))\mu(A_1\cap A_3)\mu(A_2\cap A_4).
 \end{eqnarray*}}
 \end{proof}
  \begin{lemma}\label{lem:3.7} Let $n=p+q$, $0\le k_1<k_3$ and $0\le k_2\le k_4$ be as in Lemma \ref{lem:3.6}. Then for H\"older continuous functions $g_i:\Omega\to\mathbb R$, $i=1,2,3$,
  \begin{eqnarray}\label{eq:3.6}  && \lvert\mu (g_1\circ \sigma^{k_1} g_2\circ \sigma^{k_3} g_1\circ \sigma^{k_2} g_3\circ \sigma^{k_4}) - \mu(g_1\circ \sigma^{k_1} g_2\circ \sigma^{k_3})\mu(g_1\circ \sigma^{k_2} g_3\circ \sigma^{k_4})\rvert \notag\\
  && \qquad\le  \sqrt{2 K \psi(q)} (\|g_1\|_\infty+\|g_2\|_\infty+\|g_3\|_\infty) + 8 \max\{H_1,H_2\} \rho^p,
  \end{eqnarray}
  where $K$ is some constant independent of $g_1$ and $g_2$
  \end{lemma}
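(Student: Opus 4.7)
The plan is to reduce \eqref{eq:3.6} to an application of Lemma~\ref{lem:3.6} by first approximating the H\"older continuous functions $g_i$ by cylinder-measurable ones. For each $i\in\{1,2,3\}$ I would take $\tilde g_i$ to be constant on every admissible cylinder $C$ of length $p$, e.g.\@ $\tilde g_i\vert_C\coloneqq g_i(x_C)$ for a fixed reference point $x_C\in C$. The H\"older estimate gives $\|g_i-\tilde g_i\|_\infty\le H_i\rho^p$, and $\tilde g_i\circ\sigma^{k}$ is $\mathcal F_{k}^{k+p}$-measurable.

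In both $\mu$-integrals on the left-hand side of \eqref{eq:3.6} I would then substitute $g_i\circ\sigma^{k_i}\to\tilde g_i\circ\sigma^{k_i}$ one factor at a time via a telescoping. Each intermediate error carries exactly one difference $(g_i-\tilde g_i)\circ\sigma^{k}$ bounded in sup-norm by $H_i\rho^p$, multiplied by bounded factors; summing the (at most eight) such terms produces the $8\max\{H_1,H_2\}\rho^p$-contribution, presumably under an implicit normalisation $\|g_i\|_\infty\le 1$ built into the statement.

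For the cylinder-measurable approximants I would expand $\tilde g_i=\sum_{C\in \mathcal C_p}\tilde g_i(C)\1_C$, so that
\[\mu\bigl(\tilde g_1\circ\sigma^{k_1}\,\tilde g_2\circ\sigma^{k_3}\,\tilde g_1\circ\sigma^{k_2}\,\tilde g_3\circ\sigma^{k_4}\bigr)=\sum_{C_1,C_2,C_3,C_4}\tilde g_1(C_1)\tilde g_2(C_3)\tilde g_1(C_2)\tilde g_3(C_4)\,\mu(C_1\cap C_2\cap C_3\cap C_4),\]
with the cylinders placed at positions $k_1,k_3,k_2,k_4$ respectively. Lemma~\ref{lem:3.6} then yields
\[\mu(C_1\cap C_2\cap C_3\cap C_4)=\mu(C_1\cap C_3)\mu(C_2\cap C_4)+O(\psi(q))\mu(C_1\cap C_3)\mu(C_2\cap C_4),\]
and summing the main term reproduces the factorised product $\mu(\tilde g_1\circ\sigma^{k_1}\tilde g_2\circ\sigma^{k_3})\,\mu(\tilde g_1\circ\sigma^{k_2}\tilde g_3\circ\sigma^{k_4})$.

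The main obstacle is to refine the na\"ive $O(\psi(q))$ remainder into the sharper $\sqrt{2K\psi(q)}$-factor with linear-in-norms dependence claimed in \eqref{eq:3.6}. For this I would invoke the $\phi$-mixing covariance inequality, valid for Gibbs measures on topologically mixing subshifts of finite type since $\phi(n)\le\psi(n)$,
\[|\Cov_\mu(X,Y)|\le 2\phi(q)^{1/2}\|X\|_2\|Y\|_2\le 2\psi(q)^{1/2}\|X\|_2\|Y\|_2,\]
applied to $X\coloneqq\tilde g_1\circ\sigma^{k_1}\tilde g_2\circ\sigma^{k_3}$ and $Y\coloneqq\tilde g_1\circ\sigma^{k_2}\tilde g_3\circ\sigma^{k_4}$. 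The delicate point is that $X$ lives on the coordinates in $[k_1,k_1+p]\cup[k_3,k_3+p]$ and $Y$ on those in $[k_2,k_2+p]\cup[k_4,k_4+p]$, and these blocks may interleave; only the four-block generality of Lemma~\ref{lem:3.6} (which was stated without any monotonicity assumption on the $k_i$) makes the separation by $q$ still effective. Trading one $L^2$-norm against an $L^\infty$-norm in $\|X\|_2\|Y\|_2\le\|g_1\|_\infty^2\|g_2\|_\infty\|g_3\|_\infty$ and finally applying AM--GM to pass from a product to a sum would produce the additive $\|g_1\|_\infty+\|g_2\|_\infty+\|g_3\|_\infty$ form, with the constant $K$ absorbing all numerical factors.
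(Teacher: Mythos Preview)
Your proposal follows essentially the same route as the paper: approximate the $g_i$ by cylinder-measurable functions of depth $p$ (the paper uses the conditional averages $g_i^p$ defined just before Lemma~\ref{lem:3.6}, but any choice with $\|g_i-g_i^p\|_\infty\le H_i\rho^p$ works), telescope to produce the $8\max\{H_1,H_2\}\rho^p$ error, and then control the remaining covariance by a square-root mixing bound.

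The one point worth flagging is the step you call ``delicate''. The paper does not attempt to derive the interleaved-block covariance inequality from scratch; it simply invokes Lemma~3(b) of \cite{DK1}, which is exactly the statement that a $\psi$-type set bound of the form in Lemma~\ref{lem:3.6} (for possibly non-adjacent $\sigma$-fields) yields a covariance estimate with the factor $\sqrt{\psi(q)}$ and $L^2$-norms. Your plan to quote the standard $\phi$-mixing inequality $|\Cov(X,Y)|\le 2\phi(q)^{1/2}\|X\|_2\|Y\|_2$ does not apply verbatim, because that inequality is proved for $X$ measurable with respect to a single past block and $Y$ with respect to a single future block, whereas here $X$ and $Y$ each sit on a union of two blocks that may interleave. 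What saves the day is precisely that the Ibragimov-type proof of the $\phi$-covariance inequality only uses the set bound, and Lemma~\ref{lem:3.6} supplies that bound for the interleaved $\sigma$-fields; this repackaging is the content of \cite[Lemma~3(b)]{DK1}. So your instinct that ``Lemma~\ref{lem:3.6} makes the separation still effective'' is correct, but the clean way to cash it in is to cite that external lemma rather than the textbook $\phi$-mixing inequality.

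Your observation about the apparent need for an implicit normalisation $\|g_i\|_\infty\le 1$ (to get the additive $\|g_1\|_\infty+\|g_2\|_\infty+\|g_3\|_\infty$ from what is naturally a product bound) is also well taken; the paper's formulation is indeed loose on this point.
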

  \begin{proof}  Replacing the functions $g_i$ and $g_j$ in (\ref{eq:3.6}) by their approximations $g_i^p$ and $g_j^p$ the error in the left hand side is at most $8 \max\{H_i,H_j\} \rho^n$. Then apply Lemma 3 b) in \cite{DK1} together with Lemma \ref{lem:3.6}.
   \end{proof}
  
  Apply Lemma \ref{lem:3.7} to the functions $g_1=f_i-\mu_t(f_i)$, $g_2=f_j-\mu_t(f_j)$ and $g_3= \sum_{l=0}^L (f_j-\mu_t(f_j))\circ \sigma^l$ to obtain

  \begin{corollary}\label{cor:3.8} For the potentials $f_i$ ($0\le i\le d$) in Theorem \ref{theo:2.8}
  \begin{eqnarray*}
  &&\int \widetilde f_i^t\circ \sigma^{k_1}\widetilde f_j^t\circ \sigma^{k_3}\widetilde f_i^t\circ \sigma^{k_2}\sum_{l=0}^L \widetilde f_j^t\circ \sigma^{k_4+l} \d\mu_t = \\
  && =
  \int \widetilde f_i^t\circ \sigma^{k_1}\widetilde f_j^t\circ \sigma^{k_3}\d\mu_t \int \widetilde f_i^t\circ \sigma^{k_2}\sum_{l=L} \widetilde f_j^t\circ \sigma^{k_4+l} \d\mu_t +\\
  && \qquad\qquad +O\left((\psi(p)^{1/2}+\rho^p)(\|g_1\|+L\|g_2\|)\right).
  \end{eqnarray*}
  \end{corollary}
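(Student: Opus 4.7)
The proposed proof is a direct specialization of Lemma \ref{lem:3.7} to carefully chosen test functions. I take
\[
g_1 \coloneqq \widetilde f_i^t, \qquad g_2 \coloneqq \widetilde f_j^t, \qquad g_3 \coloneqq \sum_{l=0}^{L} \widetilde f_j^t \circ \sigma^l,
\]
so that $g_3 \circ \sigma^{k_4} = \sum_{l=0}^{L} \widetilde f_j^t \circ \sigma^{k_4+l}$ is exactly the innermost sum appearing on the left-hand side of the statement. With this identification, both the four-fold integral on the LHS of the corollary and the product $\mu_t(g_1\circ\sigma^{k_1} g_2\circ\sigma^{k_3})\,\mu_t(g_1\circ\sigma^{k_2} g_3\circ\sigma^{k_4})$ coincide with the corresponding sides of display (\ref{eq:3.6}) in Lemma \ref{lem:3.7}.

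What remains is to reorganize the error bound
\[
\sqrt{2K\psi(q)}\,(\|g_1\|_\infty + \|g_2\|_\infty + \|g_3\|_\infty) \;+\; 8\max\{H_1,H_2\}\rho^p
\]
from Lemma \ref{lem:3.7} into the form announced in the corollary. Since $g_3$ is a Birkhoff sum of length $L+1$, the trivial estimate $\|g_3\|_\infty \le (L+1)\|g_2\|_\infty$ shows that the first summand is $O\!\bigl(\psi(q)^{1/2}(\|g_1\| + L\|g_2\|)\bigr)$. The second summand is already of the form $O\bigl(\rho^p(\|g_1\|+\|g_2\|)\bigr)$. Identifying the two auxiliary parameters (the corollary uses a single $p$ in the role of both the approximation depth and the mixing gap $q$ of Lemma \ref{lem:3.7}) and adding, one obtains $O\bigl((\psi(p)^{1/2}+\rho^p)(\|g_1\|+L\|g_2\|)\bigr)$, which is the stated error.

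The one point deserving comment is that, although the Hölder constant of $g_3$ itself grows exponentially in $L$ — composing a Hölder function with $\sigma^l$ multiplies its Hölder constant by $\rho^{-l\alpha}$, and summing over $0\le l\le L$ produces a factor of order $\rho^{-L\alpha}$ — Lemma \ref{lem:3.7} has been formulated so that only the Hölder constants $H_1, H_2$ of the first two test functions appear in the error estimate; the third function $g_3$ enters only through its sup-norm in the mixing-type term. This is precisely what keeps the $L$-dependence linear rather than exponential, and is the essential design feature that makes the corollary useful. No other obstacle arises, and the rest is bookkeeping of absolute constants.
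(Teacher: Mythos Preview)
Your proposal is correct and follows exactly the paper's own argument: the paper's proof consists of the single sentence ``Apply Lemma \ref{lem:3.7} to the functions $g_1=f_i-\mu_t(f_i)$, $g_2=f_j-\mu_t(f_j)$ and $g_3= \sum_{l=0}^L (f_j-\mu_t(f_j))\circ \sigma^l$,'' which is precisely your choice of test functions. Your additional bookkeeping on the error term and your remark that Lemma \ref{lem:3.7} is deliberately stated so that only $H_1,H_2$ (and not the exponentially large H\"older constant of $g_3$) enter the bound are accurate elaborations that the paper leaves implicit.
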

This corollary may be used to calculate the convergence of the covariance matrices in this note, especially in Sections \ref{sec:2.3} and \ref{sec:3.4}.

  Next we consider a function $h:\Omega\times \Omega \to\mathbb R$ 
  of the form
  \begin{equation}\label{eq:3.7}
  h(x,y)= f(x)g(y)+f(y)g(x) \qquad x,y\in \Omega.
  \end{equation}
   We assume that $\int f \d\mu=\int g \d\mu=0$ and we notice that $h$ is symmetric, i.\,e.\@ $h(x,y)=h(y,x)$. This is the general case  of a bivariate kernel since for any function $k(x,y)=f(x)g(x)$ the function
  \[ h(x,y) = \frac 12 (f(x)g(y)+f(y)g(x))\]  
  leads to the same von Mises functional
  \[ M_n(k)(x,y)=\sum_{k,l=0}^{n-1} k(\sigma^k(x),\sigma^l(y)) = \sum_{k,l=0}^{n-1} h(\sigma^k(x),\sigma^l(y)).\] 
  Such statistical functionals will be considered next. We shall apply the theory developed in \cite{DG}. Such applications as here are sketched in \cite{DG} but need to be further specialized in the present context of Gibbs measures.
  
  \begin{proposition}\label{prop:3.9} Let  $h$ be as in (\ref{eq:3.7}) and in $L_2\coloneqq L_2(\mu\times\mu)$. Then the operator
  \[Tu(x)=\int h(x,y) u(y) \d\mu(y)\] 
  is a self-adjoint Hilbert-Schmidt  operator on $L_2$ with eigenvalues 
 \begin{equation}\label{eq:3.8}
  \lambda_1=\int fg \d\mu+\|f\|_{L_2}\|g\|_{L_2}\quad
  \lambda_2=\int fg \d\mu- \|f\|_{L_2}\|g\|_{L_2}
  \end{equation} 
  The corresponding orthonormal eigenfunctions are
   \begin{equation}\label{eq:3.9}
  \phi_{i}(x)=  c_i \left(\frac f{\|f\|_{L_2}^2\|g\|_{L_2}} -(-1)^i \frac g{\|f\|_{L_2}\|g\|_{L_2}^2}\right) \quad i=1,2
   \end{equation}
   with norming constants $c_1$ and $c_2$.
 \end{proposition}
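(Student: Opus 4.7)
The key observation is that $T$ is effectively a rank-at-most-two operator. A direct computation gives
\[ Tu(x)= f(x)\int g\, u\,\d\mu + g(x)\int f\,u\,\d\mu = \langle g,u\rangle_{L_2} f(x)+ \langle f,u\rangle_{L_2}g(x),\]
so the range of $T$ is contained in $\mathrm{span}\{f,g\}$. The Hilbert--Schmidt property is immediate from the hypothesis $h\in L_2(\mu\times\mu)$, which in turn follows from $f,g\in L_2$ and the explicit form of $h$; self-adjointness follows because $h$ is real-valued and symmetric in $(x,y)$. The spectral theorem for compact self-adjoint operators thus yields real eigenvalues with orthonormal eigenfunctions, and any eigenfunction associated to a non-zero eigenvalue must lie in $\mathrm{span}\{f,g\}$.

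The plan is therefore to reduce the infinite-dimensional eigenvalue problem to a $2\times 2$ matrix problem. Writing an ansatz $\phi=\alpha f+\beta g$ and applying $T$ gives
\[ T\phi = \bigl(\alpha\langle f,g\rangle_{L_2}+\beta\|g\|_{L_2}^{2}\bigr) f + \bigl(\alpha\|f\|_{L_2}^{2}+\beta\langle f,g\rangle_{L_2}\bigr)g.\]
Assuming $f$ and $g$ are linearly independent (which one may assume without loss of generality, since otherwise one eigenvalue is zero and the rank-one case is trivial), matching coefficients yields $M\binom{\alpha}{\beta}=\lambda\binom{\alpha}{\beta}$ with
\[ M=\begin{pmatrix}\langle f,g\rangle_{L_2} & \|g\|_{L_2}^{2}\\ \|f\|_{L_2}^{2} & \langle f,g\rangle_{L_2}\end{pmatrix}.\]
The characteristic polynomial $(\lambda-\langle f,g\rangle_{L_2})^{2}=\|f\|_{L_2}^{2}\|g\|_{L_2}^{2}$ is solved by $\lambda_{1,2}=\int fg\,\d\mu\pm\|f\|_{L_2}\|g\|_{L_2}$, giving exactly \eqref{eq:3.8}.

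For the eigenfunctions, the first row of $(M-\lambda_i I)\binom{\alpha}{\beta}=0$ gives $\beta/\alpha=(\lambda_i-\langle f,g\rangle_{L_2})/\|g\|_{L_2}^{2}=\pm\|f\|_{L_2}/\|g\|_{L_2}$, so $\phi_i$ is proportional to $f/\|f\|_{L_2}\mp(-1)^{i}\cdot g/\|g\|_{L_2}$; factoring out $1/(\|f\|_{L_2}\|g\|_{L_2})$ recovers precisely the expression \eqref{eq:3.9}, with the two scalars $c_1,c_2>0$ chosen to make $\|\phi_i\|_{L_2}=1$. Orthogonality of $\phi_1$ and $\phi_2$ is automatic from self-adjointness when the two eigenvalues differ (i.\,e.\@ when $\|f\|_{L_2}\|g\|_{L_2}\neq 0$), but one may also check it directly via $\langle f/\|f\|_{L_2}+g/\|g\|_{L_2},\,f/\|f\|_{L_2}-g/\|g\|_{L_2}\rangle=0$.

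No step is really the main obstacle; the only point requiring any care is the degenerate case $f\equiv 0$, $g\equiv 0$, or $f$ and $g$ proportional, which can be handled separately by noting that $T$ then has rank at most one and the formulas \eqref{eq:3.8}--\eqref{eq:3.9} degenerate consistently (one eigenvalue becomes zero and the orthogonal complement of the remaining eigenfunction lies in $\ker T$).
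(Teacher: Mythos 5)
Your proposal is correct and follows essentially the same route as the paper: reduce the rank-two integral operator to the $2\times 2$ eigenvalue problem on $\mathrm{span}\{f,g\}$, solve the characteristic equation for $\lambda_{1,2}=\int fg\,\d\mu\pm\|f\|_{L_2}\|g\|_{L_2}$, and read off the eigenfunctions. Your explicit handling of the degenerate case ($f,g$ proportional or zero) is a small addition the paper glosses over, and aside from a harmless sign-notation slip in the phrase ``$\mp(-1)^i$'' the argument matches the paper's.
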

 
 \begin{proof} This is well known. Since
 \begin{eqnarray*}
  Tu(x) &=& \int (f(y)g(x)+f(x)g(y))u(y)\d\mu(y)\\
  &=&f(x)\int g(y)u(y)\d\mu(y)+g(x) \int f(y)u(y)\d\mu(y)
  \end{eqnarray*}
 the range of $T$ is 2-dimensional. The eigenvalues  are computed from the equations
 \begin{eqnarray*}
&&  a\int fg\d\mu +b\int g^2\d\mu= \lambda a\\
&& a\int f^2\d\mu +b\int fg\d\mu= \lambda b.
\end{eqnarray*} 
Thus,
\begin{equation*}
 \lambda_{1,2}=\int fg \d\mu\pm \|f\|_{L_2}\|g\|_{L_2}.
 \end{equation*}
 One easily checks that  eigenfunctions are
 \begin{equation*}
 \phi_{1,2}(x)= \left(\|\frac  f{\|f\|_{L_2}^2\|g\|_{L_2}} \pm \frac g {\|f\|_{L_2}\|g\|_{L_2}^2}\|_{L_2}\right)^{-1} 
 \left(\frac f{\|f\|_{L_2}^2\|g\|_{L_2})} \pm \frac g{\|f\|_{L_2}\|g\|_{L_2}^2}\right).
 \end{equation*}
Note that $\phi_1$ and $\phi_2$ are orthonormal.
 \end{proof} 
  
  \begin{theorem}\label{theo:3.10} Let $\mu$ be a an invariant Gibbs measure with H\"older continuous potential $F$ on a  topologically mixing subshift of finite type $\Omega$.   Let $f,g:\Omega\to\mathbb R$ be H\"older continuous and centered. Then
  $\frac 1n S_nfS_ng$ is a von Mises functional with kernel $k(x,y)=f(x)g(y)$ and has a representation
  \[ \frac 1n S_nfS_ng=\frac 1{n}\left( \lambda_1 (S_n\phi_1)^2 +\lambda_2 (S_n\phi_2)^2\right),\] 
  where $\lambda_i$ and $\phi_i$ are the eigenvalues (\ref{eq:3.8}) and orthonormal eigenfunctions (\ref{eq:3.9}) for the kernel $h(x,y)=\frac 12(k(x,y)+k(y,x))$.
  
Moreover, 
\[ \frac 1n S_nfS_ng\]    
  converges weakly to the distribution of $\lambda_1 Z_i+\lambda_2 Z_2$ where $Z_i$ are independent and have the distribution of the square of a standard normal random variable.
  \end{theorem}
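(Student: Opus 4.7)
For the algebraic identity, I would first note that by Proposition \ref{prop:3.9}, the integral operator $T$ associated with $h$ has rank two on $L_2(\mu)$ and is self-adjoint, so the spectral theorem for finite-rank self-adjoint operators yields the pointwise kernel expansion
\[
h(x,y)=\lambda_1\phi_1(x)\phi_1(y)+\lambda_2\phi_2(x)\phi_2(y).
\]
Using the symmetry $h(x,y)=h(y,x)$, the double Birkhoff sum coincides with the product $S_nf\cdot S_ng$:
\[
S_nf(x)\,S_ng(x)=\sum_{k,l=0}^{n-1}f(\sigma^k x)g(\sigma^l x)=\sum_{k,l=0}^{n-1}h(\sigma^k x,\sigma^l x),
\]
and substituting the spectral expansion yields $\lambda_1(S_n\phi_1)^2+\lambda_2(S_n\phi_2)^2$. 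Division by $n$ gives the first claim.

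For the distributional limit, I would apply the central limit theorem for H\"older continuous observables under Gibbs measures (classical, see \cite{Bo}; alternatively derivable from the $\psi$-mixing decay recorded in Section \ref{sec:3.3}). Both $\phi_1$ and $\phi_2$ are H\"older continuous and centered under $\mu$, and so is every linear combination $\alpha\phi_1+\beta\phi_2$. The Cram\'er--Wold device then yields joint convergence
\[
\tfrac{1}{\sqrt n}\bigl(S_n\phi_1,\,S_n\phi_2\bigr)\xrightarrow{d}\mathcal N(0,\Gamma),\qquad \Gamma_{ij}=\Cov_\mu(\phi_i,\phi_j),
\]
and the continuous mapping theorem applied to $(u,v)\mapsto\lambda_1u^2+\lambda_2v^2$ delivers weak convergence of $\tfrac{1}{n}S_nf\,S_ng$ to $\lambda_1 N_1^2+\lambda_2 N_2^2$ with $(N_1,N_2)\sim\mathcal N(0,\Gamma)$.

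The delicate point, and the one I expect to be the main obstacle, is matching this limit against the stated form $\lambda_1 Z_1+\lambda_2 Z_2$ with independent $\chi^2_1$ variables $Z_i$; this requires $\Gamma$ to be the identity matrix. Expanding $\Cov_\mu(\phi_i,\phi_j)$ using the bilinearity of the asymptotic covariance, the explicit linear form of the eigenfunctions in Proposition \ref{prop:3.9}, and the shift-invariance identity $\Cov_\mu(f,g)=\Cov_\mu(g,f)$, one sees that off-diagonal vanishing and unit-diagonal normalisation hold precisely when the static $L_2(\mu)$ inner product used in Proposition \ref{prop:3.9} is replaced by the asymptotic-covariance bilinear form $(\varphi,\psi)\mapsto\Cov_\mu(\varphi,\psi)$. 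Under this re-identification of eigenfunctions and eigenvalues the $\phi_i$ become asymptotically orthonormal and $\Gamma=\mathrm{Id}$; the mixing control of the tail sums $\sum_{k\ge 1}\int\phi_i\,\phi_j\circ\sigma^k\,\d\mu$ needed to make this effective is already furnished by Lemma \ref{lem:3.7} and Corollary \ref{cor:3.8}. Making this rescaling explicit, and tracking how the spectral values $\lambda_i$ of Proposition \ref{prop:3.9} transform under the change of inner product, is where the proof requires the most care.
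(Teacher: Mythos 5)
Your algebraic identity and its derivation via the rank-two spectral decomposition of $h$ coincide with the paper's argument (Proposition \ref{prop:3.9} together with the remark after (\ref{eq:3.7})). For the limit law the paper proceeds differently: it simply invokes Theorem 4 of \cite{DG}, noting that Gibbs measures are $\psi$-mixing with summable rates, and then asserts in Remark \ref{rem:3.11} that the $n^{-1/2}S_n\phi_i$ are asymptotically normal \emph{and independent}. Your direct route (scalar CLT for H\"older observables, Cram\'er--Wold, continuous mapping) is sound and yields convergence to $\lambda_1N_1^2+\lambda_2N_2^2$ with $(N_1,N_2)\sim\mathcal N(0,\Gamma)$, where $\Gamma_{ij}=\Cov_\mu(\phi_i,\phi_j)$ is the asymptotic covariance. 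The ``delicate point'' you flag is genuine, and it is precisely the point the paper's proof leaves implicit: orthonormality of $\phi_1,\phi_2$ in the static $L_2(\mu)$ inner product does not make $\Gamma$ the identity for a dependent process, since $\Cov_\mu(u,v)=\int uv\,\d\mu+\sum_{k\ge1}\int\bigl(u\cdot v\circ\sigma^k+v\cdot u\circ\sigma^k\bigr)\d\mu$ carries correction terms that do not vanish in general. Your proposed remedy --- diagonalizing the rank-two symmetric form $h$ with respect to the asymptotic-covariance bilinear form on $\mathrm{span}\{f,g\}$ (positive definite there when $f,g$ are linearly independent as cohomology classes) --- is the right one and does produce a limit $\lambda_1'Z_1+\lambda_2'Z_2$ with independent $\chi^2_1$ variables; but you should say explicitly that the resulting eigenpairs $(\lambda_i',\phi_i')$ are in general \emph{not} those of (\ref{eq:3.8})--(\ref{eq:3.9}), so the displayed pointwise representation and the stated limit cannot both hold verbatim with the same $\lambda_i$ unless the static and asymptotic forms agree on $\mathrm{span}\{f,g\}$ (as in the i.i.d.\ case of Example \ref{ex:6.4}). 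In short, your argument is at least as complete as the paper's; the one step you leave unfinished is a step the paper does not carry out either, hiding it in the citation of \cite{DG} and the unproved independence claim of Remark \ref{rem:3.11}.
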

  
  \begin{proof} By the remark after (\ref{eq:3.7}) 
  \[ M_n(k)= M_n(h)= S_nfS_ng.\] 
  By Proposition \ref{prop:3.9} and the spectral representation of  Hilbert-Schmidt operators
  \[ h(x,y) =  \lambda_1 \phi_1(x)\phi_1(y)+  \lambda_2 \phi_2(x)\phi_2(y).\] 
  The proof is completed by applying Theorem 4 in \cite{DG}. Note that this theorem applies for Gibbs measures on topologically mixing subshifts of finite type since $(X_k)_{k\ge 0}$ is $\psi$-mixing with summable rates (see the discussion in \cite{DG}, Section 9.2.2 ({\bf b})).
  \end{proof}
  
  \begin{remark}\label{rem:3.11} 
  Note that by its representation
  \[ \frac 1n S_nf(x)S_ng(x)= \frac 1{n} \left( \lambda_1 (S_n\phi_1(x))^2 +\lambda_2 (S_n\phi_2(x))^2\right).\] 
  Since $\frac 1{\sqrt{n}} S_n\phi_i$ are asymptotically normal and independent, the last statement in Theorem \ref{theo:3.10} follows.
  \end{remark}

\begin{corollary}\label{cor:3.12} Let $\mu_t$ denote invariant Gibbs measures for the potential $F_t=t_1f_1+t_2f_2$ where $t\in \mathbb R^2$. Let $\nu_t$ be the associated Gibbs measure for $\mathcal L_{F_t}$. Then for $i,j\in \{1,2\}$ and $\theta\in \mathbb R^2$
\[ \frac 1{n\nu_\theta([x_0,\ldots ,x_{n-1}])} D_{ij} [\nu_t([x_0,\ldots ,x_{n-1}])](\theta)\] 
converges weakly to the distribution $\lambda_1 Z_1^2+\lambda_2  Z_2^2 -\Sigma_{ij}$
where $Z_k$ are independent normal distributions, $\lambda_k$ and $\phi_k$ are the eigenvalues and eigenfunctions derived in Proposition \ref{prop:3.9} and where 
\[ \Sigma_{ij}^{\mu_{\theta}}=\lim_{m\to\infty} \frac 1m \int   S_m\widetilde f_i^\theta S_m\widetilde f_j^\theta \d\mu_\theta \] 
defines the asymptotic covariance matrix of $(\frac 1{\sqrt{n}} S_n\widetilde f_i^\theta)_{1\le i\le 2}$ under the invariant Gibbs distribution $\mu_\theta$.
\end{corollary}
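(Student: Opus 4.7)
The plan is to unpack the second partial derivative formula (\ref{eq:3.4}) of Lemma \ref{lem:3.3} and then reduce the claim to the von Mises--type limit Theorem \ref{theo:3.10}. Writing $C\coloneqq C_n(\omega)=[\omega_0,\ldots,\omega_{n-1}]$ and dividing (\ref{eq:3.4}) through by $n\nu_\theta(C)$ gives
\[
\frac{D_{ij}[\nu_s(C)](\theta)}{n\nu_\theta(C)} \;=\; \frac{1}{n\nu_\theta(C)}\int_{C} S_n\widetilde f_i^\theta\, S_n\widetilde f_j^\theta \d\nu_\theta \;-\; \frac{1}{n}\int S_n\widetilde f_i^\theta\, S_n\widetilde f_j^\theta\d\mu_\theta \;+\; o(1).
\]
The second summand converges deterministically to $\Sigma_{ij}^{\mu_\theta}$ by Lemma \ref{lem:3.2}(2), so it contributes $-\Sigma_{ij}$ in the limit.

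Next, I would exploit the bounded distortion of Birkhoff sums on cylinders. For a H\"older continuous function $g$ with contraction rate $\rho<1$ and H\"older constant $H$,
\[
\sup_{x,y\in C}|S_ng(x)-S_ng(y)|\le\sum_{k=0}^{n-1}H\rho^{n-k}\le\frac{H\rho}{1-\rho},
\]
so evaluating at the base point $\omega\in C$ yields
\[
\frac{1}{\nu_\theta(C)}\int_{C}S_n\widetilde f_i^\theta\, S_n\widetilde f_j^\theta\d\nu_\theta \;=\; S_n\widetilde f_i^\theta(\omega)\,S_n\widetilde f_j^\theta(\omega) \;+\; O\bigl(|S_n\widetilde f_i^\theta(\omega)|+|S_n\widetilde f_j^\theta(\omega)|\bigr).
\]
After dividing by $n$, the error is $O_{\P}(n^{-1/2})$ by Bowen's central limit theorem for H\"older observables under Gibbs measures, hence $o_{\P}(1)$.

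Finally, I would apply Theorem \ref{theo:3.10} with $f=\widetilde f_i^\theta$ and $g=\widetilde f_j^\theta$: the symmetrized kernel $h(x,y)=\tfrac12(f(x)g(y)+f(y)g(x))$ carries the two non-zero eigenvalues $\lambda_1,\lambda_2$ of Proposition \ref{prop:3.9}, and therefore $\frac{1}{n}S_n\widetilde f_i^\theta S_n\widetilde f_j^\theta$ converges weakly under $\mu_\theta$ to $\lambda_1 Z_1^2+\lambda_2 Z_2^2$ with $Z_1,Z_2$ independent standard normals. Combining this with the deterministic limit of the second summand through Slutsky's lemma delivers the asserted limit $\lambda_1 Z_1^2+\lambda_2 Z_2^2-\Sigma_{ij}^{\mu_\theta}$.

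The main technical point I anticipate is the transfer of weak convergence between $\mu_\theta$ and $\nu_\theta$, since the point $\omega$ carrying the cylinder $C=C_n(\omega)$ is implicitly drawn from one of these measures. This is resolved through Proposition \ref{prop:3.1}: the Radon--Nikodym derivative $\varphi_\theta=\d\mu_\theta/\d\nu_\theta$ is strictly positive, H\"older continuous and uniformly bounded, and the shift pushforwards $\sigma^k_\star\nu_\theta$ approach $\mu_\theta$ exponentially fast by $\psi$-mixing (\ref{eq:3.5}). A standard truncation-plus-shift argument then shows that the CLT, and thus Theorem \ref{theo:3.10}, produces identical limit laws under $\mu_\theta$ and under $\nu_\theta$, which completes the plan.
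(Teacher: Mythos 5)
Your proposal is correct and follows essentially the same route as the paper's proof: divide \eqref{eq:3.4} by $n\nu_\theta(C)$, observe that the resulting expression is asymptotically stochastically equivalent to $\frac1n S_n\widetilde f_i^\theta S_n\widetilde f_j^\theta-\Sigma_{ij}^{\mu_\theta}$, and invoke Theorem \ref{theo:3.10}. The only difference is that you spell out the bounded-distortion estimate and the $\mu_\theta$-versus-$\nu_\theta$ transfer, which the paper compresses into the phrase ``asymptotically stochastic equivalent''.
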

\begin{proof} By Lemma \ref{lem:3.3} 
\begin{eqnarray*}
&&  \frac 1{\nu_\theta([x_0,\ldots ,x_{n-1}])} D_{ij} [\nu_t([x_0,\ldots ,x_{n-1}])](\theta)\\
&& = \frac 1{\nu_\theta([x_0,\ldots ,x_{n-1}])} \int_{[x_0,\ldots ,x_{n-1}]} S_n\widetilde f_i^\theta S_n\widetilde f_j^\theta \d\nu_\theta - \int  S_n\widetilde f_i^\theta S_n\widetilde f_j^\theta \d\mu_\theta + o(n).
\end{eqnarray*}
Hence \[\frac 1{n\nu_\theta([x_0,\ldots ,x_{n-1}])} D_{ij} [\nu_t([x_0,\ldots ,x_{n-1}])](\theta)\]  and 
\[\frac 1n S_n\widetilde f_i^\theta S_n\widetilde f_j^\theta
-\lim_{m\to\infty} \frac 1m \int \int  S_m\widetilde f_i^\theta S_m\widetilde f_j^\theta \d\mu_\theta\] 
are asymptotically stochastic equivalent, so have the same limiting distribution.
By Theorem \ref{theo:3.10} the result follows.
\end{proof}

\subsection{Cram\'er--Rao bound}\label{sec:3.4}

In this section we   let $d,m\in \mathbb N$ and consider a  family
\[ \psi^n:\Theta\subset \mathbb R^d\to \mathbb{R}^{m},\qquad n\ge 0\] 
of functions of the parameter space.
We reformulate the classical Cram\'er--Rao bound for Gibbs distributions (\cite{Rao}, \cite{Cr}). We assume that $\Theta$ is an open set and $\Psi^n$ is differentiable on $\Theta$ for every $n\ge 1$.  Since the densities for the MLE are only given for the Gibbs measures $\nu_\theta$ explicitly we  use the Cram\'er--Rao  bounds only for these densities. For completeness we formulate the results because the applications in Sections \ref{sec:5} and \ref{sec:6}, and the connection to Theorem \ref{theo:2.8} are related to these bounds.

\begin{theorem}[Cram\'er--Rao Bound]\label{theo:3.13}
Let $\{T_n: n\ge 1\}$ be  a family of unbiased $\mathbb R$-valued (i.\,e.\@ $d=1$) statistics where $T_n$ estimates the parameter $\psi^n(\theta)=\mathbb{E}_{\theta}T_n(X_0,\ldots ,X_{n-1})$  in the family $(\nu_\theta^n:n\ge 1)$ of marginal distributions on $\mathcal F_0^n$ as explained in in Subsection \ref{sec:3.1}. Then for any $\theta_0\in \Theta$
\begin{eqnarray}\label{eq:3.10}
&& \int \left(T_n(x_0,\ldots ,x_{n-1})-\psi^n(\theta_0) \right)^2\d\nu_{\theta_0}^{n}((x_0,\ldots ,x_{n-1})) \cdot \notag \\
&& \quad \cdot \frac{1}{n}
\int \left(\frac{d}{d\theta }|_{\theta=\theta_0} \log \d\nu_\theta^n(x_0,\ldots ,x_{n-1})\right)^2 \d\nu_{
\theta_0}^n ((x_0,\ldots ,x_{n-1})) \notag\\
&&\qquad \geq 
\,\left(\frac{d}{d\theta }|_{\theta=\theta_0}   \psi^n(\theta)\right)^2.
\end{eqnarray}
\end{theorem}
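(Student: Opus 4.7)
The plan is to apply the classical Cauchy--Schwarz proof of the Cram\'er--Rao inequality, which adapts verbatim here: since $\nu_{\theta_0}^n$ is a probability on the \emph{finite} set $\Omega^n$ of admissible $n$-words, the ``integrals'' in (\ref{eq:3.10}) are really finite sums, so every interchange of differentiation with summation is automatic and no dominated-convergence argument is required.

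First I would introduce the score function
\[
S_{\theta_0}(c) \coloneqq \left.\frac{d}{d\theta}\right|_{\theta=\theta_0} \log \nu_\theta([c]), \qquad c \in \Omega^n.
\]
This is well defined because Proposition~\ref{prop:3.1} together with Lemma~\ref{lem:3.3} guarantee real-analyticity of $\theta \mapsto \nu_\theta([c])$, and the Gibbs estimate (\ref{eq:3.1}) ensures strict positivity, so $\log \nu_\theta([c])$ is a smooth function of $\theta$ for every fixed $c$. Up to the $1/n$ normalization, the second factor on the left-hand side of (\ref{eq:3.10}) is then exactly $\E_{\theta_0}[S_{\theta_0}^2] = I^n(\theta_0)$, the Fisher information discussed in Remark~\ref{rem:2.9}.

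Next I would derive two first-order identities by differentiating at $\theta = \theta_0$ under the finite sum over $c \in \Omega^n$. Differentiating the normalization $\sum_{c \in \Omega^n} \nu_\theta([c]) = 1$ gives $\E_{\theta_0}[S_{\theta_0}] = 0$; differentiating the unbiasedness relation $\sum_{c \in \Omega^n} T_n(c)\nu_\theta([c]) = \psi^n(\theta)$ gives $\E_{\theta_0}[T_n S_{\theta_0}] = (\psi^n)'(\theta_0)$. Subtracting $\E_{\theta_0}[T_n]\cdot\E_{\theta_0}[S_{\theta_0}] = 0$ from the second identity yields the covariance relation
\[
\Cov_{\nu_{\theta_0}^n}(T_n, S_{\theta_0}) = (\psi^n)'(\theta_0).
\]

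Finally, the Cauchy--Schwarz inequality in $L^2(\nu_{\theta_0}^n)$ applied to $T_n - \psi^n(\theta_0)$ and $S_{\theta_0}$ produces
\[
\left((\psi^n)'(\theta_0)\right)^2 \;\leq\; \int (T_n - \psi^n(\theta_0))^2 \d\nu_{\theta_0}^n \,\cdot\, \int S_{\theta_0}^2 \d\nu_{\theta_0}^n,
\]
which is (\ref{eq:3.10}). There is no substantive obstacle: the proof is the textbook Cauchy--Schwarz argument, and every analytic prerequisite---smoothness and positivity of $\theta \mapsto \nu_\theta([c])$, and the legitimacy of differentiating the defining identities under the finite sum---has already been established in Sections~\ref{sec:3.1}--\ref{sec:3.2}.
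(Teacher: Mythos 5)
Your proof is correct and takes essentially the same route as the paper: the paper reduces to the finite sample space $\Omega^n$ with counting measure as dominating measure and then simply cites the classical Cram\'er--Rao theorem (with Proposition~\ref{prop:3.1} supplying the regularity), and you have merely written out the standard Cauchy--Schwarz proof of that classical theorem, with all differentiations under a finite sum justified exactly as you say. One caveat: the inequality you obtain lacks the factor $\frac{1}{n}$ that appears in \eqref{eq:3.10}; that factor is not produced by the Cauchy--Schwarz argument (nor by the paper's own appeal to the classical theorem), and it appears to be a typo in the statement, since with it the bound would be $n$ times the classical one and false in general (e.g.\ when the classical bound is attained).
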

\begin{proof} Fix $n\ge 1$. In order to estimate $\psi^n(\theta)$ in the family $\{\nu_\theta^n: \theta\ge 0\}$ of probabilities $\nu_\theta^n$ on $\{1,\ldots,a\}^n$ note that the dominating measure can be chosen as the counting measure so that the density becomes $(x_1,x_2,\ldots ,x_n)\mapsto \nu_\theta^n(x_0,\ldots ,x_{n-1}):=\nu_\theta([x_1,\ldots ,x_n])$. Then, in view of Proposition \ref{prop:3.1}, apply the classical Cram\'er--Rao theorem (see \cite{Cr} and \cite{Rao}).
\end{proof}

Likewise we can reformulate the Cram\'er--Rao bound in the multivariate case  $d>1$. The Fisher information $I(\theta)$ in the multivariate case is defined by
\begin{eqnarray}\label{eq:3.11}
&& \qquad I^n(\theta)= \left( I_{k,l}^n(\theta)\right)_{1\le k,l\le d}\\
I_{k,l}^n(\cdot)&=& E\left[\frac {\partial}{\partial \theta_k}\log \nu_\theta^n([X_0,\ldots ,X_{n-1}]) \frac {\partial}{\partial \theta_j}\log \nu_\theta^n([X_0,\ldots ,X_{n-1}]) \right] \notag \\
&=& - E\left[ \frac {\partial^2}{\partial \theta_k\partial \theta_j}\log \nu_\theta^n([X_0,\ldots ,X_{n-1}])\right]\notag
\end{eqnarray}
\begin{theorem}\label{theo:3.14}
Let $\{T_n: n\ge 1\}$ be  a family of unbiased $\mathbb R^m$-valued  statistics where $T_n$ estimates the parameter $\psi^n(\theta)=\E T_n(X_0,\ldots ,X_{n-1})$  in the family $(\nu_\theta^n:n\in\N)$  as in Subsection \ref{sec:3.1}. Then for any $\theta_0\in \Theta$ the matrix
\[ 
\Cov _{\theta_0}(T^n(X_0,\ldots ,X_{n-1}))- \left(\frac{\partial \psi_j^n(\theta)}{\partial \theta_i}(\theta)\right)_{1\le i,j\le d}I^n(\theta_0)^{-1}\left(\frac{\partial \psi_j^n(\theta)}{\partial \theta_i}(\theta)\right)_{1\le i,j\le d}^t
\] 
is positive semidefinite,
where
$ \Cov _{\theta_0}(T_n)$
denotes the covariance matrix of  $T_n$.
\end{theorem}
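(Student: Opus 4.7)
The plan is to mirror the strategy used for Theorem \ref{theo:3.13} and reduce to the classical multivariate Cram\'er--Rao bound, upgrading the scalar Cauchy--Schwarz step to a block matrix (Schur complement) argument. The two ingredients needed are (i) that the family $\nu_\theta^n$ has a smooth density with respect to a common dominating measure, and (ii) that differentiation under the integral is legitimate. The first is automatic: the support of $\nu_\theta^n$ lies inside the finite set $\Omega^n$, so the counting measure dominates and the density is simply $x \mapsto \nu_\theta^n([x])$. The second follows from Proposition \ref{prop:3.1}, which gives real-analyticity of $\theta \mapsto \nu_\theta^n([x])$ for each fixed $x$; since the sum over $\Omega^n$ is finite, differentiation commutes with summation without any further justification.

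With this in place, I would define the score vector $S(\theta) \coloneqq \nabla_\theta \log \nu_\theta^n([X_0,\ldots,X_{n-1}])$, an $\mathbb R^d$-valued random variable. Differentiating the identity $\sum_{x\in \Omega^n} \nu_\theta^n([x]) = 1$ componentwise yields $\E_\theta[S(\theta)] = 0$, so that by definition $\Cov_\theta(S(\theta)) = I^n(\theta)$ (the second equality in \eqref{eq:3.11} being the usual information identity, obtained by differentiating $\E_\theta[S(\theta)]=0$ a second time and again moving derivatives inside the finite sum). Unbiasedness $\psi^n(\theta) = \sum_x T_n(x) \nu_\theta^n([x])$ differentiated coordinatewise gives
\[
\frac{\partial \psi_j^n}{\partial \theta_i}(\theta) = \sum_{x}  T_{n,j}(x)\,\frac{\partial \log \nu_\theta^n([x])}{\partial \theta_i}\, \nu_\theta^n([x]) = \E_\theta\bigl[(T_{n,j}-\psi_j^n(\theta))\,S_i(\theta)\bigr],
\]
so the cross-covariance between $T_n-\psi^n(\theta)$ and $S(\theta)$ is exactly the Jacobian $J(\theta)\coloneqq \bigl(\partial \psi_j^n/\partial \theta_i\bigr)_{i,j}$.

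The final step is the block matrix argument. The joint covariance matrix of the stacked random vector $(T_n-\psi^n(\theta_0),\, S(\theta_0))$ is
\[
\begin{pmatrix} \Cov_{\theta_0}(T_n) & J(\theta_0)^t \\ J(\theta_0) & I^n(\theta_0) \end{pmatrix},
\]
which is positive semidefinite as the covariance matrix of an actual random vector. Assuming $I^n(\theta_0)$ is invertible, the Schur complement formula yields that $\Cov_{\theta_0}(T_n) - J(\theta_0)^t I^n(\theta_0)^{-1} J(\theta_0)$ is positive semidefinite, which is the asserted inequality. The only genuine subtlety is ensuring invertibility of $I^n(\theta_0)$; this can be handled either as a standing hypothesis or by invoking Lemma \ref{lem:3.5}/Theorem \ref{theo:2.6} together with the asymptotic identity of Remark \ref{rem:2.9}, which links $n^{-1} I^n(\theta)$ to the positive-definite covariance matrix $\Sigma^{\mu_\theta}$ under the linear independence of $(f_1,\ldots,f_d)$ as cohomology classes. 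If $I^n(\theta_0)$ fails to be invertible one passes to the Moore--Penrose pseudoinverse and the conclusion of positive semidefiniteness still holds by a standard limiting argument.
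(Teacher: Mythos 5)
Your proposal is correct and follows essentially the same route as the paper, which for Theorem \ref{theo:3.14} simply reduces to the classical multivariate Cram\'er--Rao bound (the paper gives no separate proof beyond ``likewise,'' relying on the counting measure as dominating measure and the analyticity from Proposition \ref{prop:3.1} exactly as in Theorem \ref{theo:3.13}). You have merely written out the standard score-vector/Schur-complement proof of that classical bound in full, including the invertibility caveat for $I^n(\theta_0)$ that the paper leaves implicit; this is a faithful and complete filling-in of the cited argument.
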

\begin{remark}\label{rem:3.15}
The theorem says in particular that the $i$-th coordinate of $T_n$, $T_n^i$, satisfies 
\[ \E_\theta(T_n^i-\psi_i^n(\theta))^2\ge A_i(\theta)\] 
where $A_i(\theta)$ denotes the $i$-th diagonal element of the lower bound matrix
\[\left(\frac{\partial \psi_k^n(\theta)}{\partial \theta_l}(\theta)\right)_{1\le k,l\le d}I^n(\theta)^{-1}\left(\frac{\partial \psi_k^n(\theta)}{\partial \theta_l}(\theta)\right)_{1\le k,l\le d}^t.\] 
\end{remark}
\begin{definition} \label{def:3.16} A sequence of $T_n$ statistics is called asymptotically efficient at $\theta_0\in \Theta$ if 
\[\lim_{n\to\infty} \E_{\theta_0}\| T_n - \psi^n(\theta_0)\|^2 = \|(A_i(\theta_0))_{1\le i\le d}\|^2\] 
where $A_i$ is as in Remark \ref{rem:3.15}.
\end{definition}

\begin{proposition}\label{prop:3.17}
The Fisher information matrix $I^n(\theta)=(I_{ij}^n(\theta))_{1\le i,j\le d}$ satisfies the relation
\[ \lim_{n\to\infty}\frac 1n I_{ij}^n(\theta) = \lim_{m\to\infty} \frac 1m \int S_m\widetilde f_i^\theta S_m\widetilde f_j^\theta \d\nu_{\theta}\] 
if the distribution of $(X_k)_{k\ge 0}$ is drawn from $\nu_\theta$\end{proposition}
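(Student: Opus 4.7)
The plan is to reduce the Fisher information to a second moment of conditional scores, expand each score via Lemma~\ref{lem:3.4}, and then compare the resulting main term with $\int S_{n}\widetilde f_{i}^{\theta}\, S_{n}\widetilde f_{j}^{\theta}\,\d\nu_{\theta}$ through a conditional-covariance argument.

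First I would rewrite the Fisher information in its equivalent score-product form. The algebraic identity
\[ -D_{ij}\log\nu_\theta(C) = -\nu_\theta(C)^{-1} D_{ij}\nu_\theta(C) + \nu_\theta(C)^{-2} D_i\nu_\theta(C)\, D_j\nu_\theta(C),\]
combined with $\sum_C D_{ij}\nu_\theta(C) = D_{ij}\bigl(\sum_C \nu_\theta(C)\bigr) = D_{ij}(1) = 0$ (the sum being over $n$-cylinders $C$), yields
\[ I^n_{ij}(\theta) = \sum_C \nu_\theta(C)\, D_i\log\nu_\theta(C)\, D_j\log\nu_\theta(C).\]
Lemma~\ref{lem:3.4} supplies $D_i\log\nu_\theta(C) = Y_i(C) + O(1)$ with $Y_i(C) \coloneqq \nu_\theta(C)^{-1}\int_C S_n\widetilde f_i^\theta\,\d\nu_\theta$, which is precisely the conditional expectation $E_n[S_n\widetilde f_i^\theta]$, where $E_n \coloneqq E_{\nu_\theta}[\,\cdot\mid \mathcal F_0^n]$. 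Expanding the product and summing gives $I^n_{ij}(\theta) = \int E_n[S_n\widetilde f_i^\theta]\, E_n[S_n\widetilde f_j^\theta]\,\d\nu_\theta + R_n$; Cauchy--Schwarz combined with the standard bound $\int (S_n\widetilde f_i^\theta)^2\,\d\nu_\theta = O(n)$, which follows from $\psi$-mixing of $\nu_\theta$ and the fact that $\widetilde f_i^\theta$ is centred under $\mu_\theta$, then yields $R_n = O(\sqrt n)$.

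The crucial step is the comparison
\[\int E_n[S_n\widetilde f_i^\theta]\, E_n[S_n\widetilde f_j^\theta]\,\d\nu_\theta = \int S_n\widetilde f_i^\theta\, S_n\widetilde f_j^\theta\,\d\nu_\theta + O(1),\]
which by the law of total covariance amounts to bounding the integrated conditional covariance of $(S_n\widetilde f_i^\theta, S_n\widetilde f_j^\theta)$ given $\mathcal F_0^n$. This is the main obstacle; one exploits H\"older continuity of $\widetilde f_i^\theta$ to show that for $x,y$ lying in the same $n$-cylinder,
\[ |S_n\widetilde f_i^\theta(x) - S_n\widetilde f_i^\theta(y)| \le H_i \sum_{k=0}^{n-1}\rho^{n-k} = O(1)\]
uniformly in $n$, so the conditional variance of $S_n\widetilde f_i^\theta$ given $\mathcal F_0^n$ is a.s.\ $O(1)$; Cauchy--Schwarz on conditional variances then delivers the required $O(1)$ bound on the integrated conditional covariance.

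Combining the estimates produces $I^n_{ij}(\theta) = \int S_n\widetilde f_i^\theta\, S_n\widetilde f_j^\theta\,\d\nu_\theta + O(\sqrt n)$. Dividing by $n$ and letting $n\to\infty$, the convergence of $n^{-1}\int S_n\widetilde f_i^\theta\, S_n\widetilde f_j^\theta\,\d\nu_\theta$ (as guaranteed by the $\psi$-mixing argument of Remark~\ref{rem:2.9}) secures both the existence and the identity of the two limits, completing the proof.
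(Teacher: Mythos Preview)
Your proof is correct and follows essentially the same route as the paper: both rest on Lemma~\ref{lem:3.4} together with the H\"older bound $|S_n\widetilde f_i^\theta(x)-S_n\widetilde f_i^\theta(y)|=O(1)$ on $n$-cylinders, leading to $I^n_{ij}(\theta)=\int S_n\widetilde f_i^\theta S_n\widetilde f_j^\theta\,\d\nu_\theta + o(n)$. The paper simply compresses your two approximation steps into one line by writing $S_n\widetilde f_i^\theta(x)=D_i[\log\nu_s([x_0,\ldots,x_{n-1}])](\theta)+O(1)$ directly, and it does not need your opening derivation of the score-product form since that is already one of the two equivalent definitions in \eqref{eq:3.11}; your conditional-expectation framing is a pleasant repackaging of the same idea but not a different argument.
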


\begin{proof} By Lemma \ref{lem:3.4} for $x\in \Omega$ 
\[ S_n\widetilde f_i^\theta (x) = D_i[\log \nu_s([x_0,\ldots ,x_{n-1}])](\theta) + O(1).\] 
Therefore
\[ \frac 1n I_{ij}^n(\theta)= \frac 1n \int S_n\widetilde f_i^\theta(x)S_n\widetilde f_j^\theta(x)  \d\nu_\theta(x)+O(1/n).\] 
 Applying the $\psi$-mixing property for Gibbs measures this proves the claim.
\end{proof}

Similar results hold of corse for the invariant Gibbs measures $\mu_\theta$; however, the densities are not explicitly given, as we remarked before.

\section{Proof of the main theorems}\label{sec:4}

We use the definitions and notations in Sections \ref{sec:2} and \ref{sec:3} and consider, for each $n\in \mathbb N$, the families 
\[  \Theta\subset \mathbb R^d;\quad \nu_\theta^n \quad \mbox{and}\quad \X_{n}=X_0,\ldots ,X_{n-1}.\]

\begin{lemma}\label{lem:4.1} There exist $\eta_n>0$, $\lim_{n\to\infty} \eta_n=0$ such that $\mu_\theta$ a.\,s.\@ for all sufficiently large $n$ an $\eta_n$-maximum likelihood estimator $\widehat \theta_n= \widehat \theta_n(\X_{n})$  exists so that
\[ \mu_{\widehat \theta_n}\in \Theta_n^{\eta_n}(\X_{n}).\] 
\end{lemma}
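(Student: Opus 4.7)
My plan is to verify the two ingredients required for an $\eta_n$-MLE to exist: (i) the feasible set $\Theta_n^{\eta_n}(\X_n)$ is non-empty for all sufficiently large $n$ (a.s.), and (ii) on this set the likelihood $s\mapsto\nu_s([\X_n])$ attains its maximum. The whole argument is a soft compactness/continuity argument once $\eta_n$ is chosen appropriately.

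For (i), I would start from Birkhoff's ergodic theorem applied to the ergodic shift-invariant Gibbs measure $\mu_\theta$: since each $f_i$ is H\"older (hence integrable),
\[ \frac{1}{n} S_n f_i(\X) \xrightarrow[n\to\infty]{\text{a.s.}} \int f_i\,\d\mu_\theta, \qquad 0\le i\le d. \]
Because $\X_n$ is an $n$-cylinder and the $f_i$ are H\"older of contraction rate $\rho<1$, any two $\omega,\omega'\in[\X_n]$ satisfy $|S_n f_i(\omega)-S_n f_i(\omega')|\le H_i/(1-\rho)$, so $\tfrac1n S_{[\X_n]}\F$ and $\tfrac1n S_n\F(\X)$ differ by $O(1/n)$. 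Hence
\[ \delta_n\coloneqq\Bigl|\tfrac1n S_{[\X_n]}\F-\int\F\,\d\mu_\theta\Bigr|_\infty\xrightarrow[n\to\infty]{\text{a.s.}}0. \]
I would then set $\eta_n\coloneqq \max\bigl(\sqrt{\delta_n},\,n^{-1/4}\bigr)$ (any comparable choice works). By construction $\eta_n\to 0$ a.s., $\eta_n^2\ge \delta_n$, and $1/\eta_n\to\infty$, so eventually $\theta\in[-1/\eta_n,1/\eta_n]^d$. Combining, $\theta\in\Theta_n^{\eta_n}(\X_n)$ for all $n$ sufficiently large (a.s.), establishing non-emptiness.

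For (ii), both maps $s\mapsto \nu_s([\X_n])$ and $s\mapsto \int \F\,\d\mu_s$ are continuous (in fact analytic) on $\overline\Theta$ by Proposition \ref{prop:3.1} and Lemma \ref{lem:3.2}. Consequently $\Theta_n^{\eta_n}(\X_n)$ is the intersection of the open set $\Theta$ with the compact box $[-1/\eta_n,1/\eta_n]^d$ and with the closed sub-level set $\{s:|\tfrac1n S_{[\X_n]}\F-\int\F\,\d\mu_s|_\infty\le\eta_n^2\}$. Its closure is compact, so the supremum of $\nu_s([\X_n])$ is attained at some point $\widehat\theta_n$ in that closure. Since $\theta\in\Theta$ is an interior feasible point and the sup is at least $\nu_\theta([\X_n])>0$, for large $n$ (using the Gibbs estimate \eqref{eq:3.1} and the smallness of $\eta_n^2$) the maximizer lies in a small neighborhood of $\theta$, hence inside the open set $\Theta$; this legitimizes the use of $\Theta$ rather than $\overline\Theta$ in the definition.

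The main obstacle is the tuning of $\eta_n$: it must simultaneously (a) shrink to zero so that later results yield consistency and an asymptotic distribution via Taylor expansion, and (b) not shrink so fast that the set $\Theta_n^{\eta_n}(\X_n)$ collapses before $\delta_n$ catches up. Taking $\eta_n$ to dominate $\sqrt{\delta_n}$ (a random but a.s.-vanishing quantity) is the cleanest way to satisfy both, and this is the only genuinely non-trivial point in the proof; the rest is classical compactness and the continuity statements already established in Section \ref{sec:3}.
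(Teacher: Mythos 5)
Your overall strategy (non-emptiness of the feasible set via the ergodic theorem, attainment of the maximum via continuity and compactness) matches the paper's. However, there is one genuine defect: your sequence $\eta_n=\max\bigl(\sqrt{\delta_n},\,n^{-1/4}\bigr)$ is \emph{random} --- $\delta_n$ depends on the observed path $\X$ --- whereas the lemma (and Theorem \ref{theo:2.4}, which it feeds) asserts the existence of a single deterministic sequence of positive numbers $\eta_n\to0$ for which the event ``$\theta\in\Theta_n^{\eta_n}(\X_n)$ eventually'' has full $\mu_\theta$-measure. The quantifier order matters: $\eta_n$ must be fixed before the sample is drawn, since it enters the very definition of the estimator $\widehat\theta_n$ and the error terms $O(\eta_n)$ in the consistency proof. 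The paper obtains a deterministic rate by invoking the large deviation bound $\mu_\theta\bigl(\bigl|\tfrac1nS_nf_i-\int f_i\d\mu_\theta\bigr|\ge\eta_n^2\bigr)\le \e^{-nI_i(\eta_n^2)}$, choosing $\eta_n\to0$ slowly enough that $\sum_n\e^{-nI_i(\eta_n^2)}<\infty$, and applying Borel--Cantelli. Your argument can be repaired --- either this way, or by the standard extraction of a deterministic envelope from a.s.\ convergence (pick $N_k$ with $\mu_\theta(\sup_{m\ge N_k}\delta_m>1/k)<2^{-k}$ and set $\eta_n^2=1/k$ on $[N_k,N_{k+1})$) --- but some such quantitative step is needed and is the actual content of the choice of $\eta_n$; asserting ``any comparable choice works'' with a path-dependent $\eta_n$ does not prove the statement as written.

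A smaller remark: your treatment of attainment is if anything more careful than the paper's (you note that $\Theta\cap[-1/\eta_n,1/\eta_n]^d\cap\{|\tfrac1nS_{[x]}\F-\int\F\d\mu_s|_\infty\le\eta_n^2\}$ need not be closed when $\Theta$ is open and pass to the closure), but your claim that the maximizer lies in the interior of $\Theta$ ``for large $n$'' quietly presupposes the consistency argument of Proposition \ref{prop:4.2}; at the level of Lemma \ref{lem:4.1} it suffices to take the maximum over the closure, as the subsequent consistency proof does not need interiority at this stage.
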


\begin{proof}
Let $\eta_n>0$ be a sequence converging to zero and determined below by the large deviation property of H\"older continuous functions. Let $\X=(X_n)_{n\ge 0}$ be drawn from $\nu_\theta$. Then by the ergodic theorem and H\"older continuity
\[\int f_i \d\mu_\theta  = \lim_{n\to\infty} \frac 1n \inf_{x\in [\X_{n}]} S_nf_i =  \lim_{n\to\infty} \frac 1n \sup_{x\in [\X_{n}]} S_nf_i,
\]
hence $\mu_\theta\in \Theta_n^{\eta_n}([\X_{n}])$ for all sufficiently large $n$. In order to see this apply the large deviation estimate (see e.g. \@ \cite{D2}, \cite{DK01}, \cite{L3}, \cite{Lo1})
\[ 
\mu_\theta\left(\left\{\left|\frac 1nS_nf_i -\int f_i \d\mu_\theta \right|\ge \eta_n^2\right\}\right)\le \mathrm{e}^{-n I_i(\eta_n^2) }
\]
for each $0\le i\le d$, where each $I_i$ denotes the information function in the large deviation result for $(f_i)_{i=0,\ldots, d}$.
Then choose $\eta_n\to 0$ such that for each $i$ we get $\sum_{n=1}^\infty \e^{-nI_i(\eta_n^2)}<\infty$ and apply the Borel-Cantelli lemma. 
It follows that 
\[\nu_\theta([\X_{n}]) \le \sup_{t\in \Theta_{n}^{\eta_{n}}(\X_{n})} \nu_t([\X_{n}]).\]  
The map $t\mapsto \nu_t([\X_{n}])$ is continuous by Proposition \ref{prop:3.1}, the set $\{t\in \Theta: \mu_t\in \Theta_n^{\eta_n}(\X_{n})\}$ is bounded and closed, hence $t\mapsto \nu_t([\X_{n}])$ attains its maximum on $\{t\in \Theta\cap[-\eta_n^{-1},\eta_n^{-1}]: \mu_t\in \Theta_n^{\eta_n}(\X_{n})\}$. 
\end{proof}

\begin{proposition}\label{prop:4.2} Let  $\widehat \theta_n$ denote the $\eta_n$-maximum likelihood estimator in Lemma \ref{lem:4.1}. Then for almost all  observation $\mathbb X=(X_l)_{l\in \mathbb N_0}$  drawn from the distribution $\nu_\theta$, we have that \[ \slim_{n\to\infty} \widehat \theta_n= \theta\in \overline \Theta.\]\end{proposition}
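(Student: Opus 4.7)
The plan is to push the inequality $\nu_{\widehat\theta_n}([\X_n])\ge\nu_\theta([\X_n])$ through the Gibbs bound and the ergodic theorem to obtain that the variational defect of $\mu_\theta$ with respect to the potential $F_{\widehat\theta_n}$ vanishes in the limit, and then to use weak-$\star$ compactness of $\M(\Omega)$ together with uniqueness of the equilibrium state to identify every weak-$\star$ accumulation point of $(\mu_{\widehat\theta_n})$ with $\mu_\theta$.

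First, by Lemma~\ref{lem:4.1} we have $\mu_\theta\in\Theta_{n}^{\eta_{n}}(\X_{n})$ almost surely for all large $n$, so the definition of the $\eta_n$-MLE yields $\nu_{\widehat\theta_n}([\X_n])\ge\nu_\theta([\X_n])$. Inserting the Gibbs estimate (\ref{eq:3.1}) on both sides, taking logarithms and dividing by $n$---and recalling that $\log D(t)=O(\lVert t\rVert)$, hence contributes at most $O(1/(n\eta_n))$ in view of $|\widehat\theta_n|_\infty\le 1/\eta_n$---one arrives at
\[-P(F_{\widehat\theta_n})+\tfrac1nS_nF_{\widehat\theta_n}(\X_n)\ \ge\ -P(F_\theta)+\tfrac1n S_nF_\theta(\X_n)+o(1).\]
The H\"older oscillation of $S_nf_i$ on an $n$-cylinder is uniformly $O(1)$, and Lemma~\ref{lem:4.1} was designed so that $\lvert\tfrac1nS_nf_i(\X_n)-\int f_i\d\mu_\theta\rvert\le\eta_n^2$ a.s.\@ for large $n$. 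Combining these with the constraint $\widehat\theta_n\in\Theta_{n}^{\eta_{n}}(\X_{n})$ and $|\widehat\theta_{n,i}|\le 1/\eta_n$, the Birkhoff averages may be replaced by the corresponding integrals against $\mu_\theta$, with total cross-error at most $d\cdot(1/\eta_n)\cdot\eta_n^2=O(\eta_n)=o(1)$. Using $\tfrac1nS_nF_\theta(\X_n)\to\int F_\theta\d\mu_\theta=P(F_\theta)-h(\mu_\theta)$ (ergodic theorem plus variational principle), this reduces to
\[-P(F_{\widehat\theta_n})+\int F_{\widehat\theta_n}\d\mu_\theta\ \ge\ -h(\mu_\theta)+o(1).\]
The variational principle applied to $\mu_\theta$ and the potential $F_{\widehat\theta_n}$ furnishes the matching upper bound $\le -h(\mu_\theta)$, so the variational defect vanishes in the limit:
\[P(F_{\widehat\theta_n})-\int F_{\widehat\theta_n}\d\mu_\theta-h(\mu_\theta)=o(1).\qquad(\star)\]

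Now let $\mu^{\ast}$ be any weak-$\star$ accumulation point of $(\mu_{\widehat\theta_n})$, say $\mu_{\widehat\theta_{n_k}}\to\mu^{\ast}$, guaranteed by compactness of $\M(\Omega)$. The same estimate applied coordinatewise gives $\int f_i\d\mu_{\widehat\theta_n}\to\int f_i\d\mu_\theta$ for $i=0,\ldots,d$, whence $\int f_i\d\mu^{\ast}=\int f_i\d\mu_\theta$ and in particular $\int F_\theta\d\mu^{\ast}=\int F_\theta\d\mu_\theta$. Rewriting $(\star)$ via the equilibrium identity $P(F_{\widehat\theta_n})=h(\mu_{\widehat\theta_n})+\int F_{\widehat\theta_n}\d\mu_{\widehat\theta_n}$ and absorbing a further $o(1)$ error of the same type yields $h(\mu_{\widehat\theta_n})=h(\mu_\theta)+o(1)$. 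Upper semicontinuity of the Kolmogorov--Sinai entropy on the expansive system $\Omega$ then gives $h(\mu^{\ast})\ge\limsup_k h(\mu_{\widehat\theta_{n_k}})=h(\mu_\theta)$, and so $h(\mu^{\ast})+\int F_\theta\d\mu^{\ast}\ge P(F_\theta)$. The variational principle forces equality, making $\mu^{\ast}$ an equilibrium state for $F_\theta$; by uniqueness of the equilibrium state for H\"older potentials on a topologically mixing subshift of finite type, $\mu^{\ast}=\mu_\theta$. Since every weak-$\star$ accumulation point equals $\mu_\theta$, $\mu_{\widehat\theta_n}\to\mu_\theta$ weakly, i.e.\@ $\widehat\theta_n$ $\star$-converges to $\theta\in\overline\Theta$ almost surely.

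The main obstacle is that $\Theta_{n}^{\eta_{n}}(\X_{n})$ only forces $|\widehat\theta_n|_\infty\le 1/\eta_n$, so products of the form $\widehat\theta_{n,i}\cdot(\text{random fluctuation})$ are borderline: the choice of $\eta_n$ in Lemma~\ref{lem:4.1}---driven by the exponential large-deviation bounds for H\"older observables---must be coupled tightly with the $\eta_n^2$ tolerance so that both $(1/\eta_n)\cdot\eta_n^2=\eta_n$ and $(1/\eta_n)\cdot(C/n)=C/(n\eta_n)$ tend to zero. Once this error bookkeeping is in place, the remaining ingredients (variational principle, upper semicontinuity of entropy on an expansive system, and uniqueness of the equilibrium state) are classical.
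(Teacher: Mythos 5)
Your proof is correct, and it reaches the same endgame as the paper (convergence of entropies, weak-$\star$ compactness of $\M(\Omega)$, upper semicontinuity of entropy, and uniqueness of equilibrium states for H\"older potentials), but the route to the key intermediate fact differs. You extract the vanishing variational defect $(\star)$ from the \emph{maximality} of the likelihood, i.e.\ from $\nu_{\widehat\theta_n}([\X_n])\ge\nu_\theta([\X_n])$ pushed through the Gibbs inequality \eqref{eq:3.1} and the ergodic theorem. The paper never uses that inequality: it only uses the \emph{membership} $\theta,\widehat\theta_n\in\Theta_n^{\eta_n}(\X_n)$, which forces $\vert\int f_i\d\mu_{\widehat\theta_n}-\int f_i\d\mu_\theta\vert\le 2\eta_n^2$, and then applies the variational principle twice (once with test measure $\mu_{\widehat\theta_n}$ against $F_\theta$, once with test measure $\mu_\theta$ against $F_{\widehat\theta_n}$) to squeeze $h_{\mu_{\widehat\theta_n}}\to h_{\mu_\theta}$ directly. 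The paper's version is shorter and avoids two pieces of bookkeeping your version needs: the Gibbs constant $D(\widehat\theta_n)$, whose logarithm grows like $\Vert\widehat\theta_n\Vert=O(1/\eta_n)$, and the resulting requirement $n\eta_n\to\infty$. You correctly flag this requirement; it is harmless, since the choice of $\eta_n$ in Lemma \ref{lem:4.1} only bounds how \emph{fast} $\eta_n$ may decay (via $\sum_n\e^{-nI_i(\eta_n^2)}<\infty$), so one may always slow it down to ensure $n\eta_n\to\infty$ as well. What your approach buys in exchange is that it genuinely exploits the defining optimality of the estimator, which would survive in variants where the constraint set is relaxed so far that membership alone no longer pins down the integrals $\int f_i\d\mu_{\widehat\theta_n}$; what the paper's approach buys is the observation that, with the constraint set as defined in \eqref{eq:2.1}, consistency is already built into the constraint and the maximization is not needed for this particular proposition.
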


\begin{proof} 
 Observe that by the almost sure properties obtained in Lemma \ref{lem:4.1} we have
\[ \left\vert\int F_\theta \d\mu_{\hat\theta_n}-\int F_\theta \d\mu_\theta\right\vert \leq (1+\vert \theta\vert_{1})
 \eta_n^2= O(\eta_n)\]
and similarly,  replacing $F_\theta$  by $F_{\hat\theta_n}$,  
\[ \left\vert\int F_{\hat\theta_n} \d\mu_{\hat\theta_n}-\int F_{\hat\theta_n} \d\mu_\theta\right\vert \leq (1+\vert \hat\theta_{n}\vert_{1})
 \eta_n^2= O(\eta_n).\]
Denote by $h_\mu$ the entropy of the shift-invariant probability $\mu$. The above estimate together with the variational principle  gives
\begin{eqnarray*} 
P(F_\theta)=h_{\mu_\theta}+ \int F_{ \theta} \d\mu_{\theta}
\geq h_{\mu_{\widehat \theta_n}}+ \int F_\theta \d\mu_{\widehat \theta_n} \ge 
h_{\mu_{\widehat \theta_n}} + \int F_\theta \d\mu_\theta - O(\eta_n)
\end{eqnarray*}
and in the same way
\begin{eqnarray*} 
h_{\mu_{\hat\theta_{n}}}+ \int F_{\hat \theta_{n}} \d\mu_{\hat\theta_{n}}
\geq h_{\mu_{\theta}}+ \int F_{\hat \theta_{n}} \d\mu_{\theta} \ge
h_{\mu_{\theta}}+ \int F_{\hat \theta_{n}} \d\mu_{\hat\theta_{n}} - O( \eta_n).
\end{eqnarray*}
Combining both inequalities provides  $\lim h_{\mu_{\widehat \theta_n}}= h_{\mu_{ \theta}}$.
Let $\mu$ be a weak  accumulation point of $(\mu_{\widehat \theta_n})$. Since entropy is upper semicontinuous for the topologically mixing shift space  we find
\[ h_{\mu}+ \int F_\theta \d\mu \ge  h_{\mu_\theta}+ \int F_\theta \d\mu_\theta= P(F_\theta)\] 
and by the uniqueness of equilibrium states and the variational principle we conclude $\mu=\mu_\theta$.  
\end{proof}

\begin{lemma}\label{lem:4.3} For  a symmetric positive-definite matrix $K\in\mathbb{R}^{d\times d}$,   we  set 
\[ G:\mathbb R^d\to \R ^{d\times d},\; G(M)\coloneqq M^{t} M- K. 
\]
Then  the matrix $G(M)$ is  invertible  for  Lebesgue almost every  $M \in \R^{d}$.  \end{lemma}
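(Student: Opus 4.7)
The plan is to exploit the rank-one structure of $M^{t}M$ and reduce the invertibility question to a single scalar equation. Writing $M=(m_{1},\ldots,m_{d})$ as a row vector, the product $M^{t}M$ is the outer product of $M^{t}$ with itself, hence a symmetric matrix of rank one. Since $K$ is positive definite, it is invertible, and the matrix determinant lemma applies: for any invertible $A$ and vectors $u,v$ one has $\det(A+uv^{t})=\det(A)\bigl(1+v^{t}A^{-1}u\bigr)$. Applying this with $A=-K$ and $u=v=M^{t}$ yields
\[
\det G(M)=\det(M^{t}M-K)=(-1)^{d}\det(K)\bigl(1-MK^{-1}M^{t}\bigr).
\]

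Since $K$ is positive definite we have $\det(K)>0$, so $G(M)$ fails to be invertible precisely when $MK^{-1}M^{t}=1$. The key observation is that $K^{-1}$ is itself symmetric and positive definite, so the map $M\mapsto MK^{-1}M^{t}$ is a positive definite quadratic form on $\mathbb{R}^{d}$; its level set at the value $1$ is an ellipsoid, i.e.\@ a smooth $(d-1)$-dimensional hypersurface in $\mathbb{R}^{d}$.

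It therefore remains only to note that any such ellipsoid has $d$-dimensional Lebesgue measure zero. This can be seen directly by diagonalising $K^{-1}$ via an orthogonal transformation (which preserves Lebesgue measure) and reducing to the statement that the unit sphere of the rescaled coordinates has measure zero; alternatively, one invokes the general fact that the zero set of a non-identically-vanishing real-analytic (in fact polynomial) function on $\mathbb{R}^{d}$ has Lebesgue measure zero, noting that $M\mapsto MK^{-1}M^{t}-1$ is a non-zero polynomial (it equals $-1$ at $M=0$). Either route gives the claim.

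The only subtlety is notational: one must be careful that $M^{t}M$ really denotes the rank-one outer product rather than the scalar inner product, so that the matrix determinant lemma is applicable; once this is unpacked, no genuinely hard step remains.
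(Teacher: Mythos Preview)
Your proof is correct and takes a genuinely different route from the paper's. The paper argues by splitting $\mathbb{R}^d=\mathbb{R}M\oplus(\mathbb{R}M)^{\perp}$ and checking that $v\mapsto vG(M)$ does not vanish on either summand, invoking the eigenspace decomposition of $K$ to handle the case $v\in\mathbb{R}M$; the exceptional set is then described as a union of spheres inside the eigenspaces. You instead compute $\det G(M)$ explicitly via the matrix determinant lemma, obtaining $\det G(M)=(-1)^{d}\det(K)\bigl(1-MK^{-1}M^{t}\bigr)$ and thereby identifying the singular locus in one stroke as the ellipsoid $\{M:MK^{-1}M^{t}=1\}$. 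This is shorter and more transparent: it gives the exact exceptional set as a single smooth hypersurface and uses only the invertibility and positive-definiteness of $K$, never its spectral decomposition. It also sidesteps a delicate point in the paper's argument, namely that checking $vG(M)\neq 0$ separately for $v\in\mathbb{R}M$ and $v\in(\mathbb{R}M)^{\perp}$ does not by itself rule out a kernel vector with components in both summands; your determinant computation avoids this issue entirely.
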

\begin{proof}
 By assumption the matrix $K$ has an orthonormal system of eigenspaces $E_{i}$ with associated positive eigenvalues $\kappa_{i}>0$, $1\leq i\leq \ell $ for some $\ell\leq d$.
 
  If   \begin{eqnarray*}M \not\in L\coloneqq \bigcup_{i} \left\{u\in E_{i}: \|u\|^{2}\neq \kappa_{i}\right\},\end{eqnarray*}
   then on the one hand,  for $v\in \R M \setminus \{0\}$ we either have  $vK\not\in \R M$ and hence $v(M^{t} M-K) =\langle M,v\rangle M - v K   \neq 0$, or  $vK=\alpha M$
    for some  $\alpha\neq0$, whence $M$ is a eigenvector and there is $i$ with $M\in E_i$. Since by assumption $\|M\|^{2}\neq \kappa_{i}$  and writing $v=\beta M$ it follows that 
    $v(M^{t} M-K) =\langle M,v\rangle M - vK =\beta (\|M\|^{2}-\kappa_{i})M  \neq 0$.
    
 On the other hand,  for $v\in (\R M)^{\perp} \setminus \{0\}$ we have $ v(M^{t} M-K) =- vK  \neq 0$.
 
Hence, $G(M)$ can only be non-invertible if $M\in L$, but $L$ is a  Lebesgue null set. 
\end{proof}

\begin{proposition}\label{prop:4.4} For $\theta\in \Theta$, we assume that the $\eta_n$-maximum likelihood estimators $\widehat \theta_n$ converge to $\theta$ a.\,s.\@ with respect to $\nu_\theta$. 

Then the random vectors 
\[ \left(\frac 1{\sqrt{n}} S_n\tilde f_i^\theta\right)_{1\le i\le d}\] 
converge weakly with respect to $\mu_\theta$ to a centered normal variable $N=(N_1,\ldots ,N_d)$ with covariance  $\Sigma^{\mu_{\theta}}$ defined as in Theorem \ref{theo:2.8}, which may be degenerate. 

Moreover, if $f_1,\ldots,f_d$ are linearly independent as cohomology classes, then
$\sqrt{n}(\widehat \theta_n -\theta)$ converges in distribution to the distribution of 
\[ G^{-1}(N)N^t,\] 
where $G$ is defined in Lemma \ref{lem:4.3} with $K=\Sigma^{\mu_{\theta} }$ and where $ G^{-1}(N)$ denotes the inverse of $ G(N)$. 
\end{proposition}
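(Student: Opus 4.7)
The plan is to address the two claims in turn. For the first, joint asymptotic normality of $\bigl(n^{-1/2} S_n \tilde f_i^\theta\bigr)_{1\le i\le d}$ under $\mu_\theta$ is the classical Gibbs-measure CLT. For each $a \in \mathbb R^d$ the linear combination $\sum_i a_i \tilde f_i^\theta$ is H\"older continuous and $\mu_\theta$-centered, and the univariate CLT with limit variance $a\Sigma^{\mu_\theta}a^t$ follows from the exponential $\psi$-mixing recalled in Section \ref{sec:3.3}; the Cram\'er--Wold device then promotes this to joint convergence to $\mathcal N(0,\Sigma^{\mu_\theta})$, with degeneracy posing no obstruction.

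For the second claim, cohomological independence of $(f_1,\ldots,f_d)$ combined with Lemma \ref{lem:3.5} makes $\Sigma^{\mu_\theta}$ positive definite and renders $t\mapsto\mu_t$ injective on $\Theta$ (distinct parameters yield non-cohomologous potentials, hence distinct equilibrium states). Proposition \ref{prop:4.2} together with Remark \ref{rem:2.3} then upgrades the $\star$-convergence of $\widehat\theta_n$ to Euclidean convergence. Since $\theta$ lies in the open set $\Theta$, the truncation radius $\eta_n^{-1}$ diverges, and the large-deviation calibration of $\eta_n$ forces $\eta_n^2 \gg n^{-1/2}$ (so the $\eta_n^2$-level-set constraint in $\Theta_n^{\eta_n}(\X_n)$ dominates the fluctuation scale of $\widehat\theta_n$ and is eventually inactive), the estimator is a genuine interior maximum of $s\mapsto\nu_s([\X_n])$ for all sufficiently large $n$. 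The first-order conditions
\[
D_i[\nu_s([\X_n])](\widehat\theta_n) = 0, \qquad 1 \le i \le d,
\]
therefore hold almost surely eventually.

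A first-order Taylor expansion of these equations about $\theta$ gives, with $C \coloneqq [\X_n]$,
\[
0 = D_i[\nu_s(C)](\theta) + \sum_{j=1}^d D_{ij}[\nu_s(C)](\theta)\,(\widehat\theta_n - \theta)_j + R_n^{(i)},
\]
where $R_n^{(i)}$ is controlled through Lemma \ref{lem:3.3}(3). Normalizing the $i$-th equation by $\sqrt n\,\nu_\theta(C)$: Lemma \ref{lem:3.3}(1) combined with H\"older continuity of $f_i$ identifies the first term with $n^{-1/2} S_n \tilde f_i^\theta(\X) + o_P(1) \to N_i$; Lemma \ref{lem:3.3}(2) together with Corollary \ref{cor:3.12} and Theorem \ref{theo:3.10} yields $(n\,\nu_\theta(C))^{-1} D_{ij}[\nu_s(C)](\theta) \to N_i N_j - \Sigma^{\mu_\theta}_{ij} = G(N)_{ij}$, jointly with $(N_i)_i$ by the continuous mapping theorem. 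Assembling the $d$ equations into matrix form produces the identity $G(N)\sqrt n(\widehat\theta_n - \theta)^t = -N^t + o_P(1)$. Lemma \ref{lem:4.3} guarantees $G(N)$ is almost surely invertible (here positive-definiteness of $\Sigma^{\mu_\theta}$ is essential), so Slutsky's lemma gives $\sqrt n(\widehat\theta_n - \theta)^t \to G^{-1}(N) N^t$ in distribution, the sign being absorbed by the symmetry $N \stackrel{d}{=} -N$.

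The principal obstacle is rigorous control of the Taylor remainder $R_n^{(i)}/(\sqrt n\,\nu_\theta(C)) = o_P(1)$. The third-derivative bound from Lemma \ref{lem:3.3}(3), multiplied by $|\widehat\theta_n - \theta|^2$ and divided by $\sqrt n\,\nu_\theta(C)$, requires an \emph{a priori} rate $\widehat\theta_n - \theta = O_P(n^{-1/2})$ -- precisely what the identity is meant to produce. I would break this circularity by first establishing tightness separately via a preliminary expansion on a shrinking neighborhood of $\theta$, exploiting that $G(N)$ is bounded away from singularity on events of probability arbitrarily close to one, and then feeding the resulting $n^{-1/2}$ scale back into the full expansion to harvest the distributional limit.
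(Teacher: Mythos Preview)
Your proposal follows essentially the same route as the paper's proof: invoke the standard multivariate CLT for H\"older observables under the Gibbs measure, use the first-order condition $D_i[\nu_s(C)](\widehat\theta_n)=0$ at the interior maximum, Taylor-expand around $\theta$, identify the linear and Hessian terms via Lemma~\ref{lem:3.3}, recognize the limiting Hessian as $G(N)$, and conclude by Lemma~\ref{lem:4.3} plus continuous mapping.

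Your concern about the remainder is honest and well-placed, but you should know that the paper's own proof handles it no more rigorously than you do: after writing the expansion with $O(\|\widehat\theta_n-\theta\|^2)$, the paper simply passes to the linearized matrix equation $(\widehat\theta_n-\theta)(D_{ji})=-(D_i)^t$ and proceeds without the tightness bootstrap you sketch. So your proposed two-step argument (first extract $O_P(n^{-1/2})$ from a preliminary expansion, then feed it back) would in fact make the argument \emph{more} complete than what appears in the paper, not less. The sign absorption via $N\stackrel{d}{=}-N$ is also implicit in the paper.
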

\begin{proof} The first assertion is standard since by assumption in Section \ref{sec:2} the functions $f_1$,\ldots ,$f_d$ are H\"older continuous.

Since $\widehat \theta_n$ converges to $\theta\in \Theta$ a.s., we may assume that for $\epsilon>0$ there is a set $\Omega_0\subset \Omega$ so that $\mu_\theta(\Omega_0)>1-\epsilon$ and $n_0\in \mathbb N$ so that $\widehat \theta_n(x_0,\ldots ,x_{n-1})\in \Theta$ fo all $n\ge n_0$ all $x=(x_k)_{k\ge 0}\in\Omega_0$. We also assume that $\epsilon$ is small enough to that in the $\epsilon$-ball around $\theta$ all functions considered below  have an  analytic extension to the complex $\epsilon$-ball in $\mathbb C^d$ around $\theta$.

In order to condense the notation,
for a cylinder set $C$ and $\lambda=(t_i^{k_i})_{1\le i\le d}$, $(k_i\ge 0)$, write $|\lambda|= k_1+\ldots +k_d$ and
\[  D_{\lambda}\nu_t(C)= \frac{\partial^{|\lambda|}\nu_t(C)}{\partial^{k_1} t_1\ldots \partial^{k_d}t_d}.\] 
If $|\lambda|=1$ and $k_i=1$ we also write $D_i$ for $D_\lambda$.

Since $\nu_t([x_0,\ldots ,x_{n-1}])$ is analytic by Proposition \ref{prop:3.1} we also may assume that $\widehat \theta_n(x_0,\ldots ,x_{n-1})$ is contained in the domain of the Taylor expansion of $\nu_t([x_0,\ldots ,x_{n-1}])$ at $t=\theta$: Writing $C=[x_0,\ldots ,x_{n-1}]$ and $\delta=(\delta_i)_{1\le i\le d}$ we obtain
\[ \nu_{\theta+\delta}(C)= \nu_\theta(C) + \sum_{|\lambda|=1}^\infty D_\lambda[\nu_t(C)](\theta)\prod_{j=1}^d \delta_j^{k_j}.\] 
Likewise, each partial derivative $D_i[\nu_t(C)]$ has the Taylor  polynomial of order 1
\[ D_i[\nu_t(C)](\theta+\delta)= D_i[\nu_t(C)](\theta)+\sum_{|\lambda|=1}^\infty D_\lambda [D_i[\nu_{t}(C)]](\theta)\prod_{j=1}^d \delta_j^{k_j} + R(\delta)\] 
with remainder term
\[ R(\delta)= \sum_{|\lambda|=2} \frac{D_\lambda[D_i[\nu_t(C)]](\theta)}{k_1!\cdot\ldots \cdot k_d!}\prod_{j=1}^d t_j(\delta)^{k_j}\] 
for some $t(\delta)=(t_j(\delta))_{1\le j\le d}$ in the domain of the Taylor expansion.
Since $\nu_{\widehat \theta)n(x)}$ is maximal  
\[ D_i[\nu_t(C)](\widehat \theta_n) =0\qquad \forall 1\le i\le d\] 
hence for $\delta=\widehat \theta_n-\theta$ and $1\le i\le d$
\[ \sum_{|\lambda|=1} D_\lambda[D_i[\nu_t(C)]](\theta)\prod_{j=1}^d(\widehat \theta_n-\theta)_j^{k_j}=-D_i[\nu_t(C)](\theta)   +O(\|\widehat \theta_n-\theta\|^2),\] 
or 
\[(\widehat \theta_n-\theta)\left(D_{ji}[\nu_t(C)](\theta)\right)_{1\le i,j\le d} = -(D_i[\nu_t(C)](\theta))_{1\le i\le d})^t.\]  
By Lemma \ref{lem:3.3} (\ref{eq:3.3})  we have, for all  $x\in C$,
\[ \frac 1{\sqrt{n}\nu_t(C)} (D_i[\nu_t(C)](\theta))_{1\le i\le d})= \frac 1{\sqrt{n}} ( S_n\widetilde f_i^t(x))_{1\le i\le d}+ o(1)\] 
and by Lemma \ref{lem:3.3} (\ref{eq:3.4}), for all $x\in C$,
  \begin{eqnarray*}
 && \frac 1{n\nu_t(C)} (\left(D_{ji}[\nu_t(C)](\theta)\right)_{1\le i,j\le d}\\
 && \qquad =
\frac 1n\left( S_n\widetilde f_i^\theta(x)S_n\widetilde f_j^\theta(x)-\lim_{m\to\infty} \frac 1m \int S_m\widetilde f_i^\theta S_n\widetilde f_j^\theta\d\mu_\theta\right)_{1\le i,j\le d}+ o(1).
\end{eqnarray*}
Now,
$ \Sigma^{\mu_{\theta}}$ 
is the covariance matrix of the asymptotic distribution of the vector $\frac 1{\sqrt{n}} S_n\widetilde f^\theta$ under $\mu_\theta$. The matrix  $ \Sigma^{\mu_{\theta}}$ is positive definite since the family $\{f_i:\ i=1,\ldots, d\}$ is linearly independent as cohomology classes (Lemma \ref{lem:3.5}).
Let  $G:\mathbb R^d\to\mathbb R^{d\times d}$ be given as before with $K=\Sigma^{\mu_{\theta}}$, i.\,e.\@  
$ G(M) =M^t M - \Sigma^{\mu_{\theta}} $.
Then by Lemma \ref{lem:4.3}, we have that  $G(M)$
is $\P_{\mathcal{N}(0,\Sigma^{\mu_{\theta}})}$-a.\,s.\@  invertible  and one obtains that $\sqrt{n}(\widehat \theta_n-\theta)$ is asymptotically equivalent to the random sequence
\[ G^{-1}\left(\left(\frac 1{\sqrt{n}} S_n\widetilde f_i^\theta\right)_{1\le i\le d}\right) \left(\frac 1{\sqrt{n}} S_n\widetilde f_1^\theta,\ldots ,\frac 1{\sqrt{n}} S_n\widetilde f_d^\theta\right)^t.\] 
Applying \cite[ Theorem 2.7]{Bi},  it follows that the limiting distribution of $\sqrt{n}(\widehat \theta_n-\theta)$
is given by the distribution of $ G^{-1}(N) N^t$ with   $N\sim \mathcal{N}(0,\Sigma^{\mu_{\theta}})$.
\end{proof}

\begin{lemma}\label{lem:4.5} Let $I^n(t)$ denote the Fisher information in (\ref{eq:3.11}) for the distribution $\nu_t$, $n\ge 1$. Then there exists $\Sigma^{\nu_t}= \left(  \Sigma^{\nu_t}_{ij}\right)_{1\le i,j\le d}$ such that 
\[ \lim_{n\to\infty} \frac 1n  I^n_{i,j}(t) = \Sigma_{i,j}^{\nu_t}.\] 
Also
\[ \lim_{n\to\infty} \int (\widehat \theta_n-\theta)^t (\widehat \theta_n-\theta) \d\mu_\theta= \lim_{n\to\infty} \frac 1n E[ N^t(G^{-1}(N))^2 N] \] 
where $N$ is as in Proposition \ref{prop:4.4}.
\end{lemma}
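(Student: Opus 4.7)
For the first assertion, I would start from Lemma~\ref{lem:3.4}, which gives
\[ D_{i}[\log\nu_{s}(C)](\theta)=\frac{1}{\nu_{\theta}(C)}\int_{C} S_{n}\widetilde f_{i}^{\theta} \d\nu_{\theta}+O(1) \]
uniformly in $n$ and in the $n$-cylinder $C=[X_{0},\ldots,X_{n-1}]$. Since $f_{i}$ is H\"older continuous, the oscillation of $S_{n}\widetilde f_{i}^{\theta}$ on $C$ is uniformly bounded (by geometric summation of the H\"older bounds), so the cylinder average may be replaced by the pointwise value $S_{n}\widetilde f_{i}^{\theta}(X)$ with another $O(1)$ error. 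Substituting into the definition of $I^{n}_{ij}(\theta)$ in \eqref{eq:3.11} and using $\int|S_{n}\widetilde f_{i}^{\theta}|\d\nu_{\theta}=O(\sqrt n)$ (via the $\psi$-mixing CLT for $\nu_{\theta}$), I obtain
\[ \tfrac{1}{n} I^{n}_{ij}(\theta)=\tfrac{1}{n}\int S_{n}\widetilde f_{i}^{\theta}\, S_{n}\widetilde f_{j}^{\theta} \d\nu_{\theta}+O(n^{-1/2}). \]
Existence of the limit then follows from $\d\mu_{\theta}/\d\nu_{\theta}=\varphi_{\theta}$ being a bounded H\"older function with $\int(1/\varphi_{\theta})\d\mu_{\theta}=1$: expanding the double Birkhoff sum and using the $\psi$-mixing of $\mu_{\theta}$ to decouple $1/\varphi_{\theta}$ from far-apart occurrences of $f_{i}\circ\sigma^{k}$, $f_{j}\circ\sigma^{l}$ (diagonal contributions being $O(1/\sqrt n)$) identifies the limit as $\Sigma^{\mu_{\theta}}_{ij}$, so $\Sigma^{\nu_{\theta}}=\Sigma^{\mu_{\theta}}$.

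For the second assertion, which I read in the asymptotically equivalent form of Theorem~\ref{theo:2.8}(4), namely $n\int\lvert\widehat\theta_{n}-\theta\rvert^{2}\d\mu_{\theta}\to\E[N^{t}(G^{-1}(N))^{2} N]$, the plan is to combine convergence in distribution with uniform integrability. Proposition~\ref{prop:4.4} provides the representation $\sqrt{n}\,(\widehat\theta_{n}-\theta)=G^{-1}(N_{n})\,N_{n}^{t}+o(1)$ with $N_{n}\coloneqq(n^{-1/2}S_{n}\widetilde f_{i}^{\theta})_{i=1}^{d}$ and $N_{n}\Rightarrow N\sim\mathcal N(0,\Sigma^{\mu_{\theta}})$ under $\mu_{\theta}$. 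By Lemma~\ref{lem:4.3}, $G(N)$ is a.\,s.\@ invertible and $m\mapsto G^{-1}(m)m^{t}$ is $\mathcal N(0,\Sigma^{\mu_{\theta}})$-a.\,s.\@ continuous, so the continuous mapping theorem yields $n\lvert\widehat\theta_{n}-\theta\rvert^{2}\Rightarrow N^{t}(G^{-1}(N))^{2} N$.

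The main obstacle is upgrading this distributional convergence to convergence of means via uniform integrability of $n\lvert\widehat\theta_{n}-\theta\rvert^{2}$. I would split $\Omega$ into a good set $\mathcal G_{n}\coloneqq\{\mu_{\theta}\in\Theta_{n}^{\eta_{n}}(\X_{n})\}$ and its complement. The large-deviation estimate from Lemma~\ref{lem:4.1} gives $\mu_{\theta}(\mathcal G_{n}^{c})\le\sum_{i}\e^{-nI_{i}(\eta_{n}^{2})}$, while the truncation in Definition~\ref{def:2.1} enforces $\lvert\widehat\theta_{n}-\theta\rvert^{2}\le C/\eta_{n}^{2}$ everywhere; choosing $\eta_{n}\downarrow 0$ slowly enough makes $nC/\eta_{n}^{2}\cdot\mu_{\theta}(\mathcal G_{n}^{c})\to 0$. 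On $\mathcal G_{n}$ the Taylor representation is valid, but $\lVert G^{-1}(N_{n})\rVert_{\mathrm{op}}$ can blow up near the singular locus of Lemma~\ref{lem:4.3}. Since that locus lies in a finite union of proper linear subspaces and $N_{n}$ has an approximately Gaussian distribution with full-rank covariance $\Sigma^{\mu_{\theta}}$, one expects any $\epsilon$-neighborhood of the singular set to carry probability $O(\epsilon)$; combined with polynomial moment bounds for $\lvert N_{n}\rvert$ from the $\psi$-mixing CLT, this should yield uniform integrability of $\lvert G^{-1}(N_{n}) N_{n}^{t}\rvert^{2}$. Making this quantitative density control robust for the non-Gaussian $N_{n}$ is where I expect the main technical work to lie.
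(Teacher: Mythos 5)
Your treatment of the first assertion matches the paper's: Lemma \ref{lem:3.4} plus the bounded oscillation of $S_n\widetilde f_i^\theta$ on $n$-cylinders reduces $\frac1n I^n_{ij}$ to $\frac1n\int S_n\widetilde f_i^\theta S_n\widetilde f_j^\theta\,\d\nu_\theta$ up to a vanishing error, and the paper likewise passes to $\mu_\theta$ via the density $\varphi_\theta^{-1}$ and decouples it from the bulk of the Birkhoff sums using Theorem \ref{theo:3.10} and mixing. Your extra observation that $\int\varphi_\theta^{-1}\d\mu_\theta=1$ forces $\Sigma^{\nu_\theta}=\Sigma^{\mu_\theta}$ is a correct and sharper conclusion than the paper records.

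For the second assertion there is a genuine problem, located exactly where you flagged it. The paper's own proof consists of the formal identity $\E_{\mu_\theta}[(\widehat\theta_n-\theta)^t(\widehat\theta_n-\theta)]=\E_{\mu_\theta}[\mathbf S_n^t(G^{-1}(\mathbf S_n))^2\mathbf S_n]$ together with the remark that the passage to the limit is ``standard''; it does not address it. Your plan is to supply it via uniform integrability, but uniform integrability cannot hold here: since $M^tM$ has rank one, $\det(M^tM-\Sigma^{\mu_\theta})=(-1)^d\det(\Sigma^{\mu_\theta})\bigl(1-M(\Sigma^{\mu_\theta})^{-1}M^t\bigr)$, so the singular locus of $G$ is the ellipsoid $\{m: m(\Sigma^{\mu_\theta})^{-1}m^t=1\}$, a codimension-one set on which the Gaussian density of $N$ is strictly positive, while $\|G^{-1}(m)\|^2$ blows up like the inverse square of the distance to it. An $\epsilon$-neighborhood carries probability of order $\epsilon$ against an integrand of order $\epsilon^{-2}$, so $\E[N^t(G^{-1}(N))^2N]=+\infty$ (already for $d=1$: $\int x^2(x^2-\sigma^2)^{-2}\phi(x)\d x$ diverges at $x=\pm\sigma$); this is consistent with the caveat in Theorem \ref{theo:2.8}(4) that the right-hand side ``is allowed to be infinite''. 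Hence the asserted equality can only be read in $[0,\infty]$, and the correct route from your distributional convergence is not uniform integrability but the Fatou/portmanteau inequality for nonnegative weak limits, which gives $\liminf_n n\int|\widehat\theta_n-\theta|^2\d\mu_\theta\ge \E[N^t(G^{-1}(N))^2N]=\infty$, so both sides are $+\infty$. Your proposed ``quantitative density control'' step would, if carried out, disprove uniform integrability rather than establish it; that branch should be replaced by the Fatou argument (the truncation and large-deviation part of your decomposition is fine but then unnecessary).
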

\begin{proof} Recall that by Lemma \ref{lem:3.4} for any cylinder $C$ of length $n$
\[ D_i[\log\nu_s(C)](t) = \frac 1{\nu_t(C)}\int_C S_n\widetilde f_i^t \d\nu_t+ O(1).\] 
This term is asymptotically equivalent to $S_n\widetilde f_i^t(x)$ for any $x\in C$, hence
\begin{eqnarray*}
\lim_{n\to\infty} \frac 1n I^n_{i,j}(t)&=& \lim_{n\to \infty}\frac 1n  \int S_n\widetilde f_i^t(x)S_n\widetilde f_j^t(x) \d\nu_t(x)\\
&=&\lim_{n\to \infty}\frac 1n  \int S_n\widetilde f_i^t(x)S_n\widetilde f_j^t(x) \varphi_t^{-1}\d\mu_t(x)\\
&\eqqcolon &\Sigma_{i,j}^{\nu_t}
\end{eqnarray*}
exists. This is so since $Z_n=n^{-1}   S_n\widetilde f_i^t(x)S_n\widetilde f_j^t(x)$ converges weakly to some $Z$ under $\mu_t$ by Theorem \ref{theo:3.10}, hence $(\varphi_t,Z_n)$ converges weakly to the product distribution of $\varphi_t$ and $Z$. Moreover the inner product on $\mathbb R^2$ is a continuous function. Alternatively, one may use the $\phi$-mixing properties here.

The last statement is standard. Let ${\bf S}_n=(\frac 1{\sqrt{n}} S_n\widetilde f_i^\theta)_{1\le i\le d})$
\begin{eqnarray*}
E_{\mu_\theta}[(\widehat \theta_n-\theta)^t(\widehat \theta_n-\theta)] &=&
E_{\mu_\theta}[ (G^{-1} ({\bf S}_n)) {\bf S}_n)^t G^{-1} ({\bf S}_n)) \bf S_n]\\
&=&
 E_{\mu_\theta} [{\bf S}_n^t (G^{-1} ({\bf S}_n))^2 {\bf S_n}].
\end{eqnarray*}
\end{proof}

\begin{proof}[Proof of Theorem \ref{theo:2.4}]  This follows from Lemma \ref{lem:4.1} and Proposition \ref{prop:4.2}. 
\end{proof}
 \begin{proof}[Proof of Theorem \ref{theo:2.8}] ~ 
\begin{description}
\item[Part  1.] follows from  Theorems \ref{theo:2.4}, \ref{theo:2.6}  and Proposition \ref{prop:4.2}.

\item[Part  2.] The first statement is the well-known central limit theorem for vectors of H\"older continuous functions under mixing Gibbs measures. The remaining part follows from Lemma \ref{lem:4.3}.

\item[Part  3.] follows from Proposition \ref{prop:4.4}.

\item[Part  4.] follows from the Cram\'er--Rao inequality and the continuity of the operator $G^{-1}(n^{-1/2}S_n\tilde f_i)_{1\le i\le d})$.
\end{description}
The additional assertion is in Lemma \ref{lem:4.5} as well.
\end{proof}

\section{Maximum likelihood test}\label{sec:5}

In this section we discuss the application of the maximum likelihood estimator for constructing test statistics about the parameter $\theta$.
Thus we keep the statistical model of a family of Gibbs measures $\nu_\theta$, $\theta\in \Theta\subset \mathbb R^d$ given by their potentials $F_\theta= f_0+\sum_{i=1}^d \theta_if_i$ as in Section \ref{sec:2}. For the test problem $H_0\subset \Theta$ against $H_1=\Theta\setminus H_0$ based on observations $\X_{n}\coloneqq X_0,\ldots ,X_{n-1}$, the maximum likelihood  test is defined by its region of rejection
\[\frac {L_n^{H_0}(X_0,\ldots ,X_{n-1})}{L_n(X_0,\ldots ,X_{n-1})}\le c\] 
where $c<1$ (cf.\@ (\ref{eq:2.1}) for the definition of $\Theta_{n}^{\eta_{n}}(\X_n)$),
\[ L_n^{H_0}(\X_n):=\max_{\theta\in H_0\cap\Theta_{n}^{\eta_{n}}(\X_n)}  \nu_\theta([\X_n])\; \mbox{ and }
  L_n(\X_n):=\max_{\theta\in \Theta_{n}^{\eta_{n}}(\X_n)}  \nu_\theta([\X_n]).
 \]
As explained in the introduction (Section \ref{sec:1}) this implies that the test problem is adapted to the situation when only $X_0,\ldots, X_{n-1}$ are observed.

This means that the test statistic is given by the quotient of the likelihoods as in Section \ref{sec:2} for the parameter spaces $H_0$ and $\Theta$ respectively. The constant $c$ determines the significance level, which increases with $c$.

The simplest case of the test problem is when hypothesis and alternative are simple. In this case the Neyman-Pearson test is optimal as explained and investigated in \cite{DLL}. Clearly, in this case the maximum likelihood test reduces to the Neyman-Pearson test statistic.
Thus
\begin{proposition}\label{prop:5.1}
The maximum likelihood test is a Neyman-Pearson test when restricted to  the test problem $\theta_0\in H_0$ against $\theta_1\not\in H_0$ for every distinct pair $\theta_i\in \Theta$, $i=0,1$.
\end{proposition}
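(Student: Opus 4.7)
The plan is a direct computation identifying the two rejection regions. Specialize to the simple-versus-simple case $\Theta=\{\theta_0,\theta_1\}$, so that $H_0=\{\theta_0\}$ and $H_1=\{\theta_1\}$. The first step is to unpack the definitions of $L_n^{H_0}$ and $L_n$ for this two-point parameter space. Since the supremum in each is taken over a subset of $\{\theta_0,\theta_1\}\cap \Theta_n^{\eta_n}(\X_n)$, I will argue that under either $\nu_{\theta_0}$ or $\nu_{\theta_1}$ the true parameter lies in $\Theta_n^{\eta_n}(\X_n)$ eventually almost surely, by Lemma \ref{lem:4.1} applied to each of the two potentials; hence on a set of full measure (for whichever $\nu_{\theta_i}$ is being used to compute probabilities of Type I/II errors) the relevant maximum is attained and nontrivial.

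Next, I would observe that for those $\X_n$ with both $\theta_0,\theta_1\in\Theta_n^{\eta_n}(\X_n)$,
\[
L_n^{H_0}(\X_n)=\nu_{\theta_0}([\X_n]),\qquad L_n(\X_n)=\max\bigl\{\nu_{\theta_0}([\X_n]),\nu_{\theta_1}([\X_n])\bigr\},
\]
so that the maximum-likelihood statistic satisfies
\[
\frac{L_n^{H_0}(\X_n)}{L_n(\X_n)}=\min\!\left\{1,\ \frac{\nu_{\theta_0}([\X_n])}{\nu_{\theta_1}([\X_n])}\right\}.
\]
If only $\theta_0$ (resp.\ only $\theta_1$) lies in $\Theta_n^{\eta_n}(\X_n)$, the ratio is $1$ (resp.\ equal to $\nu_{\theta_0}([\X_n])/\nu_{\theta_1}([\X_n])$ formally, but this case has vanishing probability under both hypotheses by Lemma \ref{lem:4.1} and may be absorbed into the null set).

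Finally, since the cutoff is $c<1$, the event $\{L_n^{H_0}/L_n\le c\}$ forces $L_n^{H_0}/L_n<1$, and thus coincides with
\[
\left\{\frac{\nu_{\theta_0}([\X_n])}{\nu_{\theta_1}([\X_n])}\le c\right\}=\left\{\frac{\nu_{\theta_1}([\X_n])}{\nu_{\theta_0}([\X_n])}\ge \tfrac1c\right\},
\]
which is exactly the rejection region of the Neyman–Pearson test between $\nu_{\theta_0}$ and $\nu_{\theta_1}$ with threshold $1/c>1$. The only subtlety, and hence the main (minor) obstacle, is the bookkeeping of the constraint set $\Theta_n^{\eta_n}(\X_n)$: one must justify that restricting the maxima to this set does not change the rejection region on a set of full probability, which is exactly what Lemma \ref{lem:4.1} delivers for each of the two distributions under consideration.
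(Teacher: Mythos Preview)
Your proposal is correct and is exactly the computation the paper has in mind; in fact the paper does not give a formal proof at all, stating only that ``Clearly, in this case the maximum likelihood test reduces to the Neyman--Pearson test statistic'' before asserting the proposition. Your explicit identification of $L_n^{H_0}/L_n$ with $\min\{1,\nu_{\theta_0}([\X_n])/\nu_{\theta_1}([\X_n])\}$ and the observation that $c<1$ forces the rejection region to coincide with the Neyman--Pearson region is the natural unpacking of this claim, and your handling of the constraint set via Lemma~\ref{lem:4.1} is an appropriate (and more careful) treatment of a detail the paper suppresses.
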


\subsection{Simple hypothesis}\label{sec:5.1}
As a first application of our main result Theorem \ref{theo:2.8} we treat the special case of $H_0:=\{\theta^0\}\subset \Theta$. This means that the null hypothesis consists of a single probability $\nu_{\theta^0}$. 
 \begin{example}\label{ex:5.2} Take $\theta^0$ to be a parameter satisfying  $P(F_{\theta^0})=0$. This parameter is often unique and in some cases represents the Hausdorff dimension of an attractor. For example take $f_0=0$ and $f_1= \log |R'|$ where $R$ is a hyperbolic differential map, like a rational map on the Riemann sphere. This then follows from the Bowen-Manning-McCluskey formula (see \cite{D3} for background and more details). Hence the hypothesis $H_0$ serves to test whether an observed system  is in that state, and, moreover, one may use the MLE to estimate the Hausdorff dimension of its attractor once it is known that the system has vanishing pressure.
\end{example}
If $n$ is large enough, that is if the maximum likelihood estimators are within the region of $[-\eta_n^{-1},\eta_n^{-1}]^d$, then
\begin{eqnarray*} L_n^{H_0}(X_0,\ldots ,X_{n-1})&=& \nu_{\theta^0}([X_0,\ldots ,X_{n-1}])\\
&=& \int \e^{-nP(F_{\theta^0})+S_n(F_{\theta^0}(\X_nx))} \d\nu_{\theta^0}(x)
\end{eqnarray*}
and --using Taylor expansion as in the proof of Proposition \ref{prop:4.4}--
\begin{equation}\label{eq:5.1}
 L_n(\X_n)= \nu_{\hat\theta}(\X_n)= \nu_{\theta^0}(\X_n)
  +\sum_{|\lambda|=1}^\infty D_\lambda [\nu_s([\X_n]](\theta^0) \prod_{i=1}^d (\hat\theta_j-\theta_{j}^0)^{k_j}.
 \end{equation} 
  \begin{proposition}\label{prop:5.3}  Let $f_0,f_1,\ldots,f_d$ be H\"older continuous, $F\coloneqq(f_1,\ldots,f_d)$ be linearly independent as cohomology classes and $\X_{n}\coloneqq(X_0,\ldots ,X_{n-1})$. Then, under $\mu_{\theta^0}$, the distributions of
 \[ \frac1{\nu_{\theta^0}([\X_{n}])} \sum_{|\lambda|=1}^\infty D_\lambda [\nu_s([\X_{n}]](\theta^0) \prod_{i=1}^d (\hat\theta_j-\theta_{j}^0)^{k_j}\] 
  converge, as $n\to\infty$,  weakly to the distribution of 
 \[ Z=\sum_{i=1}^d N_iZ_i,\] 
 where $Z_i$ denotes the $i$-th coordinate of $G^{-1}(N)N$ and where $N$ is the limiting normal distribution for $\mu_{\theta^0}$ derived in Theorem \ref{theo:2.8}.
 \end{proposition}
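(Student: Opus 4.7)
My plan is to interpret the displayed sum as $\nu_{\widehat\theta_n}([\X_n])/\nu_{\theta^0}([\X_n])-1$ via the Taylor expansion \eqref{eq:5.1} divided through by $\nu_{\theta^0}([\X_n])$, and to show that its weak limit is controlled by the leading linear contribution.

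First I would isolate the $|\lambda|=1$ part
\[
A_n^{(1)}\coloneqq \sum_{i=1}^d \frac{D_i[\nu_s([\X_n])](\theta^0)}{\nu_{\theta^0}([\X_n])}\,(\widehat\theta_{n,i}-\theta_i^0).
\]
By Lemma \ref{lem:3.3}(1) each coefficient equals $\nu_{\theta^0}([\X_n])^{-1}\int_{[\X_n]}S_n\widetilde f_i^{\theta^0}\,\d\nu_{\theta^0}+O(1)$. The Gibbs property \eqref{eq:3.1} together with the H\"older bound $|S_nf_i(x)-S_nf_i(y)|=O(1)$ for $x,y\in[\X_n]$ (bounded distortion within a cylinder) then allows me to replace the integral average by the pointwise value $S_n\widetilde f_i^{\theta^0}(x)$ at any representative $x\in[\X_n]$, up to an $O(1)$ error.

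Next, introducing the natural $n^{-1/2}$ scaling I would write
\[
A_n^{(1)}=\sum_{i=1}^d\bigl(n^{-1/2}S_n\widetilde f_i^{\theta^0}(x)\bigr)\cdot\sqrt n\,(\widehat\theta_{n,i}-\theta_i^0)+O(n^{-1/2}).
\]
The central limit theorem for H\"older observables under mixing Gibbs measures (Theorem \ref{theo:2.8}(2)) gives $n^{-1/2}(S_n\widetilde f_i^{\theta^0})_{1\le i\le d}\Rightarrow N\sim \mathcal N(0,\Sigma^{\mu_{\theta^0}})$ under $\mu_{\theta^0}$, while Proposition \ref{prop:4.4} displays $\sqrt n(\widehat\theta_n-\theta^0)$ as asymptotically equivalent to $G^{-1}\bigl(n^{-1/2}S_n\widetilde f^{\theta^0}\bigr)\bigl(n^{-1/2}S_n\widetilde f^{\theta^0}\bigr)^t$. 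Because both limits are driven by the same randomness, the pair converges jointly to $(N,Z)$ with $Z=G^{-1}(N)N^t$, and the continuous-mapping theorem yields $A_n^{(1)}\Rightarrow \sum_{i=1}^d N_iZ_i$.

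The remaining task, and the main obstacle, is to show that the higher-order Taylor contributions do not disturb this limit. Extending Lemma \ref{lem:3.3} to mixed partials of order $|\lambda|\ge 2$ via the same analytic-perturbation argument based on Proposition \ref{prop:3.1} gives the heuristic bound $D_\lambda[\nu_s([\X_n])](\theta^0)/\nu_{\theta^0}([\X_n])=O(n^{|\lambda|/2})$, which paired with $\prod_j(\widehat\theta_{n,j}-\theta_j^0)^{k_j}=O(n^{-|\lambda|/2})$ makes each individual term only $O(1)$. To absorb this summation I would exploit the first-order optimality $D_i[\nu_s([\X_n])](\widehat\theta_n)=0$ (already central in the proof of Proposition \ref{prop:4.4}) to re-expand about $\widehat\theta_n$, where the linear term vanishes and only the quadratic correction survives; combining this with the identity $G(N)Z=N^t$ (and the asymptotic joint behaviour of $(\widehat\theta_n-\theta^0)$ and the normalized ergodic sums) forces the cumulated higher-order contribution to reproduce the claimed scalar $\sum_{i=1}^d N_iZ_i$.
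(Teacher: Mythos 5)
Your analysis of the leading term is exactly the paper's argument: isolate the $|\lambda|=1$ contribution, use Lemma~\ref{lem:3.3}(1) together with the Gibbs property and H\"older continuity to replace $\nu_{\theta^0}([\X_n])^{-1}D_i[\nu_s([\X_n])](\theta^0)$ by $S_n\widetilde f_i^{\theta^0}(x)+O(1)$ for any $x\in[\X_n]$, and then observe that $n^{-1/2}(S_n\widetilde f_i^{\theta^0})_i$ and $\sqrt n(\widehat\theta_n-\theta^0)$ are driven by the same normalized Birkhoff sums (Proposition~\ref{prop:4.4}), so the product converges jointly and the continuous-mapping theorem gives $\sum_i N_iZ_i$. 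That part is correct and is what the paper does.

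The gap is in your treatment of the tail $|\lambda|\ge 2$, and your own estimate exposes it: with $D_\lambda[\nu_s([\X_n])](\theta^0)=O(n^{|\lambda|/2}\nu_{\theta^0}([\X_n]))$ and $\|\widehat\theta_n-\theta^0\|=O(n^{-1/2})$ each tail term is $O(1)$, i.e.\ \emph{not} asymptotically negligible, so something beyond a crude bound is needed. But the remedy you sketch is not an argument and is internally inconsistent. If, as you claim, $A_n^{(1)}\Rightarrow\sum_iN_iZ_i$ and the ``cumulated higher-order contribution'' also ``reproduces the claimed scalar,'' the full sum would converge to $2\sum_iN_iZ_i$, not to $\sum_iN_iZ_i$. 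Worse, the re-expansion about $\widehat\theta_n$ that you invoke kills the linear term entirely (by first-order optimality) and leaves $-\tfrac12\,\delta^tH(\widehat\theta_n)\delta/\nu_{\theta^0}([\X_n])$ with $\delta=\widehat\theta_n-\theta^0$; by \eqref{eq:3.4} this quadratic form converges to a \emph{different} constant multiple of $N G^{-1}(N)N^t$ than the linear term does, which shows concretely that the $|\lambda|\ge2$ terms cannot simply be absorbed, and that ``only the quadratic correction survives'' does not lead back to the stated limit by the route you describe. To close the argument you must either prove that the normalized tail converges to $0$ in probability (which requires uniform control of all higher normalized derivatives $D_\lambda[\nu_s([\X_n])](\theta^0)/\nu_{\theta^0}([\X_n])$ of order $o(n^{|\lambda|/2})$ for $|\lambda|\ge2$, a statement Lemma~\ref{lem:3.3} does not deliver in the form you use it), or track the exact limit of every order and sum the series. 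The paper itself records the tail as $O(\|\widehat\theta_n-\theta^0\|^2)$ and retains only the $|\lambda|=1$ term; you have correctly flagged that this step is delicate, but your proposal does not actually resolve it, so as written it does not establish the claimed weak convergence.
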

 \begin{proof} Fix $n$ and let $\hat\theta$ denote the maximum likelihood estimator for $n$. First observe that
 \begin{eqnarray*}
 &&  \frac1{\nu_{\theta^0}([\X_{n}])} \sum_{|\lambda|=1}^\infty D_\lambda [\nu_s([\X_{n}])](\theta^0) \prod_{i=1}^d (\hat\theta_j-\theta_j^0)^{k_j} =\\
 &&\qquad
\sum_{i=1}^d \frac{D_i[\nu_s([\X_{n}])](\theta^0)}{\nu_{\theta^0}([\X_{n}])}(\hat\theta_i-\theta_i^0)+ O(\| \hat\theta-\theta^0\|^2).
\end{eqnarray*}
Under $\mu_{\theta^0}$, by Theorem \ref{theo:2.8} $\sqrt{n} (\hat\theta-\theta^0)$ converges weakly to the distribution of  $G^{-1}(N)N$ where $N$ has a normal distribution with expectation $0$ and covariance matrix $\Sigma^{\mu_{\theta^0}}=\Cov_{\mu_{\theta_{0}}}(F)$. 
In fact $N$ is the distributional limit of the random vector 
\[ \frac 1{\sqrt{m}} \left(S_m\tilde f_i^{\theta^0}\right)_{1\le i\le d},\] 
as shown at the end of the proof of Proposition \ref{prop:4.4}.
Moreover, by Lemma \ref{lem:3.3}, (\ref{eq:3.4})
\[ \frac{D_i [\nu_s([\X_{n}])](\theta^0)} {\sqrt{n} \nu_{\theta^0}([\X_{n}])}= \frac 1{\sqrt{n}}S_n\tilde f_i^{\theta^0}+ o(1)\] 
for every $1\le i\le d$.
Hence
\begin{equation*}
   \frac1{\nu_{\theta^0}([\X_{n}])} \sum_{|\lambda|=1}^\infty D_\lambda [\nu_s([\X_{n}])](\theta^0) \prod_{i=1}^d (\hat\theta_j-\theta_j^0)^{k_j} 
\end{equation*}
has the same limiting distribution as 
\[\sum_{i=1}^d \left(\frac 1{\sqrt{n}} S_n\tilde f_i^{\theta^0}\right)\left(G^{-1}\left(\left(\frac
 1{\sqrt{n}} \tilde S_nf_l^{\theta^0}\right)_{1\le l\le d}\right) \left(\frac 1{\sqrt{n}} S_n\tilde f_l^{\theta^0}\right)_{1\le l\le d}\right)_i,\] 
 which is the distribution of the random quadratic form
\[ \sum_{i=1}^d N_i(G^{-1}(N)N)_i.\] 
 \end{proof}
 \begin{remark}\label{rem:5.4} Since $\mu_{\theta^0}$ has the known potential function $F_{\theta^0}$ the matrix $\Sigma^{\mu_{\theta^0}}$ can be calculated with arbitrary precision from the transfer operator. This implies that the quantiles of the random quadratic form in the proposition can be determined by 
 \[\mathrm{Prob}\left(\sum_{i,j=1}^d N_i G^{-1}(N)_{ij} N_j\le c\right)=\alpha.
 \]
 \end{remark}
 \begin{theorem}\label{theo:5.5} For $0<\alpha<1$ and  $\X_{n}\coloneqq(X_0,\ldots ,X_{n-1})$  let 
 \[ \left\{L_n^{H_0}(\X_{n})\le c_n L_n(\X_{n})\right\}\] 
 be the rejection region for the maximum likelihood test at level $\alpha$ with hypothesis $H_0=\{\theta^0\}\subset \Theta$. Then
 \[ \lim_{n\to\infty} c_n=\frac 1{z_\alpha+1},\] 
 where $z_\alpha$ denotes the lower $\alpha$-quantile of the limiting distribution of the random quadratic form 
 \begin{equation}\label{eq:5.2}
 \Xi(N,N)=\sum_{i,j=1}^d N_i(G^{-1}(N))_{ij}N_j
 \end{equation}
  derived in Proposition \ref{prop:5.3}.
 \end{theorem}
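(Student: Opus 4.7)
The plan is to reduce the rejection inequality to a statement about the ratio $R_n/\nu_{\theta^0}([\X_n])$, whose limit law is already identified by Proposition \ref{prop:5.3}, and then extract $c_n$ from the defining level-$\alpha$ condition.

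First, by \eqref{eq:5.1} write $L_n(\X_n) = \nu_{\theta^0}([\X_n]) + R_n$ with
\[ R_n \coloneqq \sum_{|\lambda|=1}^{\infty} D_\lambda[\nu_s([\X_n])](\theta^0)\prod_{j=1}^{d}(\hat\theta_j-\theta_j^0)^{k_j}. \]
Because $\hat\theta_n$ is the maximizer of $t \mapsto \nu_t([\X_n])$ one has $R_n \ge 0$, whence
\[ \frac{L_n^{H_0}(\X_n)}{L_n(\X_n)} = \frac{1}{1 + R_n/\nu_{\theta^0}([\X_n])}, \]
and the rejection event $\{L_n^{H_0}/L_n \le c_n\}$ coincides with $\{R_n/\nu_{\theta^0}([\X_n]) \ge 1/c_n - 1\}$. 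Thus the level-$\alpha$ calibration of the test amounts to
\[ \P_{\theta^0}\!\left(\frac{R_n}{\nu_{\theta^0}([\X_n])} \ge \frac{1}{c_n}-1\right) = \alpha. \]

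Second, Proposition \ref{prop:5.3} gives that $R_n/\nu_{\theta^0}([\X_n])$ converges weakly under $\mu_{\theta^0}$ to $\Xi(N,N)$ defined in \eqref{eq:5.2}. Since $\nu_{\theta^0}$ and $\mu_{\theta^0}$ are related through the H\"older factor $\varphi_{\theta^0}$, bounded above and away from zero, and since $N$ as a CLT-limit of sums depending on all of $X_0,\ldots,X_{n-1}$ is asymptotically independent of the initial coordinate through which $\varphi_{\theta^0}$ enters, the same weak limit holds under $\nu_{\theta^0}$, consistent with the dual formulation in Theorem \ref{theo:2.8}. Combining with the level-$\alpha$ equation and the standard fact that quantiles of a weakly convergent sequence converge at each continuity point of the limit distribution function, one obtains $1/c_n-1\to z_\alpha$, and hence $c_n \to 1/(z_\alpha+1)$, where $z_\alpha$ is the unique real number with $\P(\Xi(N,N)\ge z_\alpha)=\alpha$ (the convention of \lq\lq lower $\alpha$-quantile\rq\rq{} used in the statement).

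The principal technical obstacle is verifying that the distribution function of $\Xi(N,N)$ has no atom at $z_\alpha$, so that the quantile convergence above is automatic. This follows from two observations: first, by Theorem \ref{theo:2.6} together with the linear-independence-as-cohomology-classes hypothesis, $N\sim \mathcal{N}(0,\Sigma^{\mu_{\theta^0}})$ has a smooth Lebesgue density on $\R^d$; second, by Lemma \ref{lem:4.3} the matrix $G(x)=x^tx-\Sigma^{\mu_{\theta^0}}$ is invertible off a Lebesgue-null set, and on its complement the map $x\mapsto xG^{-1}(x)x^t$ is real-analytic and non-constant (e.g.\ it vanishes at $x=0$ while taking positive values on diagonal perturbations of eigenvectors of $\Sigma^{\mu_{\theta^0}}$), so each of its level sets is Lebesgue-null, giving that $\Xi(N,N)$ has an absolutely continuous distribution on $\R$.
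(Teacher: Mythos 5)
Your argument is correct and follows essentially the same route as the paper's proof: expand $L_n(\X_{n})$ via (\ref{eq:5.1}), rewrite the rejection event as $\{R_n/\nu_{\theta^0}([\X_{n}])\ge c_n^{-1}-1\}$, invoke Proposition \ref{prop:5.3} for the weak limit $\Xi(N,N)$, and pass to the limit of the level-$\alpha$ thresholds. You are in fact more careful than the paper on the one point it leaves implicit --- that the law of $\Xi(N,N)$ has no atom at $z_\alpha$, so that the quantiles actually converge --- and your argument via Lemma \ref{lem:4.3} together with the Lebesgue-null level sets of the non-constant rational map $x\mapsto xG^{-1}(x)x^{t}$ closes that gap cleanly.
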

 \begin{proof} By equation (\ref{eq:5.1})
 the inequality defining the rejection region is
 \begin{eqnarray*}
 \frac{L_n(\X_{n})}{L_n^{H_0}(\X_{n})}= \frac{\nu_{\theta^0}([\X_{n}])+ \sum_{1=1}^d D_i[\nu_s([\X_{n}])](\hat \theta_i-\theta_i^0)}{\nu_{\theta^0}([\X_{n}]}+o(1) \ge c_n^{-1}
 \end{eqnarray*}
   one deduces that $c_n^{-1}-1$ is given by 
 \[ \mu_{\theta^0}\left( \sum_{1=1}^d \frac{D_i[\nu_s([\X_{n}])](\theta^0)(\hat\theta_i-\theta_i^0)}{\nu_{\theta^0}([\X_{n}]}\ge c_n^{-1}-1\right)=\alpha.\] 
 If $z_\alpha $ denotes the lower $\alpha$-quantile of $\Xi$, i.\,e.
 \[ \mathrm{Prob}(\Xi(N,N)\ge z_\alpha)=\alpha,\] 
 then by Proposition \ref{prop:5.3}
 \[ \lim_{n\to\infty} (c_n^{-1}-1)= z_\alpha.\] 
 \end{proof}
 
 \subsection{Testing for vanishing influence}\label{sec:5.2}
  
  Similar to the derivative of the test region in the previous subsection
  we can proceed for the hypotheses 
  \[ H_0^k=\{\theta\in \Theta: \theta_k=0\},\] 
   where $1\le k\le d$ is fixed, with some modifications.  The idea is to estimate $\theta$ in the family $H_0$ by some $\hat \theta^0$ and then apply the method in Section \ref{sec:5.1} to the testing problem with the simple hypothesis $\big\{\hat\theta^0\big\}$. This leads to the following definition.
  \begin{definition}\label{def:5.6}
  A modified maximum likelihood test for $H^k_0$ ($1\le k\le d$) based on the observations $\X_{n}\coloneqq(X_0,\ldots ,X_{n-1})$ has a region of rejection of the form
  \[\left\{\frac{L_n^{H_0^k}(\X_{n})}{L_n(\X_{n})}\le c(\X_{n})\right\}.\]
  \end{definition}
  In order to obtain the asymptotic relation of the region in dependence of the significance level, one needs to estimate the parameters in the asymptotic distributional limit under the null hypothesis, that is estimation of the covariance matrix $\Sigma^{\mu_{\hat\theta^0}}$ where $\hat\theta^0$ denotes the estimated parameter in $H_0^d$. This leads to a critical value $c$ which depends on the observation. For simplicity we may assume $k=d$ and write $H_0$ for $H_0^d$.
 \begin{theorem}\label{theo:5.7}  Let $f_0,f_1,\ldots,f_d$ be H\"older continuous function,  $F\coloneqq(f_1,\ldots,f_d)$ be  linearly independent as cohomology classes and $\X_{n}\coloneqq(X_0,\ldots ,X_{n-1})$.
 The modified maximum likelihood test for $H_0=\{\theta_d=0\}$ against $H_1=\{\theta_d\ne 0\}$ at significance level $\alpha$ is defined by its region of rejection
 \[
 \left\{L_n^{H_0}(\X_{n})\le c_n(\X_{n}) L_n(\X_{n})\right\},
 \]  
 where 
 \[ c_n(\X_{n})= \frac 1{1+z_n(\X_{n})}\] 
 with $z_n=z_n(\X_{n})$ defined as the $\alpha$-quantile of the limiting distribution $ \Xi(\hat N,\hat N)$ where $\hat N$ is the centered normal distribution with covariance matrix $\hat\Sigma=\Sigma^{\mu_{\hat\theta^0}}\coloneqq \Cov_{\mu_{\theta^{0}}}(F)$. 
 Under $\mu_\theta$, $\theta\in H_0^d$ and for the significance level $\alpha\in (0,1)$ the random function $c_n$ satisfies
 \[\lim_{n\to\infty} c_n(\X_{n}) = \frac 1{z_\alpha+1},\] 
 with (cf. (\ref{eq:5.2}))
 \[ \mbox{\rm Prob}(\Xi(N,N)\le z_\alpha)=\alpha\] 
 where $N$ is the centered normal distribution with covariance  $\Sigma^{\mu_{\theta}}=\Cov_{\mu_{\theta}}(F)$.
 \end{theorem}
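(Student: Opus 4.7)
The plan is to reduce Theorem~\ref{theo:5.7} to the simple-hypothesis setting of Section~\ref{sec:5.1} by first estimating the nuisance parameter $\theta\in H_0^d$ and then verifying that every quantity entering $c_n(\mathbb{X}_n)$ depends continuously on that estimator. All convergences will then follow from consistency of the restricted MLE combined with continuous dependence of the thermodynamic data.

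First, I would apply the MLE theory to the restricted $(d-1)$-dimensional family $\{F_{(t_1,\ldots,t_{d-1},0)} : (t_1,\ldots,t_{d-1})\in\mathbb{R}^{d-1}\}$. This is again a H\"older-parametric family of Gibbs potentials, and $(f_1,\ldots,f_{d-1})$ inherits linear independence as cohomology classes from the full vector $F$. Hence Theorems~\ref{theo:2.4} and \ref{theo:2.8}(1) produce an MLE $\hat\theta^0_n$ with $\hat\theta^0_n \to \theta$ almost surely under $\mu_\theta$ for every $\theta\in H_0^d$. Next, by Lemma~\ref{lem:3.2} the asymptotic covariance $\Sigma^{\mu_t}=\Cov_{\mu_t}(F)$ is (in the entries indexed by $1,\ldots,d$) the Hessian of the analytic pressure $t\mapsto P(F_t)$, so $t\mapsto\Sigma^{\mu_t}$ is continuous; together with the consistency above this yields $\hat\Sigma_n\coloneqq\Sigma^{\mu_{\hat\theta^0_n}}\to\Sigma^{\mu_\theta}$ almost surely.

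Finally, I would pass this continuity to the critical value $c_n(\mathbb{X}_n)$. Continuity of Gaussian laws in their covariance gives $\hat N_n\sim\mathcal{N}(0,\hat\Sigma_n)\Rightarrow N\sim\mathcal{N}(0,\Sigma^{\mu_\theta})$. By Lemma~\ref{lem:4.3} the map $M\mapsto G^{-1}(M)$ is well defined on a set of full $\mathcal{N}(0,\Sigma^{\mu_\theta})$-measure, and $\Xi(M,M)=\sum_{i,j} M_i(G^{-1}(M))_{ij}M_j$ is continuous there, so the continuous mapping theorem yields $\Xi(\hat N_n,\hat N_n)\Rightarrow\Xi(N,N)$. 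This in turn gives $z_n(\mathbb{X}_n)\to z_\alpha$, and hence $c_n(\mathbb{X}_n)=(1+z_n(\mathbb{X}_n))^{-1}\to(1+z_\alpha)^{-1}$, as required.

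The main obstacle I anticipate is the quantile step, since quantile convergence from weak convergence demands continuity of the limiting distribution function $x\mapsto\mathrm{Prob}(\Xi(N,N)\le x)$ at $z_\alpha$. Positive definiteness of $\Sigma^{\mu_\theta}$ from Theorem~\ref{theo:2.6} gives $N$ a Lebesgue density, and $G^{-1}$ is rational in $M$ off the Lebesgue-null exceptional set identified in Lemma~\ref{lem:4.3}, so $\Xi(N,N)$ is a real-analytic non-constant function of $N$ outside a null set; this forces the image law to be absolutely continuous and the required continuity to hold at every level $\alpha\in(0,1)$. A safer fallback, should one wish to avoid analytic-geometry arguments, is to approximate the prescribed level by continuity points of the limiting distribution function and invoke a Slutsky-type argument to recover convergence of $c_n(\mathbb{X}_n)$ to the stated deterministic limit.
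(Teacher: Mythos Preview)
Your proposal is correct and follows essentially the same route as the paper: establish consistency of the restricted estimator $\hat\theta^0$, deduce convergence of the estimated covariance $\hat\Sigma$ to $\Sigma^{\mu_\theta}$ by continuity in the parameter, pass this to weak convergence of the associated normals, and conclude $z_n\to z_\alpha$ and hence $c_n\to(1+z_\alpha)^{-1}$. The paper invokes continuity of $t\mapsto\int g\,\d\mu_t$ for H\"older $g$ to obtain convergence of the covariance, whereas you appeal to Lemma~\ref{lem:3.2} (the covariance is the Hessian of the analytic pressure), which is arguably the cleaner justification since the asymptotic covariance is a limit of integrals rather than a single integral. Your explicit treatment of the quantile step---noting that weak convergence yields quantile convergence only at continuity points of the limiting distribution function, and arguing absolute continuity of the law of $\Xi(N,N)$ via real-analyticity off a null set---addresses a point the paper's proof passes over without comment.
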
 
 \begin{remark}\label{rem:5.8} 
 A consistent estimator for the  true limiting covariance matrix is given by  estimating the parameter $\theta\in\Theta$ or its moment estimator
\begin{eqnarray*}
&& \frac 1n \sum_{k,l=0}^{n-1} \left(f_i(X_k,\ldots ,X_{n-1},z)-\frac 1n\sum_{s=0}^{n-1} f_i(X_s,\ldots ,X_{n-1},z)\right)\\
&&\qquad \left(f_j(X_l,\ldots ,X_{n-1},z)-\frac 1n\sum_{s=0}^{n-1} f_j(X_s,\ldots ,X_{n-1},z)\right),
\end{eqnarray*}
where $z$ is any pre-assigned point in $\Omega$.
This follows from the ergodic theorem, the $\psi$-mixing property of Gibbs measures and the H\"older continuity of the functions $f_i$, $1\le i\le d$ by standard arguments.
 \end{remark}
 \begin{proof} Let $\theta\in H_0$ and $\Sigma\coloneqq \Cov_{\mu_{\theta}}(F)$ be the associated  covariance matrix.  
 By assumption, it is consistently estimated by some sequence of estimators  $ \hat \Sigma_{ij}^n$.
 
 Its type 1 error of the modified maximum-likelihood test  at $\theta\in H_0$   satisfies with $\X_{n}=(X_0,\ldots ,X_{n-1})$,
 \begin{eqnarray*}
 && \lim_{n\to\infty}\mu_{\theta}\left(\frac {L_n^{H_0}(\X_{n})}{L_n(\X_{n})}\le c_n(\X_{n}) \right) =\lim_{n\to\infty}\mu_\theta\left( \frac {\nu_\theta([\X_{n}])}{L_n(\X_{n})} \le c_n(\X_{n}) \right).
 \end{eqnarray*}
 By Theorem \ref{theo:5.5} there is $c_n$ such that
 \[ \mu_\theta\left( \frac {\nu_\theta([\X_{n}])}{L_n(\X_{n})} \le c_n \right)=\alpha\] 
 and 
 \[\lim_{n\to\infty} c_n(\X_{n})= \lim_{n\to\infty}\frac 1{z_n(\X_{n})+1}\] 
  with $z_n(\X_{n})$ being the lower $\alpha$-quantile of $\Xi(\overline N,\overline N)$ where $\overline N$ denotes the limiting normal distribution associated to the estimated parameter $\hat\theta^0\in H_0^d$.
  
  Let
  $ z_\alpha$ be defined as the lower $\alpha$-quantile of
 $\Xi(N,N)$ where $N$ is the limiting normal distribution under $\mu_\theta$.
 Since under $\mu_\theta$ we have that  $\hat \theta^0\to \theta$  and by the continuity of integrals $t\mapsto\int g \d\mu_t$ for H\"older functions $g$, we have that  the covariance matrix associated to $\hat \theta^0$ converges to the covariance matrix associated to $\theta$. Finally, since the degenerate centered normal distributions depend continuously  on their covariance matrices, we conclude that $\lim_{n\to\infty} z_n=z_\alpha$,
  proving  the theorem.
 \end{proof}
\begin{remark}\label{rem:5.9}  The same conclusion holds when $c_n(\X_{n}) $ is determined by the normal distribution associated to a consistent estimator for $\hat N$, that is when the associated covariance matrix is estimated consistently, see Remark \ref{rem:5.8}.
\end{remark}

\section{Applications}\label{sec:6}

The main theorem (Theorem \ref{theo:2.8}) may be used to derive many statistical decision procedures. Here we mention a few and work them out in detail. In addition we also connect the result to the classical setup of i.i.d. samples. 

To begin with one can construct asymptotic level $\alpha$ confidence intervals, that is random variables $U_n=U_n(\X_{n})$ and $V_n=V_n(\X_{n})$ such that for any $\theta\in \Theta$
\[  \lim_{n\to\infty}\mu_\theta(\{ L_n\le \theta\le U_n\})=\alpha.\] 
\begin{example}\label{ex:6.1}
For $n\in\mathbb N$ large enough let $\hat\theta^n$ denote the maximum likelihood estimator based on the observations $X_0,\ldots ,X_{n-1}$. Under $\mu_\theta$ they converge to the correct value $\theta$. Let $\hat N^n$ denote the centered normal distribution with estimated covariance structure $\hat \Sigma^n=\Sigma^{\mu_{\hat\theta^n}}$ derived from the observation $X_0,\ldots ,X_{n-1}$ and the maximum likelihood estimator $\hat \theta^n$ (note that this parameter is needed to center the functions $f_i$). For $1\le i\le d$ choose the upper and lower $\alpha/{2d}$ quantiles in the form $U_i^n\sqrt{n}$ and $V_i^n\sqrt{n}$ of the distribution of the $i$-th coordinate of the random vector $G^{-1}(\hat N^n)\hat N^{n}$, to be more precise, for $1\le i\le d$ let
 \[\mathrm{Prob}\left\{-\sqrt{n}U_i^n\le (G^{-1}(\hat N^n)\hat N^n)_i\le \sqrt{n}V_i^n\right\} =\alpha/d\qquad \forall 1\le i\le d.
 \] 
Then by Theorem \ref{theo:2.8}
\begin{eqnarray*}
&& \lim_{n\to\infty}\mu_{\theta}\left(\bigcap_{i=1}^d \left\{ \hat\theta_i^n-V_i^n\le \theta_i\le \hat\theta_i^n+U_i^n\right\}\right)\\
 &&=\lim_{n\to\infty}\mu_\theta \left(\bigcap_{i=1}^d\left\{ -\sqrt{n}U_i^n\le \sqrt{n}(\hat \theta_i^n-\theta_i)\le  \sqrt{n}V_i^n\right\}\right)\\
 && \le \sum_{i=1}^d \lim_{n\to\infty}\mu_\theta \left(\left\{ -\sqrt{n}U_i^n\le \sqrt{n}(\hat \theta_i^n-\theta_i)\le  \sqrt{n}V_i^n\right\}\right)\to\alpha.
 \end{eqnarray*}
 Here the confidence region is a rectangle in the $L_1$-norm where the size decreases like $1/\sqrt{n}$ as $n\to\infty$. Similarly of course, one can obtain confidence regions in the  euclidean  $L_2$-norm.
\end{example}
\begin{example}\label{ex:6.2}
 We explain Theorem \ref{theo:2.8} in case $d=1$, i.\,e.\@ $F_t=f_0+tf$. 
 The function $G$ becomes
 \[G(m)= m^2- \sigma^2\qquad m\in \mathbb R\] 
 where - abusing the notation $\sigma^2$ here - $\sigma^2=\lim_{n\to\infty} \frac 1n \int (S_n\widetilde f^t)^2 \d\mu_t$ denotes the asymptotic variance of the random sequence $\frac 1{\sqrt{n}} S_n(f-\int f \d\mu_t)$. Therefore, 
 \[ G^{-1}(m)= \frac 1{m^2-\sigma^2}\] 
 and
 \[ \sqrt{n}(\widehat \theta_n -\theta) = \frac {\sqrt{n}}{ (S_n\widetilde f^\theta)^2 - n \sigma^2} S_n\widetilde f^\theta+ o(1).\] 
 It follows that the limit distribution is given by a random variable
 \[ \frac {N}{N^2-\sigma^2},\] 
 where $N$ is a centered normal distribution with variance $\sigma^2$.
 
 The Fisher information is just $n\sigma^2$ so that
 \[ \int (\widehat \theta_n-\theta)^2 d\mu_\theta=  \int \frac 1{ ((S_n\widetilde f^\theta)^2 - n \sigma^2)^2} (S_n\widetilde f^\theta)^2 \d\mu_\theta +o(1).\]  
 Note that this example shows the relation between the Fisher information with respect to $\mu_\theta$ and the asymptotic variance of the maximum likelihood estimator when maximizing the density of $\nu_\theta$ instead of $\mu_\theta$.
 \end{example}
 
 \begin{example}\label{ex:6.3}
  We now turn to the case $d=2$, so $F_\theta= f_0+\theta_1 f_1+\theta_2 f_2$.  We assume the standing condition that $F=(f_1,f_2)$ are linearly independent as cohomology classes. By Proposition \ref{prop:4.4} we have 
 \[  \frac 1{\sqrt{n}}(S_n\tilde f_1^\theta, S_n\tilde f_2^\theta))\to N=(N_1,N_2)\] 
 where $N$ is a  normal random variable with expectation $0$.
 Let $\Sigma^n$ denote the covariance matrix  of $(S_nf_i)_{ i=1,2}$. Then the matrix 
 \[G(m)=\left(\begin{matrix} m_1^2 & m_1m_2\\ m_1m_2 & m_2^2\end{matrix}\right)-\Sigma^n\qquad m=(m_1,m_2)\in \mathbb R^2\] 
has the inverse $\left(\begin{matrix} a& b\\c& d\end{matrix}\right)$ which satisfies
 \begin{eqnarray*}
 && a(m_1^2-\Sigma_{11}^n)+c (m_2m_1 -\Sigma_{12}^n)=1;\quad  b(m_1^2-\Sigma_{11}^n)+d(m_2m_1-\Sigma_{12}^n)=0;  \\
 &&a(m_1m_2-\Sigma_{21}^n)+c(m_2^2-\Sigma_{22}^n)=0; \quad b(m_1m_2-\Sigma_{21}^n)+d(m_2^2-\Sigma_{22}^n)=1,
 \end{eqnarray*}
 hence, using $\Sigma_{12}^n=\Sigma_{21}^n$,
 \begin{eqnarray*}
     &&  a=\frac {m_2^2-\Sigma_{22}^n}{(m_1^2-\Sigma_{11}^n)(m_2^2-\Sigma_{22}^n)- (m_2m_1 -\Sigma_{12}^n)^2 }\\
    &&  b=\frac {m_1m_2-\Sigma_{12}^n}{(m_1m_2-\Sigma_{21}^n)^2- (m_1^2 -\Sigma_{11}^n)(m_2^2-\Sigma_{22}^n) }\\
 && c=-\frac {(m_2^2-\Sigma_{22}^n)(m_1m_2-\sigma_{21}^n)}{(m_1^2-\Sigma_{11}^n)(m_2^2-\Sigma_{22}^n)^2- (m_2m_1 -\Sigma_{12}^n)^2 }\\
 && d= \frac {m_1^2-\Sigma_{11}^n}{(m_1m_2-\Sigma_{21}^n)^2- (m_1^2 -\Sigma_{11}^n)(m_2^2-\Sigma_{22}^n) }
 \end{eqnarray*} 
 Note that  $m_i:=\frac 1{\sqrt{n}} S_n\tilde f_i^\theta$ is asymptotically normal and that
 $  m_im_j $ is a von Mises statistic with kernel $k(x,y)=f_i(x)f_j(y)$. It is not centered, but the kernel $k(x,y)-\int \int f_i(u)f_j(v) \mu_\theta(du)\mu_\theta(dv)$ satisfies the assumption of Theorem \ref{theo:3.10} 
  so that we conclude that
 \[ m_im_j-\Sigma_{ij}^n\] 
 converges to a weighted $\chi^2$-distribution. This remark shows that the distribution of $G^{-1}(N)N^t$ can be explicitly simulated, and thus all test and estimate results become practicable.
 \end{example} 

\begin{example}\label{ex:6.4} Consider the family of Bernoulli measure on the full shift space $\{1,\ldots ,a\}^\mathbb N$.  Then 
for each $n\ge 1$ the family can be represented as a parametrized family of measures on $\{1,\ldots ,d\}^n$ by
\begin{equation}\label{eq:6.1}
\mu_\pi(X_0=c_0,\ldots ,X_{n-1}=c_{n-1})= \prod_{i=0}^{n-1} \pi_{c_i}
\end{equation}
so $\Pi=\{\pi=(\pi_i)_{1\le i\le d}: \theta_i\ge 0,\ \sum_i\pi_i=1\}$is the parameter space.
The densities are  given by (\ref{eq:6.1}).
Below we estimate the probabilities $P(X_0=i)=\pi_i$ using these densities and compare the result with that in the present note.

Before doing this, we recall that $\mu_{\pi}$ are equilibrium measures for the family $\sum_{i=1}^d (\log \pi_i) \1 _{[i]}$, so, denoting $g_i= \1 _{[i]}$, the potential becomes $G_\pi=\sum_{i=1}^d (\log \pi_i) g_i$. This fact is well known, but we repeat the argument here, claiming that $\mu_\pi$ is the eigenmeasure of the transfer operator with potential $G_\pi$. Let $T$ denote the shift transformation on $\{1,\ldots ,d\}^{\mathbb N_0}$ and $h_\mu(T) $ the metric entropy of the invariant measure $\mu$. First observe that
\[ h_{\mu_\pi}(T)+\int G_\pi \d\mu_\pi= -\sum_{i=1}^d \pi_i\log \pi_i+\sum_{i=1}^d (\log \pi_i) \mu_\pi([i])= 0.\] 
For general invariant and ergodic measures $\mu$ we have that
\[ h_\mu(T)\le H_\mu(\{[i]: 1\le i\le d\})= -\sum_{i=1}^d \mu([i])\log \mu([i])\] 
with equality if and only if $\mu$ is a Bernoulli measure. Therefore - if $\mu$ is not a Bernoulli measure
\begin{eqnarray*}
&& h_\mu(T)+\int \log F_\pi \d\mu < -\sum_{i=1}^d \mu([i])\log \mu([i]) + \sum_{i=1}^d \mu([i])\log \pi_i \\
&& = \sum_{i=1}^d \mu([i]) \log \frac{\pi_i}{\mu([i])}\le \log \sum_{i=1}^d \mu([i]) \frac{\pi_i}{\mu([i])} =0
\end{eqnarray*}
If $ \mu$ is a Bernoulli measure and not $\mu_\pi$ the same estimation works, replacing the first $<$ sign by $\le$ and the last $\le$ sign  by $<$. Thus $\mu_\pi$ is the equilibrium measure. Since $1=\mu_\pi(T[i])=\int \sum_{j=1}^d \1 _{[j]} \e^{G_\pi}\d\mu_{\pi}=\sum_{j=1}^d \pi_j$,  $\mu_\pi$ has the potential $G_\pi$.

Note that the functions $(g_i:1\le i\le d)$ are not linearly independent, hence also not linearly independent as cohomology classes. 
In fact $\1 _{[d]}=1-\sum_{i=1}^{d-1} \1 _{[i]}$ and $\pi_d=1-\sum_{i=1}^{d-1} \pi_i$ so that the functions
\[ f_i= \1 _{[i]}\qquad 1\le i <d\]  
satisfy
\[F_\pi=\sum_{i=1}^{d-1} \log\frac{\pi_i}{1-\sum_{j=1}^{d-1}\pi_j} \1 _{[i]} = G_\pi  -\log \left(1-\sum_{j=1}^{d-1}\pi_j\right)\] 
and $F_\pi$ is therefore a potential cohomologuous  to $G_\pi$.
Since the functions $f_i$ are linearly independent, we arrive at the model in Section \ref{sec:2} with the new parametrization
\[ \Theta=\left\{\theta=\Big(\log ({\pi_i}/(1-\sum_{j=1}^{d-1}\pi_j) )\Big)_{1\le i < d}:\pi\in \Pi \right\}.
\]
We have shown that we can formulate the estimation problem in the framework of Section \ref{sec:2}.  It follows from Theorem \ref{theo:2.8} that the maximum likelihood estimators $\widehat\theta^n$ exists for the parameter  $\theta$ 
and --under $\mu_\theta=\mu_\pi$-- has the asymptotic distribution $G^{-1}(N)N^t$ where $N$ is a centered normal distribution with  covariance matrix   $\Sigma=\Sigma^{\mu_\theta}$.

Note that we can obtain an estimator for $\pi$ by solving the system of equations
\begin{equation}\label{eq:6.2}
  \theta_i =\log \frac{\pi_i}{1-\sum_{j=1}^{d-1}\pi_j}\quad 1\le i <d,
  \end{equation}
and   $ \pi_d= 1-\sum_{j=1}^{d-1} \pi_j$.

The maximum likelihood estimator for $\pi$ is well known when using the densities (\ref{eq:6.1}).

In order to compare the two methods, let $d=2$, that is we estimate whether a coin is fair or not. In this case (\ref{eq:6.2}) entails (omitting the sample size $n$ in the notation)
\[ 
\widehat\pi_1=\frac{\e^{\widehat\theta_1}}{1+\e^{\widehat\theta_1}}\] 
and, by  Example \ref{ex:6.2}, for the true parameter $\theta$
\[\sqrt{n}(\widehat\theta_1-\theta_1)\to_{n\to\infty} \frac N{N^2-\sigma^2}\] 
where $N$ is a centered normal distribution with variance 
\begin{eqnarray}\label{eq:6.3}
\sigma^2&=&\lim_{m\to\infty} \frac 1m \int (S_m\tilde f_1^\theta)^2 \d\mu_\theta= \pi_1(1-\pi_1).
\end{eqnarray}
On the other hand, differentiating equation (\ref{eq:6.1}) with respect to  $\pi_1$ yields
\begin{eqnarray*}
&& \frac \partial{\partial \pi_1}\mu_\pi([c_0,\ldots ,c_{n-1}])= \frac{d}{d\pi_1}\pi_1^{S_ng_1}(1-\pi_1)^{n-S_ng_1}\\
 &=& \left(\frac{S_ng_1}{\pi_1}-\frac{n-S_ng_1}{1-\pi_1}\right)\exp\left(S_ng_1\log \pi_1 + (1-S_ng_1)\log (1-\pi_1)\right)
 \end{eqnarray*}
which vanishes for 
\[ \pi_1= \frac 1n S_ng_1.\] 
Hence the maximum likelihood estimator $\widehat\pi$ for $\pi$,
\[ \sqrt{n}(\widehat\pi_1-\pi_1),\] 
is asymptotically normal with expectation $0$ and variance
$\tau^2= \pi_1(1-\pi_1)$. 
Since by (\ref{eq:6.2}) $\theta_1=  \log\frac{\pi_1}{1-\pi_1}$ we obtain for the estimator $\overline\theta_1=  \log\frac{\widehat\pi_1}{1-\widehat\pi_1}$
\begin{eqnarray*}
&& \sqrt{n}(\overline\theta_1-\theta_1)= \sqrt{n}\left( \log\frac{\widehat\pi_1}{1-\widehat\pi_1}-\log\frac{\pi_1}{1-\pi_1}\right)
\end{eqnarray*}
Using Taylor expansion around ${\theta_1}/{(1-\theta_1)}$ up to the first order one obtains
\begin{eqnarray*}
&& \sqrt{n}(\overline\theta_1-\theta_1)=\\
 && =\sqrt{n}\left(\frac {1-\pi_1}{ \pi_1}\right)\left(\frac{\widehat\pi_1-\pi_1}{(1-\widehat\pi_1)(1-\pi_1)}\right) 
 +o(1)\\
 &&= \sqrt{n} \frac{\widehat\pi_1-\pi_1}{(1-\widehat\pi_1)\pi_1}+o(1).
\end{eqnarray*}
We conclude that the estimator for $\theta$ derived from the maximum likelihood estimator for $\pi$ differs only by the denominator: In the first case it is random and asymptotic to $\tau^2$ in the latter case it is the weighted sum of two $\chi^2$-distributions minus $\tau^2$ (cf. Theorem \ref{theo:3.10}).
In the setup of this note the family of densities of the invariant Gibbs measure $\mu_\theta$ is
\[  \exp\{P(F_\theta)-\theta S_nf_1\}\] 
\end{example}

\begin{example}\label{ex:6.5}  Consider the family of invariant Markov chains on the subshift of finite type $\Omega$. Let $A=(a_{ij})_{1\le i,j\le a}$ denote its defining $\{0,1\}$-valued transition matrix. Then the family can be described  by $
\{ p=(p_{ij})_{(i,j)\in I}\}$ where $I=\{(i,j)|\  a_{ij}=1\}$. The initial state for the chain is then given by the unique normed eigenvector $\pi=(p_i)_{1\le i\le a}$ for the eigenvalue $1$ of the matrix $P=(\tilde p_{ij})$ where $\tilde p_{ij}=p_{ij}$ for $(i,j)\in I$ and $\tilde p_{ij}=0$ otherwise.

This Markov measure $\mu_p$ is then given by
 \begin{eqnarray*}
 \mu_p([c_0,\ldots ,c_{n-1}])&=&p_{c_0}p_{c_0c_1}\ldots p_{c_{n-2}c_{n-1}}\\
 &=& p_{c_0} \exp\left(\sum_{l=0}^{n-2} \sum_{(i,j)\in I} \log(p_{ij}) \1 _{[i,j]}(\sigma^l(x))\right),
 \end{eqnarray*}
 where $x\in [c_0,\ldots ,c_{n-1}]\subset \Omega$   and where $\sigma$ denotes the left shift transformation as before.
 
 The functions $\1 _{[i,j]}$ ($(i,j)\in I$) are certainly linearly  dependent
 since their sum equals $1$. Deleting one of these functions, say $\1 _{[a,a]}$ w.l.o.g., from this family the new collection  is 
 linearly independent and any nontrivial linear combination is not cohomologous to $0$:
 If $0\ne s\in \mathbb R^{|I|-1}$, $f_1,\ldots ,f_{|I|-1} $ an enumeration of all function in the new collection and
 \[ \sum_i s_i f_i   = c + g-g\circ \sigma\] 
 then also for each $n\ge 1$
 \[ \sum_i s_i \sum_{k=0}^{n-1}f_i\circ \sigma^k= nc+ g-g\circ \sigma^n.\] 
 Dividing by $n$ it follows that
 \[ \lim_{n\to\infty}  \sum_i \frac{s_i}{n} \sum_{k=0}^{n-1}f_i\circ \sigma^k= c\]
 at every point in $\Omega$. On the other hand, by the ergodic theorem,
  \[ \lim_{n\to\infty}  \sum_i \frac{s_i}{n} \sum_{k=0}^{n-1}f_i\circ \sigma^k= \sum_i s_i \int f_i \d\mu \qquad \text{\rm $\mu$ a.s.}\]
  for every ergodic shift-invariant probability $\mu$. Since it is easy to see  that the right hand side takes different values for some invariant measures $\mu$ and $\mu'$ we get a contradiction.
  
The constraint is $\sum_{(i,j)\in I} p_{ij}=a$   which gives now
the parametrization 
\begin{eqnarray*}
&& \mu_t([c_0,\ldots ,c_{n-1}])=\\
&&p_{c_0} \exp\left(\sum_{l=0}^{n-3}\left(t_{aa}\1 (\sigma^l(x))+ \sum_{(a,a)\ne (i,j)} t_{ij} \1 _{[i,j]}(\sigma^l(x))\right)\right)
\end{eqnarray*}
where
\begin{equation}\label{eq:6.4}
 t_{ij}= \log p_{ij} -\log (a-\sum_{(l,k)\ne(a,a)} p_{lk})\quad (i,j)\ne (a,a),
 \end{equation}
\begin{equation}\label{eq:6.5}
  t_{aa}= \log (a-\sum_{(l,k)\ne(a,a)} p_{lk})
  \end{equation}
and where  we used that $\1 _{[a,a]}=\1 -\sum_{(a,a)\ne (i,j)\in I} \1 _{[i,j]}$.

The entropy $h_\mu(T)$ of a Markov measure with initial probabilities $p_i$ and transition probabilities $p_{ij}$ is well known,
\[h_\mu(T)= \sum_{(i,j)\in I} p_ip_{ij}\log p_{ij}.\] 
Moreover,
\[ \int \1 _{[i,j]} \d\mu= p_i p_{ij}\quad \text{\rm and}\quad \int \1  \d\mu=1.\] 

Using the functions $f_i$ ($1\le i\le d:=|I|-1$) as before and letting $t=(t_i)_{1\le 
i\le d}$ denote the corresponding $t_{ij}$ as above, then  for
$ F_t= \sum_{i=1}^{d} t_i f_i + t_{aa}$
we obtain that 
\begin{eqnarray*}
\int F_t \d\mu&=&  \sum_{(a,a)\ne (i,j)\in I} t_{ij} p_ip_{ij} + t_{aa}\\
&=& \sum_{(a,a)\ne (i,j)\in I} (\log p_{ij} -\log (a-\sum_{(a,a)\ne (l,k)} p_{lk})) p_i p_{ij} + t_{aa}\\
&=&  \sum_{(a,a)\ne (i,j) \in I} p_ip_{ij}\log p_{ij} -\log p_{aa}\sum_{(a,a)\ne (i,j)\in I}  p_ip_{ij} + \log p_{aa}\\
&=& \sum_{(i,j)\in I} p_ip_{ij}\log p_{ij}
\end{eqnarray*}
This means that the Gibbs measure for the potential $F_t$ is the Markov measure $\mu_p$ associated to $\pi$ and $P$,   which is  shown similarly to the arguments in Example \ref{ex:6.4}.

As in the case of Bernoulli measures the contribution of $t_{aa}$ can be estimated together with the other parameters $t_{ij}$
so that  $F_t= \1+\sum_{i=1}^d \theta_i f_i$ is a representation of the potential functions where $\theta=(\theta_i)_{1\le i\le d}$ are the newly defined parameters.

By Theorem \ref{theo:2.4} we can estimate the parameters $\theta_{i}$  ($1\le i\le d$) using the maximum likelihood estimator $\widehat\theta^n$ so that $\widehat \theta^n\to \theta$ a.\,s.\@ and the distribution  of $\sqrt{n}(\widehat \theta^n- \theta)$  under $\mu_\theta$ converges weakly to the distribution of  $G^{-1}(N) N^t$, where $N$ is a normal variable as explained in Theorem \ref{theo:2.8}. The parameters $t_{ij}$ are then estimated solving the equations defining the parameters $\theta$.
Details are left to the reader.

Solving the equations (\ref{eq:6.4}) and (\ref{eq:6.5}) for $p_{ij}$ yields
\[ p_{aa} = \exp t_{aa}\] 
and 
\[  p_{ij}= p_{aa} \exp t_{ij}=\exp (t_{ij}+t_{aa}).\] 
Hence we arrive at estimators 
\[ \widehat p_{aa}^n= \exp \widehat t_{ij}^n\quad 
\widehat p_{ij}^n=\exp(\widehat t_{ij}^n+\widehat  t_{aa}^n).
\] 
Hence the estimators for $p_{ij}$ are a  continuous function of $\widehat \theta^n$ and the asymptotic distribution carries over. Details are left to the reader (compare the discussion in Example \ref{ex:6.4})
\end{example}

\section{Modifications of Theorem \ref{theo:2.8}}\label{sec:7}

In a final section we discuss further applications of the main results. The first extension concerns the case when the potential is linearly approximated, the second one deals with maximum potential estimators when the $f_i$ are linearly independent as cohomology classes (cf. Section \ref{sec:2.2}).

\subsection{Potentials $(F_\theta)_{\theta\in \Theta}$ of class $C^3$}\label{sec:7.1}

So let $\{F_\theta: \theta\in \Theta\subset \mathbb R^d\}$ denote a parametrized family of potential functions $F_\theta:\Omega\to \mathbb R$. Equation (\ref{eq:3.2}) gives the densities to be maximized. In the general case it seems to be a major problem to show existence and uniqueness of maximum likelihood estimators, since its derivative equating to zero cannot be solved analytically nor the existence is guaranteed at all. Note that solving the basic equation analytically also was not possible in the special case of Theorem \ref{theo:2.7}. However, the existence  problem was solved using the uniqueness of Gibbs measures in topologically mixing subshifts of finite type.

For this reason we need to make the assumption that there exists a sequence of maximum likelihood estimators $\widehat \theta^n$ which under $\mu_\theta$ converge almost surely to the true parameter $\theta$. Here maximum likelihood estimator is understood to imply that
\[ D_i[\nu_s([X_0,\ldots ,X_{n-1}])](\widehat\theta^n)=0\] 
for every $1\le i\le d$, where $\nu_s$ denotes the normalized eigenmeasure of the transfer operator with potential $F_s$.
Then we have the following result which follows essentially from the proof of Theorem \ref{theo:2.8}.
\begin{theorem}\label{theo:7.1} Let $\{\mu_t: t\in \Theta\}$ be a family of equilibrium states on a subshift of finite type with potential functions $F_t$ ($t\in \Theta$). Assume that
\begin{enumerate}
\item There is a sequence of maximum likelihood estimators $\widehat\theta^n$ ($n\ge 1$) for $\theta\in \Theta$ such that 
\[ \lim_{n\to \infty} \widehat\theta^n=\theta \qquad \mu_\theta\ \mbox{ a.s.}\] 
\item The map $t\mapsto F_t$ is $C^3$.
\item The first oder partial derivatives 
\[ D_i[F_t](\theta)\qquad 1\le i\le d\] 
are linearly independent   as cohomology classes.
\end{enumerate}
Then --with respect to the distribution $\mu_\theta$--
\[ \sqrt{n} (\widehat\theta^n-\theta)\to G^{-1}(N)N^t\] 
where $N$ is a centered normal distribution on $\mathbb R^d$ with invertible covariance matrix $  \Sigma^{\mu_\theta}=\Cov_{\mu_{\theta}}(D_1[F_t](\theta),\ldots ,D_d[F_t](\theta))$ 
 and $G(M)=MM^{t}-\Sigma^{\mu_\theta}$, $M\in \mathbb R^d$.
\end{theorem}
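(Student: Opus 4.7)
The plan is to run the proof of Proposition 4.4 essentially verbatim, with the H\"older observables $f_i$ replaced by the partial derivatives $D_i[F_s](\theta)$ and with the analytic Taylor expansions of Section 3.2 replaced by second-order Taylor expansions justified by the $C^3$-hypothesis. Consistency of $\widehat\theta^n$ is now part of the hypotheses, so all that needs to be reconstructed is the asymptotic distribution of $\sqrt n(\widehat\theta^n-\theta)$.

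First I would upgrade Section 3.2 to the $C^3$-setting. Standard perturbation theory for the quasi-compact transfer operator $\mathcal L_t$ (simple isolated leading eigenvalue) transfers $C^3$-regularity from $t\mapsto F_t$ to $t\mapsto P(F_t),\varphi_t,\nu_t$. Differentiating the eigenequation $\mathcal L_t^n\varphi_t=\e^{nP(F_t)}\varphi_t$ and using the chain-rule identity $D_i(\e^{S_nF_s})(\theta)=S_n(D_iF_\theta)\,\e^{S_nF_\theta}$ produces the analogues of Lemmas 3.2 and 3.3; for a cylinder $C=[x_0,\ldots,x_{n-1}]$,
\[
D_i\nu_s(C)(\theta)=\int_C S_n\widetilde{D_iF_\theta}^\theta\,\d\nu_\theta+O(\nu_\theta(C)),
\]
\[
D_{ij}\nu_s(C)(\theta)=\int_C S_n\widetilde{D_iF_\theta}^\theta\,S_n\widetilde{D_jF_\theta}^\theta\,\d\nu_\theta-n\Sigma^{\mu_\theta}_{ij}\,\nu_\theta(C)+o(n\nu_\theta(C)),
\]
with $\Sigma^{\mu_\theta}_{ij}=\Cov_{\mu_\theta}(D_iF_\theta,D_jF_\theta)$. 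The first-order optimality condition $D_i[\nu_s([\X_n])](\widehat\theta^n)=0$ combined with a second-order Taylor expansion of the vector $(D_i[\nu_s([\X_n])])_i$ around $\theta$ then yields
\[
(\widehat\theta^n-\theta)\bigl(D_{ij}[\nu_s([\X_n])](\theta)\bigr)_{ij}=-\bigl(D_i[\nu_s([\X_n])](\theta)\bigr)_i+R_n,
\]
where $R_n$ is the third-order Taylor remainder. Dividing by $\sqrt n\,\nu_\theta([\X_n])$ and substituting the expansions above, the right-hand side converges in distribution under $\mu_\theta$ to $-N$ with $N\sim\mathcal N(0,\Sigma^{\mu_\theta})$ by the multivariate CLT for H\"older observables, while the normalized Hessian converges in distribution to $G(N)=N^tN-\Sigma^{\mu_\theta}$. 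By hypothesis 3 and Theorem 2.6, $\Sigma^{\mu_\theta}$ is invertible, so Lemma 4.3 yields a.s.\ invertibility of $G(N)$, and the continuous mapping theorem together with \cite[Theorem 2.7]{Bi} gives $\sqrt n(\widehat\theta^n-\theta)\to G^{-1}(N)N^t$ in distribution, as claimed.

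The main obstacle, and the only place where the $C^3$-downgrade is truly delicate, is controlling the third-order Taylor remainder $R_n$; in the analytic setting the Cauchy differentiation formula killed this for free inside the proof of Lemma 3.3. Concretely, one has to show that for every multi-index $\lambda$ with $|\lambda|=3$, $D_\lambda\nu_s(C)$ is bounded by $O(n^3\nu_\theta(C))$ uniformly for $s$ in a small neighborhood of $\theta$. This is obtained by differentiating the integral representation $\nu_s(C)=\int\e^{-nP(F_s)+S_nF_s(cx)}\,\d\nu_s(x)$ three times via the product and chain rules, and estimating each resulting factor by the Gibbs property (3.1) together with the uniform $C^3$-bounds on $F_s$ and on $P(F_s)$ granted by hypothesis 2 and the $C^3$ perturbation theory. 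Combined with the heuristic rate $\|\widehat\theta^n-\theta\|=O(n^{-1/2})$ forced by the leading-order argument, this bound makes $R_n/(\sqrt n\,\nu_\theta([\X_n]))$ negligible in the limit, and the proof closes exactly as in Proposition 4.4.
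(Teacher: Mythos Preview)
Your overall architecture matches the paper's: replace each $f_i$ by the H\"older observable $g_i\coloneqq D_i[F_s](\theta)$, recover the derivative formulas of Lemmas~3.2 and~3.3 with these new observables, and then run Proposition~4.4 verbatim. One difference of detail: instead of redoing the $C^3$ perturbation theory for the transfer operator, the paper isolates a short linearization argument (Lemma~7.2). It Taylor-expands $t\mapsto F_t$ to first order around $\theta$, uses the Lipschitz property of the pressure $f\mapsto P(f)$ to show that $D_i[P(F_t)](\theta)$ coincides with the $i$-th partial derivative of the pressure of the \emph{linear} family $t\mapsto F_\theta+\sum_j(t_j-\theta_j)g_j$, and then cites Lemma~3.2 directly. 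This reduces everything to the already-proved linear case and avoids reopening the spectral perturbation argument; your route is valid too but does more work.

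There is, however, a genuine arithmetic gap in your remainder control. You propose bounding the third derivatives by $D_\lambda\nu_s(C)=O(n^3\nu_\theta(C))$ for $|\lambda|=3$ and combining this with the heuristic $\|\widehat\theta^n-\theta\|=O(n^{-1/2})$. But then
\[
\frac{R_n}{\sqrt n\,\nu_\theta(C)}\;=\;O\!\left(\frac{n^3\cdot n^{-1}}{\sqrt n}\right)\;=\;O(n^{3/2}),
\]
which diverges rather than vanishes. The crude $O(n^3)$ bound is too weak by a full power of $n$; even the sharper stochastic bound $D_\lambda\nu_s(C)/\nu_\theta(C)=O_P(n^{3/2})$ coming from a product of three centered ergodic sums only yields $O_P(1)$ for the normalized remainder, not $o_P(1)$. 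Moreover, invoking $\|\widehat\theta^n-\theta\|=O(n^{-1/2})$ at this stage is circular, since that rate is precisely the conclusion you are after; a~priori you only have $\widehat\theta^n\to\theta$ a.s.\ from hypothesis~(1). To be fair, the paper is also terse at exactly this point---it writes the remainder as $O(\|\widehat\theta_n-\theta\|^2)$ without tracking the $n$-dependence of the implied constant and then says ``the proof is completed as the proof of Theorem~2.8''---so the gap you have identified as ``the main obstacle'' is real, but your proposed resolution does not close it.
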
 

\begin{proof}
The maximum likelihood   estimators $\widehat\theta^n$ for $\theta$ are 
  implicitly defined by
\[
  \nu_{\widehat \theta^n}([X_0,\ldots ,X_{n-1}])=\max_{t\in \Theta} \nu_t([X_0,\ldots ,X_{n-1}]).\] 
Using equations (\ref{eq:3.2}) and (\ref{eq:3.3}), writing $C=[X_0,\ldots ,X_{n-1}]$ and taking the first derivative this is equivalent to 
\begin{equation}
\int_C (D_i[F_s](\widehat \theta^n)-nD_i[P(F_t])](\widehat\theta^n)) d\nu_{\widehat \theta^n} + O(\nu_{\widehat\theta^n}(C))=0\qquad 1\le i\le d.\label{eq:7.1}
\end{equation}
\end{proof}
\begin{lemma}\label{lem:7.2} We have
\[D_i[P(F_t)](\widehat\theta^n)= \int D_i[F_t])(\widehat\theta^n) \d\mu_{\widehat\theta^n}.\]
\end{lemma}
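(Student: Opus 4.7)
The plan is to adapt the proof of Lemma~\ref{lem:3.2}.1 to the $C^3$-parametrized setting. The formula to be established reduces, in the linear case $F_t=f_0+\sum t_if_i$, to $D_i[P(F_s)](t)=\int f_i\d\mu_t$, which is exactly what Lemma~\ref{lem:3.2}.1 says, since $D_i[F_s](t)=f_i$ there. The engine of the proof is the eigen-equation
\[
\mathcal L_t\varphi_t=\Lambda_t\varphi_t,\qquad \Lambda_t=\e^{P(F_t)},
\]
together with perturbation theory (Proposition~\ref{prop:3.1}), which under the $C^3$-assumption on $t\mapsto F_t$ gives $C^3$-dependence of $\Lambda_t$, $\varphi_t$, and (weak-$\star$) $\nu_t$ on $t\in\Theta$. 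Establishing this $C^3$-regularity — strictly speaking Proposition~\ref{prop:3.1} is stated in the analytic setting — is the only non-routine ingredient; it follows from Kato-type analytic perturbation theory applied to the $C^3$-family $\mathcal L_t$, since the leading eigenvalue of $\mathcal L_t$ is simple and isolated and $t\mapsto \mathcal L_t$ is $C^3$ as a map into bounded operators on the H\"older space.

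Granted this, I would differentiate $\mathcal L_s\varphi_s=\Lambda_s\varphi_s$ in direction $s_i$, evaluate at $s=\widehat\theta^n$, and use the product rule together with the fact that
\[
D_i[\mathcal L_s h](\widehat\theta^n)(x)=\sum_{\sigma y=x}h(y)\,D_i[F_s](\widehat\theta^n)(y)\,\e^{F_{\widehat\theta^n}(y)}=\mathcal L_{\widehat\theta^n}\bigl(D_i[F_s](\widehat\theta^n)\cdot h\bigr)(x).
\]
Writing $g\coloneqq D_i[F_s](\widehat\theta^n)$ and $\dot\varphi\coloneqq D_i[\varphi_s](\widehat\theta^n)$, this yields
\[
\mathcal L_{\widehat\theta^n}\dot\varphi+\mathcal L_{\widehat\theta^n}(\varphi_{\widehat\theta^n}\,g)=\Lambda_{\widehat\theta^n}D_i[P(F_s)](\widehat\theta^n)\varphi_{\widehat\theta^n}+\Lambda_{\widehat\theta^n}\dot\varphi.
\]

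The final step is to divide by $\Lambda_{\widehat\theta^n}$ and integrate against the eigenmeasure $\nu_{\widehat\theta^n}$. Since $\mathcal L_{\widehat\theta^n}^*\nu_{\widehat\theta^n}=\Lambda_{\widehat\theta^n}\nu_{\widehat\theta^n}$, the $\mathcal L$-terms satisfy $\int \Lambda_{\widehat\theta^n}^{-1}\mathcal L_{\widehat\theta^n}h\,\d\nu_{\widehat\theta^n}=\int h\,\d\nu_{\widehat\theta^n}$, so the two $\dot\varphi$ contributions cancel and one is left with
\[
\int \varphi_{\widehat\theta^n}\,g\,\d\nu_{\widehat\theta^n}=D_i[P(F_s)](\widehat\theta^n)\int\varphi_{\widehat\theta^n}\,\d\nu_{\widehat\theta^n}=D_i[P(F_s)](\widehat\theta^n),
\]
using the normalization $\int\varphi_{\widehat\theta^n}\,\d\nu_{\widehat\theta^n}=1$. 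Since $\mu_{\widehat\theta^n}=\varphi_{\widehat\theta^n}\nu_{\widehat\theta^n}$, the left side equals $\int D_i[F_t](\widehat\theta^n)\,\d\mu_{\widehat\theta^n}$, giving the claim. Nothing in the argument uses that $\widehat\theta^n$ is a maximum likelihood estimator, so the identity is valid at every point of $\Theta$; evaluation at $\widehat\theta^n$ is the form needed for \eqref{eq:7.1}.
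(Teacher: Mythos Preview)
Your proof is correct and is essentially a direct adaptation of the proof of Lemma~\ref{lem:3.2}.1 to the $C^3$ setting: differentiate the eigen-equation, integrate against the eigenmeasure, and read off the formula. The only extra input you need is $C^3$-smoothness of $t\mapsto(\Lambda_t,\varphi_t,\nu_t)$, which, as you say, follows from standard perturbation theory for a simple isolated eigenvalue once $t\mapsto\mathcal L_t$ is $C^3$ in operator norm.

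The paper takes a different route: it avoids reproving the perturbation-theoretic differentiation in the $C^3$ category altogether. Instead it Taylor-expands $F_t$ around $\theta$ to first order, uses that $f\mapsto P(f)$ is Lipschitz to conclude that $P(F_t)$ and the pressure of the \emph{linear} family $t\mapsto F_\theta+\sum_j D_j[F_s](\theta)(t_j-\theta_j)$ differ by $O(\|t-\theta\|^2)$, hence have the same first partials at $\theta$, and then simply invokes Lemma~\ref{lem:3.2} for that linear family (where analyticity is already available). Your approach is conceptually more direct and yields the identity at every $t$ in one stroke; the paper's approach is softer in that it needs only Lipschitz continuity of pressure and the already-established analytic case, so it never has to justify $C^3$-smoothness of the eigendata.
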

\begin{proof}  The map $t\mapsto F_t$ has an expansion
\[ F_t = F_{\theta}+\sum_{i=1}^d D_i[F_s](\theta)(t_i-\theta_i) + R(\tilde\theta)\|t-\theta\|^2\] 
for some $\theta$ with $\|\tilde\theta-\theta\| \le \| t-\theta\|$.
Observe that the map $f\mapsto P(f)$ is Lipschitz which implies that
\begin{eqnarray*}
&&  \frac {P(F_t)-P(F_\theta+\sum_{i=1}^d D_i[F_s](\theta)(t_i-\theta_i))}{\|t-\theta\|}\\
&&  \le K \frac{\|F_t- F_\theta-\sum_{i=1}^d D_i[P(F_s)](\theta)(t_i-\theta_i)\|}{\|t-\theta\|}=O(\|t-\theta\|),
\end{eqnarray*}
hence 
\[D_i[P(F_t)](\theta) = D_i[P(F_\theta)+\sum_{i=1}^d D_i[F_s](\theta)(t_i-\theta_i))].
\] 
 The lemma follows now from Lemma \ref{lem:3.2},
 \[ D_i[F_\theta+\sum_{j=1}^d D_j[F_s](\theta)(t_j-\theta_j)]= 
 \int D_i[F_s](\theta) \d\mu_\theta\] 
 setting $\theta=\widehat\theta^n$.
As in   Lemma \ref{lem:3.3}, one obtains
\[ D_i[\nu_t(C)](\theta)= \int_C -n\int D_i[F_t](\theta) \d\mu_\theta+S_nD_i[F_s](\theta) \d\nu_{\theta}\] 
hence also
\begin{eqnarray*}
&& D_i[\nu_t(C)](\widehat\theta^n)= D_i[\nu_t(C)](\theta)\\
&&\qquad  +\sum_{k=1}^d D_{k}[D_i[\nu_t(C)](\theta)](\widehat\theta_k^n-\theta_k)+o(n\nu_\theta(C)).
\end{eqnarray*}
The proof is completed as the proof of Theorem \ref{theo:2.8}, where the covariance matrix is given  by $\Sigma^{\mu_\theta}$.
\end{proof}

\subsection{Asymptotic distribution for the maximum potential estimator}\label{sec:7.2}

In this section we return to the MPE estimator defined in Section \ref{sec:2.2}. Recall that it is defined by 
\[ \widetilde\theta_n\equiv \mbox{\rm arg}\max\{ \{\langle\alpha_n,(1,t)\rangle-P(F_t)\}.\] 
We always assume that the functions $f_1,\ldots,f_d$ are linearly independent as cohomology classes.

\begin{theorem}\label{theo:7.3} Let $(X_n)_{n\ge 0}$ be a sequence of random variables in $L_2(\mu_{\theta_0})$, and assume that $\widetilde \theta_n$ converges a.\,s.\@ to $\theta_0$.
Then
 \[\sqrt{n}(\widetilde \theta_n-\theta_0)\] 
   converges in distribution  to a random variable $G^{-1}(N)N^t$, where where $N\sim \mathcal{N}(0,\Sigma)$ and $G^{-1}(N)$ denotes the inverse of the matrix $G(N)$ in Theorem \ref{theo:2.8}.
   Moreover, $\widetilde \theta_n$ and the MLE $\widehat\theta_n$ ($n\ge 1$) are asymptotically equivalent in the sense that their a.\,s.\@ limit and their asymptotic distributions agree.
\end{theorem}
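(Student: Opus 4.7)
My plan is to derive both parts of Theorem \ref{theo:7.3} by a direct comparison of the two optimization problems. First, the MPE satisfies the first-order condition
\[
\alpha_n = \nabla P(F_{\widetilde\theta_n}) = \left(\int f_i \d\mu_{\widetilde\theta_n}\right)_{1\le i\le d}
\]
by Lemma \ref{lem:3.2}(1), and the objective $t\mapsto \langle \alpha_n,(1,t)\rangle - P(F_t)$ is strictly concave because its Hessian $-\Sigma^{\mu_t}$ is negative definite under the cohomology hypothesis (Theorem \ref{theo:2.6} and Lemma \ref{lem:3.5}). Existence and uniqueness of $\widetilde\theta_n$ follow. The ergodic theorem gives $\alpha_n\to\int\F\d\mu_{\theta_0}$ a.s., and the local inverse function theorem applied to $t\mapsto\int\F\d\mu_t$ (whose Jacobian at $\theta_0$ is $\Sigma^{\mu_{\theta_0}}$) then yields $\widetilde\theta_n\to\theta_0$ a.s., reproving the consistency already noted in Section \ref{sec:2.2}.

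The main step is a comparison of the MLE and MPE objective functions. Starting from \eqref{eq:3.2} and using the H\"older bound
\[
|S_nF_t(\X_n x)-S_nF_t(\X_n x')|=O(1) \qquad \text{uniformly in } x,x'\in\Omega
\]
(the differences at time $k$ decay like $\rho^{n-k}$ and sum to $O(1)$), the integral in \eqref{eq:3.2} equals $\e^{S_nF_t(\X_n z)}$ up to a multiplicative factor bounded between $D(t)^{-1}$ and $D(t)$ by \eqref{eq:3.1}, so one obtains
\[
\log\nu_t([\X_n]) = n\bigl(\langle \alpha_n,(1,t)\rangle - P(F_t)\bigr)+r_n(t,\X_n),
\]
with $r_n$ uniformly bounded in $n$ on compact subsets of $\Theta$. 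A further argument, based on differentiating under the integral in \eqref{eq:3.2} and invoking the weak-$\star$-analyticity of $t\mapsto\nu_t$ from Proposition \ref{prop:3.1}, yields the uniform bound $\nabla_t r_n=O(1)$ on the same compact subsets. Consequently, the MLE equation $\nabla_t\log\nu_t([\X_n])\bigr|_{\widehat\theta_n}=0$ is equivalent to the MPE first-order equation modulo a remainder of order $1/n$, and strict concavity of the MPE objective with Hessian $-\Sigma^{\mu_{\widetilde\theta_n}}$ then forces $|\widehat\theta_n-\widetilde\theta_n|=O(1/n)$ by the mean value theorem.

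In particular $\sqrt n(\widehat\theta_n-\widetilde\theta_n)\to 0$, so $\sqrt n(\widetilde\theta_n-\theta_0)$ has the same asymptotic distribution as $\sqrt n(\widehat\theta_n-\theta_0)$, namely that of $G^{-1}(N)N^t$ with $N\sim\mathcal{N}(0,\Sigma^{\mu_{\theta_0}})$ by Theorem \ref{theo:2.8}. This simultaneously gives the claimed distributional limit and the asymptotic equivalence of the two estimators. The main obstacle I anticipate is the uniform derivative bound $\nabla_t r_n=O(1)$: a naive Cauchy estimate applied to the analytic extension $z\mapsto\nu_z([\X_n])$ gives bounds proportional to $\nu_\theta([\X_n])$, which is exponentially small, so one must instead factor out the explicit Gibbs exponential and differentiate the bounded ratio $\nu_t([\X_n])/\e^{-nP(F_t)+S_nF_t(\X_n z)}$, using the sharpened perturbation estimates that come with Proposition \ref{prop:3.1}, in order to isolate the genuine $O(1)$ contribution whose gradient drives the comparison.
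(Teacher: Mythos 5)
Your argument is correct in substance but follows a genuinely different route from the paper's. The paper derives the limit law for $\widetilde\theta_n$ directly from the pressure function: it Taylor-expands $\nabla P(F_s)$ at $\theta_0$, inserts the MPE stationarity condition $\alpha_n=\nabla P(F_{\widetilde\theta_n})$ together with $D_i[P(F_s)](\theta_0)=\int f_i\,\d\mu_{\theta_0}$ (Lemma \ref{lem:3.2}), identifies $\sqrt n\,(\alpha_n-\int \F\,\d\mu_{\theta_0})$ as asymptotically normal with covariance $\Sigma$, and then inverts; the asymptotic equivalence with the MLE is obtained only a posteriori by comparing the resulting limit with Theorem \ref{theo:2.8}. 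You instead compare the two objective functions directly: the Gibbs property \eqref{eq:3.1}--\eqref{eq:3.2} gives $\log\nu_t([\X_n])=n(\langle\alpha_n,(1,t)\rangle-P(F_t))+r_n(t)$ with $r_n=O(1)$, the gradient bound $\nabla_t r_n=O(1)$ that you flag as the main obstacle is exactly what Lemma \ref{lem:3.4} supplies (the cylinder average of $S_nf_i$ differs from $S_{[x]}f_i$ by $O(1)$ by H\"older continuity, so no Cauchy estimate on $\nu_z([\X_n])$ itself is needed), and strict concavity of the MPE objective (Hessian $-n\Sigma^{\mu_t}$, definite by Theorem \ref{theo:2.6}) turns the $O(1)$ discrepancy of gradients into $|\widehat\theta_n-\widetilde\theta_n|=O(1/n)$, whence the limit law and the a.s.\ limit transfer wholesale from Theorem \ref{theo:2.8}. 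What your route buys is a quantitative proof of the equivalence statement and the need for only one distributional computation; what the paper's route buys is an independent derivation of the MPE asymptotics from the pressure alone. Two points to tighten: the $O(1)$ bounds in \eqref{eq:3.1} and Lemma \ref{lem:3.4} are stated for fixed $t$ and you need them uniformly on a compact neighbourhood of $\theta_0$ (this follows since $\log D(t)=O(\|t\|)$ and both estimators are eventually localized there by consistency), and the stationarity $\nabla_t\log\nu_t([\X_n])|_{\widehat\theta_n}=0$ presupposes that the $\eta_n$-MLE is an interior maximizer for large $n$, which holds a.s.\ by Proposition \ref{prop:4.2}. Finally, note a tension your argument makes visible: a direct expansion of the MPE stationarity equation, as in the paper, involves the deterministic Hessian $\Sigma$ and on its face would produce the limit $\Sigma^{-1}N^t$, whereas your comparison forces $\widetilde\theta_n$ to inherit whatever limit Theorem \ref{theo:2.8} assigns to $\widehat\theta_n$, namely $G^{-1}(N)N^t$; since these laws differ (already for $d=1$), the two derivations cannot both be taken at face value, and your approach has the merit of reducing the question to the single distributional claim of Theorem \ref{theo:2.8} rather than introducing a second, potentially inconsistent computation.
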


\begin{proof} The additional statement follows from Theorem \ref{theo:2.8}, Theorem \ref{theo:2.7} and the first part.

In order to show the first part use Taylor expansion for $\nabla P(F_s)$: 
\begin{eqnarray*}
&& D_i[P(F_s)](\widetilde \theta_n)\\
&& = D_i[P(F_s)](\theta_0)+\sum_{j=1}^d D_j[D_i[P(F_s)(t)](\theta_0)((\widetilde\theta_n)_j-(\theta_0)_j) \\
&&\qquad + \sum_{|\lambda|\ge 2} D_\lambda[D_iP(F_s)(t)](\theta_0)\prod_j ((\widetilde\theta_n)_j-(\theta_0)_j)^{k_j}\\
&&= D_i[P(F_s)](\theta_0)+\sum_{j=1}^d D_j[D_i[P(F_s)(t)](\theta_0)((\widetilde\theta_n)_j-(\theta_0)_j)\\
&& \qquad +O(\|\widetilde\theta_n-\theta_0\|^2)
\end{eqnarray*}
and since $ D_i[P(F_s)](\theta_0)= \int f_i \d\mu_{\theta_0}$
\begin{eqnarray*}
  0&=& S_{\mathbb X_n}f_i- D_i[P(F_s)](\widetilde \theta_n) \\
  &=& S_{\mathbb X_n}f_i -\int f_i \d\mu_{\theta_0} - \sum_{j=1}^d D_j[D_i[P(F_s)(t)](\theta_0)((\widetilde\theta_n)_j-(\theta_0)_j) \\
  && \qquad + O(\|\widetilde\theta_n-\theta_0\|^2).
  \end{eqnarray*}
As in the proof of Theorem \ref{theo:2.8} it then follows that the random vectors
\[ \left(S_{\mathbb X_n}f_i/n -\int f_i \d\mu_{\theta_0}\right)_{1\le i\le d}\] 
and 
\[ G(\widetilde \theta_n-\theta_0)\] 
are asymptotically equivalent (i.\,e.\@ their difference converges to $0$ in probability). Since the convergence rate is $o(\frac{1}{\sqrt{n}})$ and since the matrix $G$ is a.\,s.\@ invertible by Lemma 
\ref{lem:4.3} also $\sqrt{n}(\widetilde \theta_n-\theta_0)$ and 
\[  \sqrt{n} G^{-1}\left(S_{\mathbb X_n}f_i/n -\int f_i \d\mu_{\theta_0}\right)_{1\le i\le d}\] 
have the same limiting distribution.
The second vector clearly converges in distribution to the required law. This finishes the proof.
\end{proof}
%{\scriptsize

%}


\begin{thebibliography}{999}

 \bibitem{ADU} J. Aaronson, M. Denker and M. Urba\'nski, 
 Ergodic theory for Markov fibred systems and parabolic rational maps.
Trans. of the Amer. Math. Soc. {\bf 337}(2), 1993, 495--548.

\bibitem{AK} J.M. Amigo and K. Keller, On entropy, entropy-like quantities, and applications. Discrete and Continuous Dynamical Systems {\bf 20}(10), 2015, 3301--3343.

\bibitem{Bi} P. Billingsley, Convergence of Probability Measures. John Wiley \& Sons, 1999.

\bibitem{Bo} R. Bowen, Equilibrium States and the Ergodic Theory of Anosov Diffeomorphisms.
Lecture Notes in Mathematics 470, Springer 1975.

\bibitem{Br} R. Bradley, Introduction to Strong Mixing Conditions.
Vol. 1--3. Kendrick Press 2007.

\bibitem{Bro} P.J. Brockwell, R.A. Davis,  Time Series: Theory and Methods. Springer Series in Statistics, Springer 1991.

\bibitem{BS} L. Bunimovich and Y. Su: Estimates of constants in the limit theorems for chaotic dynamical systems. Stochastics \& Dynamics {\bf 24}(1), 2024, 2450004, 
{\tt https://doi.org/10.1142/S0219493724500047.}

\bibitem{ChanTong} K-S. Chan,  and H. Tong,
 Chaos: a Statistical Perspective.  {Springer Series in Statistics},  {Springer-Verlag, New York},  {2001}, pp. {xvi+300},
     {\tt https://doi.org/10.1007/978-1-4757-3464-5.}

 \bibitem{Cr} H. Cram\'er, Mathematical Methods of Statistics. Princeton, NJ: Princeton Univ. Press 1946. 

\bibitem{DGS} M. Denker, C. Grillenberger, and K. Sigmund: Ergodic Theory on Compact Spaces. Lecture Notes in Math. {\bf 525}, Springer 1975.

\bibitem{D1} M. Denker, Statistical decision procedures and ergodic theory. Ergodic Theory and Related Topics. Math. Research  {\bf 12}, 1982, 35--47.

\bibitem{DK1} M. Denker and G. Keller, On $U$-statistics and von Mises' statistics for weakly dependent processes. Z. Wahrscheinlichkeitsth. verw. Geb.  {\bf 64}, 1983, 505--522.

\bibitem{DK2} M. Denker and G. Keller, Rigorous statistical procedures for data from dynamical systems.  J. Stat. Physics {\bf 44}, 1986, 67--93.
  
\bibitem{D2} M. Denker, Large deviations and the pressure function. Trans. of the Eleventh Prague Conference on ``Information Theory, Statistical Decision Functions, Random Processes", Prague, 21--33 (1990) Academia Publ. House of Czechoslovak Acad. of Science, 1992.

\bibitem{D3}  M. Denker,  Einf\"uhrung in die Analysis dynamischer Systeme. Springer Verlag 2005.

\bibitem{DG} M. Denker and M. Gordin, Limit theorems for von Mises statistics of a measure preserving transformation.  Probab. Theory Relat. Fields {\bf 160}, 2014, 1--45.

\bibitem{DK01}
M. Denker and M. Kesseb\"{o}hmer,  Thermodynamic formalism, large deviation, and multifractals.
 In: Stochastic Climate Models ({C}horin, 1999), 
   Progr. Probab., {\bf 49},  159--169. Birkh\"{a}user, Basel, 2001.

\bibitem{DLL} M. Denker, A.O. Lopes, and S.R.C. Lopes, Dynamical hypothesis tests and decision theory for Gibbs distributions. Discrete and Continuous Dynamical Systems  {\bf 43}(5), 2023, 1942--1958.

\bibitem{HP} K. Hess and W. Philipp: Der Satz von Bell und das Konsistenzproblem gemeinsamer Wahrscheinlichkeitsverteilungen: eine mathemati-sche Perspektive der Einstein-Bohr Debatten
(Bell's theorem and the consistency problem of common probability distributions: a mathematical perspective on the Einstein-Bohr debate).
Math. Semesterber. {\bf 53}(2), 2006, 152--183.

\bibitem{Chu} C. Ji, Estimating functionals of one-dimensional Gibbs states. Probab. Th. Rel. Fields {\bf 82}, 1989, 155--175.

\bibitem{KH} A. Katok and B. Hasselblatt, Introduction to the Modern Theory of Dynamical Systems. Cambridge Univ. Press 1995.

\bibitem{Lal1} Lalley, S.P., Distribution of periodic orbits of symbolic and axiom A flows. Adv. Appl. Math. {\bf 8}, 1987, 154--193.

\bibitem{Lal2} S. Lalley, Ruelle's Perron-Frobenius theorem and the central limit theorem for additive functionals of one-dimensional Gibbs states.  Adaptive statistical procedures and related topics (Upton, N.Y., 1985), 428--446, IMS Lecture Notes Monogr. Ser., 8, Inst. Math. Statist., Hayward, CA 1986.

\bibitem{LN}  A. Leucht and M.H. Neumann, Degenerate U- and V-statistics under ergodicity: asymptotics, bootstrap
 and applications in statistics. Ann. Inst. Stat. Math. {\bf 65}, 2013, 349--386.
 
\bibitem{LV} A. O. Lopes, S. R. C. Lopes and P. Varandas,
Bayes posterior convergence for loss functions via almost additive thermodynamic formalism.
Journ. of Statist. Physics, 186:35, 2022.

\bibitem{L3} A. O. Lopes, Entropy and Large Deviation.
\emph{Nonlinearity}, {bf 3}(2), 1990,
527--546.


\bibitem{Lo1} A. O.  Lopes,
Thermodynamic Formalism, Maximizing Probabilities and Large Deviations.  Notes online.

\bibitem{LoMe} A. O. Lopes and J. Mengue,
The generalized IFS Bayesian method and an associated variational principle covering the classical and dynamical cases.
Dyn. Systems, {\bf 39}(2), 2024, 206-230.

\bibitem{LR1} A. O. Lopes and R. Ruggiero, The sectional curvature of the infinite dimensional manifold
of Holder equilibrium probabilities. Preprint arXiv:1811.07748.

\bibitem{CM24} Chr. Moor, Recurrence and transience for extensions of graph directed Markov systems on $\R^{d}$. PhD Thesis University of Bremen, 2024.

\bibitem{MMP} K. McGoff, S, Mukherjee and N.S. Pillai, Statistical inference for
dynamical systems: A review. ArXiv 1204.6265v3, 2012.

\bibitem{MMN}
 K. McGoff, S. Mukherjee and A. Nobel, Gibbs posterior convergence and thermodynamic formalism. Annals
of Applied Probability, {\bf 32}(1), 2022, 461--496.

\bibitem{N} Y. Nakahiro,
Quasi-likelihood analysis and its applications.
Stat. Inference Stoch. Process. {\bf 25}(1), 2022, 43--60.

\bibitem{PP}
W. Parry and M.  Pollicott, Zeta functions and the periodic
orbit structure of hyperbolic dynamics. Ast\'erisque,
{\bf 187--188}, 1990.

\bibitem{PS} W. Philipp and W. Stout, Almost Sure Invariance Principles for Partial Sums of Weakly Dependent Random Variables. Memoirs of the Amer. Math. Soc. {\bf 2}, 1975.

\bibitem{PU} F. Przytycki and M. Urba\'nski, Conformal Fractals: Ergodic Theory Methods.  London Mathematical Society Lecture Note Series {\bf 371}, 2010.


\bibitem{Rao}  C.R. Rao, Information and the accuracy attainable in the estimation of statistical parameters. Bulletin of the Calcutta Mathematical Society. {\bf 37}, 1945, 81--89.

\bibitem{Ru}
 D. Ruelle, Thermodynamic Formalism. Cambridge University Press, 
 2nd edition, 2004.

\end{thebibliography}
\end{document}